\newtheorem{theorem}{Theorem}[section]
\newtheorem{definition}[theorem]{Definition}
\newtheorem{lemma}[theorem]{Lemma}
\newtheorem{proposition}[theorem]{Proposition}
\newtheorem{corollary}[theorem]{Corollary}
\newtheorem{remark}[theorem]{Remark}
\newtheorem{example}[theorem]{Example}
\numberwithin{equation}{section}
\newcommand{\e}{\varepsilon}
\newcommand{\BB}{{\mathbb B}}
\newcommand{\HH}{{\mathcal H}}
\newcommand{\K}{{\mathcal K}}
\newcommand{\LL}{{\mathcal L}}
\newcommand{\M}{{\mathcal M}}
\newcommand{\C}{\mathcal C}
\newcommand{\F}{\mathbb F}
\newcommand{\E}{\mathbb E}
\newcommand{\id}{\operatorname{id}}
\newcommand{\md}{\operatorname{md}}
\newcommand{\cb}{\operatorname{cb}}
\newcommand{\op}{\operatorname{op}}
\newcommand{\ev}{\operatorname{ev_1}}
\newcommand{\CE}{C^*\langle E\rangle}
\newcommand{\CF}{C^*\F_\infty}
\newcommand{\OS}{\mathcal{OS}_n}
\begin{document}

\title[]{Operator-valued Kirchberg Theory}

\author[]{Jian Liang and Sepideh Rezvani}

\address{Department of Mathematics, University of Illinois, Urbana,
IL 61801, U.S.A.} \email{liang41\@@illinois.edu,rezvani2\@@illinois.edu}
\date{\today}

\keywords{QWEP, Hilbert C$^*$-module.}
\subjclass[2010]{46L06, 46L08, 46L10, 46L07.}

\begin{abstract}
In this paper, we will follow Kirchberg's categorical perspective to establish new notions of WEP and QWEP relative to 
a C$^*$-algebra, and 
develop similar properties as in the classical WEP and QWEP. Also we will show some examples of relative 
WEP and QWEP to illustrate the relations with the classical cases. Finally we will apply our notions to recent results 
on C$^*$-norms.   
\end{abstract}

\maketitle

\section{Introduction}

In this paper, we investigate a new notion of operator-valued WEP and QWEP. Let us recall that 
the weak expectation property (abbreviated as WEP) was introduced by E. Christopher Lance
in his paper \cite{La2} of 1973, as a generalization of nuclearity of C$^*$-algebras. 
In 1993, Eberhard Kirchberg \cite{Ki1} revealed
remarkable connections between tensor products of C$^*$-algebras and Lance's weak expectation property. 
He defined the notion of QWEP as a quotient of a C$^*$-algebra with
the WEP, and formulated the famous QWEP conjecture that all C$^*$-algebras are QWEP. He showed a
vast amount of equivalences between various open problems in operator algebras. In particular, 
he showed that the QWEP conjecture is
equivalent to an affirmative answer to the Connes' Embedding Problem.

The motivation of our research is to generalize the notion of WEP and QWEP in the setting of Hilbert C$^*$-
modules, in which the inner product of a
Hilbert space is replaced by a C$^*$-valued inner product. Hilbert C$^*$-modules were first
introduced in the work of Irving Kaplansky in 1953 \cite{Kap53}, which developed the theory for
commutative, unital algebras. In the 1970s the theory was extended to noncommutative C$^*$-
algebras independently by William Lindall Paschke \cite{Pas73} and Marc Rieffel\cite{Rie74}. The latter 
used Hilbert C$^*$-modules to construct a theory of induced representations of C$^*$-algebras. 
Hilbert C$^*$-modules are crucial to Kasparov's formulation of KK-theory \cite{Kas80}, and
provide the right framework to extend the notion of Morita equivalence to C*-algebras \cite{Rie82}.
They can be viewed as the generalization of vector bundles to noncommutative C*-algebras and as
such play an important role in noncommutative geometry, notably in C$^*$-algebraic quantum group
theory and groupoid C$^*$-algebras.

Another motivation of our research is from the relation with amenable correspondence. The
notion of correspondence of two von Neumann algebras has been introduced by Alain Connes
\cite{CJ}, as a very useful tool for the study of type II$_1$ factors. Later Sorin Popa systematically developed this point of view to get some new insight in this area \cite{Pop}. Among many
interesting results and remarks, he discussed Connes' classical work on the injective II$_1$ factor in
the framework of correspondences, and he defined and studied a natural notion of amenability for
a finite von Neumann algebra relative to a von Neumann subalgebra using conditional expectations. As Lance was 
inspired by Tomiyama's work on conditional expectations, we are interested in weak conditional 
expectations relative to a C$^*$-algebra.

The main results of this paper are inspired by Kirchberg's seminal work on non-semisplit extensions. 
In Section 3, we define two notions of WEP relative to a C$^*$-algebra $D$. Let $E_D$ be a Hilbert $D$-module, 
and $\mathcal{L}(E_D)$ be the C$^*$-algebra of bounded adjointable linear operators on $E_D$. Also let $E_{D^{**}}$ be the weakly closed 
Hilbert $D^{**}$-module, and $\mathcal{L}^w(E_{D^{**}})$ be the von Neumann algebra of bounded adjointable linear operators on $E_{D^{**}}$. 
We say that a C$^*$-algebra $A$ has the $D$WEP$_1$ if it is relatively weakly injective in $\mathcal{L}(E_D)$, 
i.e. for a faithful representation $A \subset \mathcal{L}(E_D)$, there exists a ucp map $\mathcal{L}(E_D) \to A^{**}$, 
which preserves the identity on $A$. Respectively we define the $D$WEP$_2$ to be the relatively weak injectivity in 
$\mathcal{L}(E_{D^{**}})$. We show that $D$WEP$_1$ implies $D$WEP$_2$, but the converse is not true. 
After investigating some basic properties, we establish a tensor product characterization of $D$WEP. Let 
$max^D_1$ be the tensor norm on $A\otimes C^*\F_{\infty}$ induced from the inclusion $A\otimes C^*\F_{\infty} \subseteq \mathcal{L}(E^u_D)\otimes_{max}C^*\F_{\infty}$ 
for some universal Hilbert $D$-module $E^u_D$ and $A\subset \mathcal{L}(E^u_D) $. Then a
 C$^*$-algebra $A$ has the $D$WEP$_1$, if and only if
\[
 A\underset{max^D_1}\otimes C^*\F_{\infty} = A \underset{max}\otimes C^*\F_{\infty}.
\] 
We have the similar result for $D$WEP$_2$ with respect to some universal weakly closed $D^{**}$-module $E^u_{D^{**}}$. 

In Section 4, we define two notions of relative QWEP, derived from relative WEP. After developing basic properties of relative QWEP, we show that the two notions are equivalent, unlike the case in the relative 
WEP. Similarly, we establish a tensor product characterization of relative QWEP. In Section 5, we investigate some properties of WEP and QWEP relative to some special classes of C$^*$-algebras, and illustrate the 
relations with classical results in the WEP and QWEP theory. 

In Section 6, we discuss some application of our tensor product characterization result for $D$WEP from Section 3 in the setting 
of C$^*$-norms. As we know, the algebraic tensor product $A\otimes B$ of two C$^*$-algebras may admit distinct norms, for 
instance, the minimal and maximal norms. A C$^*$-algeba $A$ such that $\|\cdot\|_{\min}=\|\cdot\|_{\max}$ on $A\otimes B$ for any other 
C$^*$-algebra $B$ is called nuclear. In particular, Simon Wassermann \cite{Was76} shows that $\BB(\HH)$ is not nuclear, and 
later Gilles Pisier and Marius Junge \cite{JP95} show that $\|\cdot\|_{\min}\neq\|\cdot\|_{\max}$ on $\BB(\HH)\otimes \BB(\HH)$. Recently, 
Pisier and Ozawa \cite{OP14} showed that there is at least a continuum of different C$^*$-norms on 
$\BB(\ell_2)\otimes \BB(\ell_2)$. In Section 6, we adopt the adea in their paper to construct a new C$^*$-norm 
on $A\otimes B$ by using the notion of $D$WEP and the $max^D_1$ norm which we constructed in Section 3, and provide the conditions which make it
 neither min nor max norm, and distinct from the continuum norms constructed by Pisier and Ozawa. 
We also give a concrete example, satisfying the conditions and hence with four distinct tensor norms. 
These conditions will give us a new way to distinguish norms on C$^*$-algebras. 

We would like to thank Marius Junge, for having extensive inspiring discussions on this topic.

\section{Preliminaries}

\subsection{WEP and QWEP}

The notion of WEP is from Lance \cite{La2}, and it is inspired by Tomiyama's extensive work on conditional expectations. 
Kirchberg in \cite{Ki1} raises the famous QWEP conjecture and establishes its several equivalences. 
Here we list some useful results for readers' convenience. Most of the results and proofs 
can be found in Ozawa's survey paper \cite{Oz}. 

\begin{definition}
 Let $A$ be a unital C$^*$-subalgebra of a unital C$^*$-algebra $B$. We say $A$ is
 relatively weakly injective (short as r.w.i.) in $B$, if there 
is a ucp map $\varphi: B \to A^{**}$ such that $\varphi|_A = \id_{A}$.
\end{definition}

For von Neumann algebras $M\subset N$, the relative weak injectivity is equivalent to the existence of 
a (non-normal) conditional expectation from $N$ to $M$.

 We say a C$^*$-algebra $A$ has the weak expectation property (short as WEP), 
if it is relatively weakly injective in $\mathbb{B}(\mathcal{H})$ for a faithful representation 
$A\subset \BB(\HH)$.

Since $\BB(\HH)$ is injective, the notion of WEP does not depend on the choice of a faithful representation of $A$. 
We say a C$^*$-algebra is QWEP if it is a quotient of a C$^*$-algebra with the WEP. The QWEP conjecture raised by Kirchberg 
in \cite{Ki1} states that all C$^*$-algebras are QWEP. 

From the definition of \emph{r.w.i.}, it is easy to see the following transitivity property.

\begin{lemma}
\label{wri wri}
 For C$^*$-algebras $A_0 \subseteq A_1 \subseteq A$, such that $A_0$ is relatively weakly injective
 in $A_1$, $A_1$ is relatively weakly injective in $A$,
then $A_0$ is relatively weakly injective in $A$.
\end{lemma}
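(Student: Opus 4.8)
The plan is to compose the two given relative weak expectations after lifting everything to the bidual level, where a canonical normal projection lets me land back in $A_0^{**}$. By hypothesis I have ucp maps $\varphi_1 \colon A_1 \to A_0^{**}$ with $\varphi_1|_{A_0} = \id_{A_0}$ and $\varphi \colon A \to A_1^{**}$ with $\varphi|_{A_1} = \id_{A_1}$. The difficulty is purely one of matching domains and codomains: the range of $\varphi$ is $A_1^{**}$, not $A_1$, so $\varphi_1$ cannot be applied to its output directly.

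To fix this I would pass to the second adjoint $\varphi_1^{**} \colon A_1^{**} \to (A_0^{**})^{**}$. Being a bidual of a bounded linear map, $\varphi_1^{**}$ is weak$^*$-continuous (normal); being the bidual of a ucp map, it is again ucp; and it extends $\varphi_1$ in the sense that $\varphi_1^{**}|_{A_1} = \varphi_1$ under the canonical embeddings. I then invoke the standard fact that for any von Neumann algebra $M$ there is a canonical normal ucp projection $P_M \colon M^{**} \to M$ restricting to $\id_M$ on $M$; concretely, $P_M$ is the adjoint of the inclusion of the predual $M_* \hookrightarrow M^* = (M_*)^{**}$, which is normal and fixes $M$ pointwise. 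Applying this with $M = A_0^{**}$ furnishes $P \colon (A_0^{**})^{**} \to A_0^{**}$.

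With these pieces in hand I would set
\[
 \psi \;=\; P \circ \varphi_1^{**} \circ \varphi \colon A \longrightarrow A_0^{**}.
\]
As a composition of ucp maps, $\psi$ is ucp. To verify $\psi|_{A_0} = \id_{A_0}$, I trace an element $a \in A_0 \subseteq A_1 \subseteq A$ through the three maps: $\varphi(a) = a$ since $\varphi|_{A_1} = \id_{A_1}$; then $\varphi_1^{**}(a) = \varphi_1(a) = a$, viewed in $(A_0^{**})^{**}$ via the canonical embedding, since $a \in A_0$ and $\varphi_1|_{A_0} = \id_{A_0}$; and finally $P(a) = a$ because $P$ fixes $A_0^{**}$ pointwise. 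Hence $\psi(a) = a$, which shows that $A_0$ is relatively weakly injective in $A$.

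The only genuinely technical ingredient is the existence of the canonical normal projection $P_M \colon M^{**} \to M$, so this is where I expect the main obstacle to lie; everything else is a formal check that biduals preserve the ucp property and intertwine the canonical embeddings. Once $P_M$ is available, the transitivity follows at once from the composition above.
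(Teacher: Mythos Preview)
Your argument is correct and is precisely the standard verification the paper has in mind when it says this ``is easy to see from the definition'' and omits the proof. One minor simplification: rather than passing through $(A_0^{**})^{**}$ and then projecting back via $P$, you can obtain the normal extension $\widetilde{\varphi_1}\colon A_1^{**}\to A_0^{**}$ in one step as the adjoint of $\varphi_1^*|_{A_0^*}\colon A_0^*\to A_1^*$; this is exactly your composite $P\circ\varphi_1^{**}$, so the content is identical.
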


The property of \emph{r.w.i.} is also closed under direct product. 
\begin{lemma}
 \label{prod wri}
If $(A_i)_{i\in I}$ is a net of $C^*$-algebras such that $A_i$ is relatively weakly injective in $B_i$ for all $i\in I$, then 
$\Pi_{i\in I}A_i$ is relatively weakly injective in $\Pi_{i\in I}B_i$.
\end{lemma}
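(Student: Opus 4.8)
The plan is to assemble the coordinatewise maps into a single map and then correct for the difference between the product of the biduals and the bidual of the product. For each $i$, relative weak injectivity of $A_i$ in $B_i$ gives a ucp map $\varphi_i\colon B_i\to A_i^{**}$ with $\varphi_i|_{A_i}=\id_{A_i}$. First I would form the product map $\Phi_0:=\prod_{i\in I}\varphi_i\colon \prod_{i\in I}B_i\to\prod_{i\in I}A_i^{**}$, $(b_i)_i\mapsto(\varphi_i(b_i))_i$. This is ucp, since unitality and complete positivity of a map into an $\ell^\infty$-product are verified coordinatewise while the norm is the supremum norm; and by construction $\Phi_0$ restricts on $\prod_{i\in I}A_i$ to the canonical embedding $\prod_{i\in I}A_i\hookrightarrow\prod_{i\in I}A_i^{**}$.

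The one genuine gap is that $\Phi_0$ takes values in $\prod_{i\in I}A_i^{**}$ rather than in $\bigl(\prod_{i\in I}A_i\bigr)^{**}$, and these algebras do not coincide. The bridge I would use is the identification $\bigl(\bigoplus_{i\in I}A_i\bigr)^{**}=\prod_{i\in I}A_i^{**}$ together with the fact that $\bigoplus_{i\in I}A_i$ is a closed two-sided ideal of $\prod_{i\in I}A_i$. Hence there is a central projection $z\in\bigl(\prod_{i\in I}A_i\bigr)^{**}$ with $z\bigl(\prod_{i\in I}A_i\bigr)^{**}\cong\prod_{i\in I}A_i^{**}$ and $(1-z)\bigl(\prod_{i\in I}A_i\bigr)^{**}\cong\bigl(\prod_{i\in I}A_i/\bigoplus_{i\in I}A_i\bigr)^{**}$. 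Under this identification $\Phi_0$ already supplies the $z$-component correctly, as $\Phi_0(a)=za$ for $a\in\prod_{i\in I}A_i$. It therefore suffices to produce a ucp map $\theta\colon\prod_{i\in I}A_i^{**}\to\bigl(\prod_{i\in I}A_i\bigr)^{**}$ fixing $\prod_{i\in I}A_i$, and then take $\Phi:=\theta\circ\Phi_0$. Equivalently, the whole statement reduces to the special case $B_i=A_i^{**}$, that is, to showing that $\prod_{i\in I}A_i$ is relatively weakly injective in $\prod_{i\in I}A_i^{**}$.

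To build $\theta$ I would let it be the canonical $*$-isomorphism onto $z\bigl(\prod A_i\bigr)^{**}$ on the $z$-summand, and on the complementary summand extend the composite $\prod_{i\in I}A_i\twoheadrightarrow\prod_{i\in I}A_i/\bigoplus_{i\in I}A_i\hookrightarrow(1-z)\bigl(\prod_{i\in I}A_i\bigr)^{**}$ to a unital ucp map on all of $\prod_{i\in I}A_i^{**}$; the two pieces then combine into a ucp map because they land in complementary central summands and $\theta(1)=z+(1-z)=1$. Concretely I would try to define the pairing of $\Phi(b)$ with functionals in $\bigl(\prod_{i\in I}A_i\bigr)^{*}$ in two regimes: against functionals $(\omega_i)_i\in\bigoplus_{i\in I}A_i^{*}$ supported on the ideal, by $\sum_i\langle\varphi_i(b_i),\omega_i\rangle$, and against a corona functional $\omega$ represented as an ultrafilter limit $\lim_{\mathcal U}\rho_i$, by $\lim_{\mathcal U}\langle\varphi_i(b_i),\rho_i\rangle$; both formulas are forced to reproduce $\omega(a)$ when $b=a\in\prod A_i$.

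I expect the main obstacle to be exactly this corona part. Because the bidual of an $\ell^\infty$-product strictly contains the product of the biduals, the coordinatewise data $(\varphi_i)$ control behaviour only modulo $\bigoplus_{i\in I}A_i$, so nothing in the hypothesis directly pins down $\Phi$ on the quotient $\prod_{i\in I}A_i/\bigoplus_{i\in I}A_i$. The delicate point is to promote a coherent family of Hahn--Banach extensions of corona states into a single completely positive, unital, and well-defined map on $\prod_{i\in I}A_i^{**}$; the limiting construction above must be shown to be independent of the chosen representations and to respect complete positivity and unitality simultaneously. If this turns out to be awkward, the alternative I would pursue is to bypass $\theta$ entirely and instead verify the tensor-norm characterization of relative weak injectivity for $\prod_{i\in I}A_i\subseteq\prod_{i\in I}B_i$, where the compatibility of the maximal tensor norm with $\ell^\infty$-products becomes the technical heart of the argument.
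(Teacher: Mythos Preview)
The paper does not actually supply a proof of this lemma; it is listed among the preliminary facts in Section~2.1 with a blanket reference to Ozawa's survey~\cite{Oz}. So there is no argument in the paper to compare yours against.

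That said, your write-up has a real gap, and you have located it yourself. Forming $\Phi_0=\prod_i\varphi_i\colon\prod_iB_i\to\prod_iA_i^{**}$ is fine, and your reduction ``it suffices to show $\prod_iA_i$ is relatively weakly injective in $\prod_iA_i^{**}$'' is logically correct---but it is also circular: that is precisely the statement of the lemma with $B_i=A_i^{**}$, so nothing has been gained. Your attempt to break the circle by writing $(\prod_iA_i)^{**}=z(\prod_iA_i)^{**}\oplus(1-z)(\prod_iA_i)^{**}$ and handling the two pieces separately runs into the following problem on the corona summand. You want to extend the $*$-homomorphism $\prod_iA_i\twoheadrightarrow\prod_iA_i/\bigoplus_iA_i\hookrightarrow(1-z)(\prod_iA_i)^{**}$ to a ucp map on all of $\prod_iA_i^{**}$. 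But Arveson's theorem only guarantees such an extension when the \emph{target} is injective, and $(1-z)(\prod_iA_i)^{**}$ has no reason to be. The ``natural'' candidate---pass to $\prod_iA_i^{**}/\bigoplus_iA_i^{**}$ and then map into $(\prod_iA_i/\bigoplus_iA_i)^{**}$---just reproduces the original problem one level down: one again needs $\prod_iA_i/\bigoplus_iA_i$ to be relatively weakly injective in $\prod_iA_i^{**}/\bigoplus_iA_i^{**}$, which is the same shape of statement you started with. The ultrafilter formula you sketch for pairing $\Phi(b)$ against a corona functional is not well defined as it stands (it depends on the chosen representatives $\rho_i$), and promoting a family of Hahn--Banach state extensions to a single ucp map is exactly the nontrivial step that would need justification.

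Your fallback suggestion, the tensor-norm characterization, is also more delicate than it looks. One would like to use the coordinatewise isometries $A_i\otimes_{\max}C\hookrightarrow B_i\otimes_{\max}C$ by passing through $\prod_i(A_i\otimes_{\max}C)$, but the natural map $(\prod_iA_i)\otimes_{\max}C\to\prod_i(A_i\otimes_{\max}C)$ is \emph{not} isometric in general (take $A_i=M_i$ and any $C$ witnessing that $\prod_iM_i$ is non-nuclear: then $\sup_i\|x_i\|_{M_i\otimes_{\max}C}=\sup_i\|x_i\|_{M_i\otimes_{\min}C}=\|x\|_{\min}<\|x\|_{\max}$). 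So the ``compatibility of the maximal tensor norm with $\ell^\infty$-products'' you allude to fails, and this route too needs a different idea to close.
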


In \cite{La2}, Lance establishes the following tensor product characterization of the WEP. The proof of the theorem is 
called \emph{The Trick}, and we will be using this throughout the paper. In the following, 
let $\mathbb{F}_{\infty}$ denote the free group 
with countably infinite generators, and $C^*\mathbb{F}_{\infty}$ be the full group C$^*$-algebra of $\mathbb{F}_{\infty}$. 

\begin{theorem}
\label{tensor wri}
A C$^*$-algebra $A$ has the WEP, if and only if 
\[
 A \underset{max}\otimes C^*\mathbb{F}_\infty = A \underset{min}\otimes C^*\mathbb{F}_\infty.
\]
\end{theorem}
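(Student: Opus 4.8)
The plan is to deduce both implications from a single general principle relating relative weak injectivity to the behaviour of the maximal tensor norm under inclusions, and then to feed in the special properties of $\CF$ and of $\BB(\HH)$. First I would isolate the following general lemma: if $A$ is relatively weakly injective in $B$, then for every C$^*$-algebra $C$ the canonical $*$-homomorphism $\iota\colon A\otimes_{\max}C\to B\otimes_{\max}C$ is isometric. To see this, let $\varphi\colon B\to A^{**}$ be the ucp map with $\varphi|_A=\id_A$ supplied by the definition of r.w.i. Since tensoring a ucp map with the identity is contractive for the maximal norm, $\varphi\otimes\id_C\colon B\otimes_{\max}C\to A^{**}\otimes_{\max}C$ is contractive; composing $\iota$ with $\varphi\otimes\id_C$ gives, on elementary tensors, the canonical embedding $A\otimes_{\max}C\hookrightarrow A^{**}\otimes_{\max}C$, because $\varphi$ fixes $A$. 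As the latter embedding is isometric, and $\varphi\otimes\id_C$ is contractive, $\iota$ must itself be isometric.

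With this in hand, the forward implication proceeds as follows. Assume $A$ has the WEP and fix $A\subseteq\BB(\HH)$, so that $A$ is r.w.i. in $\BB(\HH)$. The lemma gives an isometric inclusion $A\otimes_{\max}\CF\hookrightarrow\BB(\HH)\otimes_{\max}\CF$. On the other hand, injectivity of the minimal norm yields an isometric inclusion $A\otimes_{\min}\CF\hookrightarrow\BB(\HH)\otimes_{\min}\CF$. The bridge between the two is the key fact that $\BB(\HH)\otimes_{\min}\CF=\BB(\HH)\otimes_{\max}\CF$, which holds because $\BB(\HH)$ is injective and $\CF$ has the (local) lifting property. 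A diagram chase through the resulting commuting square then forces $\|x\|_{\max}=\|x\|_{\min}$ for every $x\in A\otimes\CF$.

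For the converse I would run the square in reverse. Assuming $A\otimes_{\max}\CF=A\otimes_{\min}\CF$, I combine this with $A\otimes_{\min}\CF\hookrightarrow\BB(\HH)\otimes_{\min}\CF=\BB(\HH)\otimes_{\max}\CF$ to conclude that $A\otimes_{\max}\CF\hookrightarrow\BB(\HH)\otimes_{\max}\CF$ is isometric. It then remains to reverse the general lemma: an isometric maximal inclusion $A\otimes_{\max}\CF\hookrightarrow\BB(\HH)\otimes_{\max}\CF$ should produce the desired weak expectation $\BB(\HH)\to A^{**}$. Here one exploits the lifting property of $\CF$: lift the representation coming from the isometric inclusion, apply Arveson's extension theorem, and extract a ucp map $\varphi\colon\BB(\HH)\to A^{**}$ with $\varphi|_A=\id_A$ as a suitable point-ultraweak limit.

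I expect the genuine obstacle to lie entirely in the two places where the special nature of $\CF$ enters, namely the identity $\BB(\HH)\otimes_{\min}\CF=\BB(\HH)\otimes_{\max}\CF$ and the reversal of the general lemma in the converse. Both encode the lifting property of the free group C$^*$-algebra, and the careful lifting-and-limit construction of the weak expectation is the technical heart of ``The Trick''; the remaining steps are formal consequences of the functoriality of $\otimes_{\max}$, the injectivity of $\otimes_{\min}$, and the isometric embedding $A\otimes_{\max}C\hookrightarrow A^{**}\otimes_{\max}C$.
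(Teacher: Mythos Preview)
Your forward implication is fine and is the standard modern route: relative weak injectivity gives an isometric max-inclusion, Kirchberg's identity $\BB(\HH)\otimes_{\min}\CF=\BB(\HH)\otimes_{\max}\CF$ bridges to the min side, and injectivity of $\otimes_{\min}$ closes the square.

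The converse, however, is where your sketch drifts from what the paper calls \emph{The Trick}, and the mechanism you propose (``lift the representation \ldots\ extract $\varphi$ as a point-ultraweak limit'') is not how it actually goes. No lifting and no limit are needed. The argument, as the paper carries it out explicitly in the proof of Theorem~\ref{tensor wep}, is a direct commutant computation: take the universal representation $A\subseteq\BB(\HH)$ so that $A''=A^{**}$, and choose a $*$-representation $\pi\colon\CF\to\BB(\HH)$ with $\pi(\CF)''=A'$ (possible because $\CF$ surjects onto any separable C$^*$-algebra, hence onto a weakly dense subalgebra of $A'$). The pair $(\id_A,\pi)$ has commuting ranges, so it yields a $*$-homomorphism $A\otimes_{\max}\CF\to\BB(\HH)$; by hypothesis this is continuous for $\otimes_{\min}$, and Arveson extends it to $\Psi\colon\BB(\HH)\otimes_{\min}\CF\to\BB(\HH)$. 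Now set $\psi(T)=\Psi(T\otimes 1)$. The point is that $1\otimes\CF$ lies in the multiplicative domain of $\Psi$, so $\Psi$ is a $\CF$-bimodule map and $\psi(T)$ commutes with $\pi(\CF)$; hence $\psi(T)\in\pi(\CF)'=A''=A^{**}$, and $\psi|_A=\id_A$ by construction.

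So the ``special nature of $\CF$'' you correctly flag enters in the converse not through the lifting property but through the much softer fact that $\CF$ is residually universal enough to generate $A'$; the real engine is the multiplicative-domain/bimodule argument that forces the Arveson extension to land in the commutant. Your outline would benefit from replacing the lifting-plus-limit paragraph with this commutant step.
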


As a consequence of the above theorem, we have the following result. 

\begin{corollary}
\label{wepwri}
A C$^*$-algebra $A$ has the WEP if and only if for any inclusion $A\subseteq B$, $A$ is relatively weakly injective in $B$. 
\end{corollary}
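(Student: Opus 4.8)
The plan is to prove the two implications separately, with the forward direction being essentially immediate and the reverse direction carrying the content.

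For the easy implication, suppose that $A$ is relatively weakly injective in every C$^*$-algebra containing it. Fixing a faithful representation $A \subseteq \BB(\HH)$ and specializing the hypothesis to $B = \BB(\HH)$, we obtain a ucp map $\BB(\HH) \to A^{**}$ restricting to $\id_A$, which is precisely the statement that $A$ has the WEP.

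For the converse, suppose $A$ has the WEP, witnessed by a faithful representation $A \subseteq \BB(\HH_0)$ together with a ucp map $\Phi_0 : \BB(\HH_0) \to A^{**}$ with $\Phi_0|_A = \id_A$. Let $A \subseteq B$ be an arbitrary unital inclusion. The key point is that $\BB(\HH_0)$ is injective: the inclusion $\iota : A \hookrightarrow \BB(\HH_0)$ is a unital $*$-homomorphism, hence a ucp map on the subalgebra $A \subseteq B$, so by injectivity (Arveson extension) it extends to a ucp map $\Psi : B \to \BB(\HH_0)$ with $\Psi|_A = \iota$. Composing, $\varphi := \Phi_0 \circ \Psi : B \to A^{**}$ is ucp as a composition of ucp maps and satisfies $\varphi|_A = \Phi_0 \circ \iota = \id_A$. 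Hence $A$ is relatively weakly injective in $B$.

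Alternatively, and in the spirit of the tensor-product characterization in Theorem~\ref{tensor wri} that we will want to imitate in the relative setting, one can argue at the level of norms. Realizing $B \subseteq \BB(\HH)$ and comparing norms on $A \otimes \CF$ sitting inside $B \otimes \CF$, injectivity of the minimal norm gives $\|x\|_{A,\min} = \|x\|_{B,\min}$, the inequality $\|x\|_{B,\min} \le \|x\|_{B,\max}$ holds in $B \otimes \CF$, functoriality of the maximal norm under the $*$-homomorphism $A \hookrightarrow B$ gives $\|x\|_{B,\max} \le \|x\|_{A,\max}$, and Theorem~\ref{tensor wri} gives $\|x\|_{A,\max} = \|x\|_{A,\min}$. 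Chaining these forces equality throughout, so $A \underset{max}{\otimes} \CF \hookrightarrow B \underset{max}{\otimes} \CF$ isometrically, from which The Trick produces the desired weak expectation $B \to A^{**}$. The only real subtlety I would flag is the use of injectivity of $\BB(\HH_0)$ (equivalently, the representation-independence of the WEP) to extend the identity inclusion of $A$ across $B$; this is exactly the step with no automatic analogue once $\BB(\HH)$ is replaced by $\mathcal{L}(E_D)$, which need not be injective, so in the relative theory it is the tensor-norm route rather than the injectivity argument that should be expected to survive.
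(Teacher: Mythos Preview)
Your proof is correct. The paper states this result as an immediate corollary of Theorem~\ref{tensor wri} without writing out the argument, so your second route---chaining the norm inequalities to get $A \otimes_{max} \CF \hookrightarrow B \otimes_{max} \CF$ isometrically and then invoking \emph{The Trick}---is exactly the derivation the paper has in mind. Your first route via Arveson extension through $\BB(\HH_0)$ is a valid and more direct alternative that bypasses the tensor characterization entirely; as you correctly observe, it is precisely this shortcut that fails in the module setting because $\mathcal{L}(E_D)$ is not injective, which is why the paper develops the $max^D$-norm machinery to carry the tensor-product argument over.
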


Similar to the WEP, the QWEP is also preserved by the relatively weak injectivity as following. 

\begin{lemma}
 If a C$^*$-algebra $A$ is relatively weakly injective in a QWEP C$^*$-algebra, then it is QWEP.
\end{lemma}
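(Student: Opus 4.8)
The plan is to pass to biduals and reduce the question to the von Neumann algebra setting, where relative weak injectivity is governed by conditional expectations. Write $A \subseteq B$ with $B$ QWEP, and let $\varphi \colon B \to A^{**}$ be the ucp map with $\varphi|_A = \id_A$ witnessing that $A$ is relatively weakly injective in $B$. The first step is to upgrade $\varphi$ to a \emph{normal} map at the level of biduals. Taking the second adjoint gives a weak$^*$-continuous ucp map $\varphi^{**} \colon B^{**} \to (A^{**})^{**}$, and composing it with the canonical normal $*$-homomorphism $\pi \colon (A^{**})^{**} \to A^{**}$ that splits the inclusion $A^{**} \hookrightarrow (A^{**})^{**}$ (this exists precisely because $A^{**}$ is a von Neumann algebra) produces a normal ucp map
\[
\Phi := \pi \circ \varphi^{**} \colon B^{**} \longrightarrow A^{**}.
\]
Writing $\iota \colon A \hookrightarrow B$, the relation $\varphi \circ \iota = j_A$ (the canonical embedding $A \to A^{**}$) passes to biduals and, together with $\pi \circ (j_A)^{**} = \id_{A^{**}}$, shows $\Phi|_{A^{**}} = \id_{A^{**}}$. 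Since $A^{**} \subseteq B^{**}$ is a von Neumann subalgebra, $\Phi$ is an idempotent normal ucp map onto $A^{**}$, hence a normal conditional expectation by Tomiyama's theorem.

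Next I would invoke the standard stability properties of QWEP for von Neumann algebras, due to Kirchberg and recorded in \cite{Oz}. First, a C$^*$-algebra is QWEP if and only if its bidual is QWEP; applying this to $B$ shows that the von Neumann algebra $B^{**}$ is QWEP. Second, the range of a normal conditional expectation defined on a QWEP von Neumann algebra is again QWEP. Applying the latter to $\Phi \colon B^{**} \to A^{**}$ yields that $A^{**}$ is QWEP, and applying the bidual criterion once more, now to $A$, gives that $A$ itself is QWEP, as desired.

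The main obstacle is not the formal diagram chase but the two von Neumann-algebraic inputs: that QWEP passes between a C$^*$-algebra and its bidual, and that it descends to the range of a normal conditional expectation. The first rests on the preservation of WEP under biduals together with a normal lifting of the defining quotient map, while the second is where the von Neumann structure is genuinely used, since for von Neumann algebras $M \subseteq N$ relative weak injectivity is exactly the existence of a conditional expectation $N \to M$. This is also the only point at which the possible non-normality of the original $\varphi$ is absorbed, via the normalization $\pi \circ \varphi^{**}$. I would therefore isolate these two facts as the results to cite, after which the bidual reduction assembles them into the statement.
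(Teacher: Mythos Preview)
Your reduction is circular. The two ``standard stability properties'' you invoke are not independent of the lemma. The statement that the range of a conditional expectation on a QWEP von Neumann algebra is QWEP \emph{is} the lemma specialized to von Neumann algebras, since for von Neumann subalgebras relative weak injectivity is precisely the existence of a conditional expectation. And the direction $A^{**}$ QWEP $\Rightarrow$ $A$ QWEP of the bidual criterion is proved by noting that $A$ is r.w.i.\ in $A^{**}$ and then applying the very lemma you are trying to establish; in the paper's ordering of the preliminaries (and in \cite{Oz}) Proposition~\ref{dd} comes \emph{after} this lemma for exactly that reason. Your normalization $\Phi = \pi \circ \varphi^{**}$ is correct, but all it does is reduce the general statement to the von Neumann algebra case of the same statement, which still has to be proved.

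The paper does not spell out a proof here (it cites \cite{Oz}), but the actual argument---reproduced verbatim in the $D$QWEP setting as Lemma~\ref{wri qwep}---is direct and avoids biduals of $A$ and $B$ altogether. Write $B = C/J$ with $C$ having the WEP and $\pi\colon C \to B$ the quotient map, and set $A_0 = \pi^{-1}(A)$. Then $A_0^{**} = J^{**} \oplus A^{**} \subseteq J^{**} \oplus B^{**} = C^{**}$, and the r.w.i.\ map $B \to A^{**}$ combined with the identity on $J^{**}$ yields a conditional expectation $C^{**} \to A_0^{**}$; thus $A_0$ is r.w.i.\ in $C$. By transitivity (Lemma~\ref{wri wri}) $A_0$ is r.w.i.\ in $\BB(\HH)$, hence has the WEP, and $A = A_0/J$ is QWEP. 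The missing idea is to pull back through the WEP cover of $B$, not to push forward to biduals.
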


Although the WEP does not pass to the double dual, the QWEP is more flexible.

\begin{proposition}\label{dd}
 A C$^*$-algebra $A$ is QWEP if and only if $A^{**}$ is QWEP.
\end{proposition}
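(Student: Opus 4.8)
The plan is to treat the two implications separately, the reverse being immediate and the forward one requiring an ultrapower argument.

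For the implication ``$A^{**}$ QWEP $\Rightarrow A$ QWEP'' I would simply observe that $A$ sits inside $A^{**}$ and that the identity map $\id_{A^{**}}\colon A^{**}\to A^{**}$ is a ucp map whose restriction to $A$ is $\id_A$. Thus $A$ is relatively weakly injective in $A^{**}$, and the preceding lemma (relative weak injectivity in a QWEP algebra forces QWEP) gives at once that $A$ is QWEP.

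For the converse, assume $A$ is QWEP. The first step is to record two closure properties of the class of QWEP algebras that follow from the material above. Quotients of QWEP algebras are QWEP, since a quotient of $B/J$ with $B$ WEP is itself a quotient of $B$. Products of QWEP algebras are QWEP: writing $A_i = B_i/I_i$ with each $B_i$ WEP, Lemma~\ref{prod wri} shows that $\prod_i B_i$ is relatively weakly injective in the injective algebra $\prod_i \BB(\HH_i)$ and is therefore itself WEP, whence $\prod_i A_i = \bigl(\prod_i B_i\bigr)\big/\bigl(\prod_i I_i\bigr)$ is QWEP. Combining the two, every C$^*$-ultrapower $A_\omega = \ell^\infty(I,A)/c_\omega(I,A)$, being a quotient of the product $\ell^\infty(I,A)$, is QWEP whenever $A$ is.

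It therefore suffices to exhibit $A^{**}$ as a relatively weakly injective subalgebra of such an ultrapower and to invoke once more the lemma that relative weak injectivity in a QWEP algebra yields QWEP. Concretely, I would use the standard fact (see Ozawa's survey \cite{Oz}) that for a free ultrafilter $\omega$ on a sufficiently large index set there is a unital $*$-embedding $A^{**}\hookrightarrow A_\omega$ together with a ucp map $A_\omega\to A^{**}$ restricting to the identity on $A^{**}$; composing the latter with the inclusion $A^{**}\hookrightarrow (A^{**})^{**}$ realizes $A^{**}$ as relatively weakly injective in $A_\omega$ and closes the argument. The main obstacle is precisely this last embedding: producing a genuine $*$-homomorphism $A^{**}\to A_\omega$, rather than a mere ucp map, requires the asymptotic multiplicativity supplied by the principle of local reflexivity for C$^*$-algebras, and constructing the accompanying weak expectation back onto $A^{**}$ is the technical heart of the proof. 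By contrast, the closure properties and the reverse implication are routine consequences of the lemmas already established.
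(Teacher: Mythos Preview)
Your reverse implication is fine and matches the standard argument: $A$ is relatively weakly injective in $A^{**}$, so QWEP passes down.

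The forward implication, however, has a genuine gap. The claimed ``standard fact'' that $A^{**}$ admits a unital $*$-embedding into a C$^*$-ultrapower $A_\omega$ is not standard and is in general false; more to the point, the mechanism you invoke to produce it is wrong. The principle of local reflexivity furnishes, for each finite-dimensional $F\subset A^{**}$, linear maps $F\to A$ that approximate the inclusion in a weak sense, but these maps carry no multiplicativity whatsoever --- they are not even $*$-linear or completely positive without further work. There is no way to assemble them into a $*$-homomorphism $A^{**}\to A_\omega$, so the relative weak injectivity you want cannot be obtained this way.

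The paper (and Ozawa's survey) proceeds in the \emph{opposite direction}: rather than embedding $A^{**}$ into something built from $A$, one maps something built from a WEP algebra \emph{onto} $A^{**}$. Concretely, take a surjection $\pi\colon B\to A$ with $B$ WEP and put $A$ in its universal representation so that $A''=A^{**}$. Form $\prod_J B$ over a sufficiently large directed set, and define the ucp map
\[
\varphi\colon \prod_J B \longrightarrow A^{**},\qquad \varphi\bigl((b_j)\bigr)=\lim_{j\to\mathcal U}\pi(b_j)\ \ \text{(ultraweak limit).}
\]
Kaplansky's density theorem guarantees that $\varphi$ maps the closed unit ball of $\prod_J B$ onto that of $A^{**}$. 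One then invokes the multiplicative-domain argument (Lemma~\ref{uball} in the paper, i.e.\ Kirchberg's Lemma~\ref{ki md}): the restriction of $\varphi$ to its multiplicative domain is a surjective $*$-homomorphism onto $A^{**}$, and the multiplicative domain is relatively weakly injective in $\prod_J B$, hence has WEP. This is exactly the route used in the module setting (Corollary~\ref{ds} via Lemma~\ref{dc}). The closure properties you proved (products, quotients) are correct and are used here too; what is missing from your argument is the Kaplansky-density/multiplicative-domain step, which replaces the unavailable $*$-embedding.
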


As a corollary of the above proposition, $\BB(\HH)^{**}$ is QWEP. Moreover we have the following equivalence.

\begin{corollary}\label{bl2qwep}
 A C$^*$-algebra $A$ is QWEP if and only if $A$ is relatively weakly injective in $\BB(\HH)^{**}$.
\end{corollary}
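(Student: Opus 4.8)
The plan is to treat the two implications separately, leaning on Proposition \ref{dd} and the fact just recorded that $\BB(\HH)^{**}$ is QWEP. The reverse implication is immediate: if $A$ is relatively weakly injective in $\BB(\HH)^{**}$, then since $\BB(\HH)^{**}$ is QWEP, the preceding lemma (relative weak injectivity into a QWEP algebra forces QWEP) shows at once that $A$ is QWEP. So the entire content sits in the forward direction.

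For the forward direction I would first reduce from $A$ to its bidual. The canonical inclusion $A \subseteq A^{**}$ is relatively weakly injective, since $\id_{A^{**}}$ restricts to the inclusion $A \to A^{**}$; hence by Lemma \ref{wri wri} it suffices to exhibit a copy $A^{**} \subseteq \BB(\HH)^{**}$ together with a ucp map $\BB(\HH)^{**} \to A^{**}$ restricting to the identity on $A^{**}$, and transitivity then delivers relative weak injectivity of $A$ in $\BB(\HH)^{**}$. By Proposition \ref{dd} the algebra $A^{**}$ is again QWEP, so the problem becomes a von Neumann algebra statement: the QWEP factor $M = A^{**}$ should admit, after a suitable embedding, a (non-normal) conditional expectation from $\BB(\HH)^{**}$ onto it.

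To construct this expectation I would unwind the definition of QWEP. Write $A$ as a quotient $q : B \to A$ of a C$^*$-algebra $B$ with the WEP, and fix a faithful representation $B \subseteq \BB(\HH)$. The double-dual functor yields a normal surjection $q^{**} : B^{**} \to A^{**}$ whose kernel is complemented by a central projection $z \in B^{**}$, so $A^{**} \cong zB^{**}$; in particular $A^{**}$ sits inside $B^{**} \subseteq \BB(\HH)^{**}$, and this is precisely the embedding I would use. Crucially it does \emph{not} place $A$ inside $\BB(\HH)$ itself, which is what lets the statement reach past the WEP. The WEP of $B$ gives a weak expectation $\BB(\HH) \to B^{**}$, and passing to biduals and composing with the canonical expectation $B^{****} \to B^{**}$ upgrades it to a ucp map $\Psi : \BB(\HH)^{**} \to B^{**}$ that is the identity on $B$. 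Compressing by $z$, i.e. following $\Psi$ with the normal expectation $B^{**} \to zB^{**} \cong A^{**}$, produces the candidate map $\BB(\HH)^{**} \to A^{**}$.

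The hard part will be verifying that this composition genuinely restricts to the identity on $A$. The subtlety is that $\Psi$ is completely positive but not multiplicative, while the compression involves the central projection $z$, which lives in $B^{**}$ rather than in $B$. I would handle this through the multiplicative-domain (Choi) theorem: because $\Psi$ fixes $B$, the algebra $B$ lies in its multiplicative domain, so $\Psi$ is a $B$-bimodule map, and one must push this bimodule property far enough to commute $\Psi$ past $z$ and to identify $z\Psi(\,\cdot\,)z$ with the canonical expectation onto $zB^{**}$ on the relevant elements. Checking this compatibility, together with the well-definedness of the embedding $A \hookrightarrow zB^{**}$ independently of the lifts chosen under $q$, is the step demanding the most care; everything else follows formally from Lemma \ref{wri wri}, Proposition \ref{dd}, and the preceding lemma.
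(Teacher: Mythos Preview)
Your approach is correct and is the standard argument (the paper itself records Corollary~\ref{bl2qwep} without proof, deferring this subsection to Ozawa's survey \cite{Oz}). However, you are making the final verification harder than necessary. The map $\Psi$ you construct is the identity on \emph{all} of $B^{**}$, not merely on $B$. Indeed, writing $j:B\hookrightarrow B^{**}$ for the canonical inclusion, the hypothesis $\phi\circ\iota=j$ (where $\iota:B\hookrightarrow\BB(\HH)$) gives, after taking biduals, $\phi^{**}|_{B^{**}}=j^{**}$; and an elementary duality computation shows that the canonical expectation $E:B^{****}\to B^{**}$ satisfies $E\circ j^{**}=\id_{B^{**}}$. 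Hence $\Psi|_{B^{**}}=\id_{B^{**}}$. Since your embedding places $A$ inside $zB^{**}\subset B^{**}$, the composite $x\mapsto z\,\Psi(x)$ is automatically the identity on $A$, and the compression by $z$ causes no difficulty whatsoever because $\Psi$ already fixes $z$ and every element of $zB^{**}$. No multiplicative-domain or bimodule argument is needed; that machinery would only be relevant if you truly had to push $\Psi$ past an element outside $B^{**}$, which never happens here.
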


\subsection{Hilbert C$^*$-Modules}

The notion of Hilbert C$^*$-modules first appeared in a paper by Irving Kaplansky in 1953 \cite{Kap53}. 
The theory was then developed by the work of William Lindall Paschke in \cite{Pas73} . 
In this section we give a brief introduction to Hilbert C$^*$-modules and 
present some of their fundamental properties which we are going to use throughout this paper.

\begin{definition}
 Let $D$ be a C$^*$-algebra. An inner-product $D$-module is a linear space $E$ which is a right $D$-module with compatible scalar multiplication: $\lambda(xa)=(\lambda x)a= x(\lambda a)$, for $x\in E$, $a\in D$, $\lambda\in \mathbb{C}$, and a map
$(x,y)\longmapsto \langle x,y\rangle:E\times E\to D$ with the following properties:

\begin{enumerate}
 \item $\langle x, \alpha y+\beta z\rangle=\alpha\langle x,y\rangle+\beta \langle x,z\rangle\quad$ for $x$, $y$, $z\in E$ and $\alpha$, 
$\beta\in \mathbb{C}$;
 \item $\langle x,ya\rangle=\langle x,y \rangle a\quad$ for $x$, $y\in E$ and $a\in D$;
 \item $\langle y,x\rangle=\langle x,y\rangle^*\quad$ for $x$, $y\in E$;
 \item $\langle x,x \rangle \geq 0$; if $\langle x,x \rangle=0$, then $x=0$.
\end{enumerate}
\end{definition}

For $x\in E$, we let $\|x\|=\|\langle x,x\rangle\|^{1/2}$. It is easy to check that if $E$ is an inner-product $D$-module, then $\|\cdot\|$ is a norm on $E$.

\begin{definition}
 An inner-product $D$-module which is complete with respect to its norm is called a Hilbert $D$-module or a Hilbert C$^*$-module over the C$^*$-algebra $D$.
\end{definition}

Note that any C$^*$-algebra $D$ is a Hilbert $D$-module itself with the inner product $\langle x,y \rangle = x^*y$ for 
$x$ and $y$ in $D$. Another important example of a Hilbert C$^*$-module is the following.

\begin{example}
\normalfont
 Let $\mathcal{H}$ be a Hilbert space. Then the algebraic tensor product $\mathcal{H}\otimes_{alg}D$ 
can be equipped with a $D$-valued inner-product:
\[
\langle \xi\otimes a, \eta \otimes b\rangle=\langle\xi,\eta \rangle a^*b \qquad (\xi,\eta \in \mathcal{H}, a,b\in D).
\]
Let $\mathcal{H}_D=\mathcal{H}\otimes D$ be the completion of $\mathcal{H}\otimes_{alg}D$ with respect to the induced norm. 
Then $\mathcal{H}_D$ is a Hilbert $D$-module. 
\end{example}

Let $E$ and $F$ be Hilbert $D$-modules. Let $t$ be an adjointable map from $E$ to $F$, i.e. there exists 
a map $t^*$ from $F$ to $E$ such that
\[
\langle tx,y \rangle=\langle x,t^*y\rangle, \qquad \text{for }x\in E\text{ and } y\in F.
\]
One can easily see that $t$ must be right $D$-linear, that is, $t$ is linear and $t(xa)=t(x)a$ for all $x\in E$ and $a\in D$. 
It follows that any adjointable map is bounded, but the converse is not true -- a bounded $D$-linear map need not be adjointable.
Let $\mathcal{L}(E,F)$ be the set of all adjointable maps from $E$ to $F$, and we abbreviate $\mathcal{L}(E,E)$ to $\mathcal{L}(E)$. Note that $\mathcal{L}(E)$ 
is a C$^*$-algebra equipped with the operator norm.

Now we review the notion of compact operators on Hilbert $D$-modules, as an analogue to the compact 
operators on a Hilbert space. Let $E$ and $F$ be Hilbert $D$-modules. For every $x$ in $E$ and $y$ in $F$, define 
the map $\theta_{x,y}:E\to F$ by
\[
\theta_{x,y}(z)=y\langle x,z \rangle \qquad \text{for }z\in E.
\]
One can check that $\theta_{x,y}\in \mathcal{L}(E,F)$ and $\theta_{x,y}^*=\theta_{y,x}$. We denote by $\mathcal{K}(E,F)$ 
the closed linear subspace of $\mathcal{L}(E,F)$ spanned by $\{\theta_{x,y}: x\in E, y\in F\}$, and we abbreviate $\mathcal{K}(E,E)$ 
to $\mathcal{K}(E)$. We call the elements of $\mathcal{K}(E,F)$ \emph{compact} operators.

Let $E$ be a Hilbert $D$-module and $Z$ be a subset of $E$. We say that $Z$ is a \emph{generating set} for $E$ if the closed 
submodule of $E$ generated by $Z$ is the whole of $E$. If $E$ has a countable generating set, we say that $E$ is countably generated.

In \cite{Kas80}, Kasparov proves the following theorem known as the \emph{absorption theorem}, 
which shows the universality of $\HH_D$ in the category of Hilbert $D$-modules.
\begin{theorem}
 \label{absorption}
Let $D$ be a C$^*$-algebra and $E$ be a countably generated Hilbert $D$-module. Then $E\oplus \mathcal{H}_D \approx \mathcal{H}_D$, i.e. there exists an element $u\in\mathcal{L}(E\oplus \HH_D,\HH_D)$ such that $u^*u=1_{E\oplus \HH_D}$ and $uu^*=1_{\HH_D}$.
\end{theorem}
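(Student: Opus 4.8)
The plan is to deduce the isomorphism from the single structural fact that every countably generated Hilbert $D$-module is orthogonally complemented in $\HH_D$, and then to close the argument with an Eilenberg-swindle. Granting that \emph{core fact}, write $\HH_D \approx E \oplus C$ for some complementary module $C$. Using the evident isomorphism $\HH_D \approx \HH_D \oplus \HH_D$, hence $\HH_D \approx \HH_D^{\oplus\infty}$, one computes
\[
\HH_D \approx (E\oplus C)^{\oplus\infty} \approx E \oplus (C\oplus E)^{\oplus\infty} \approx E \oplus \HH_D^{\oplus\infty} \approx E \oplus \HH_D ,
\]
and this adjointable isomorphism of Hilbert modules is exactly an element $u\in\LL(E\oplus\HH_D,\HH_D)$ with $u^*u = 1_{E\oplus\HH_D}$ and $uu^* = 1_{\HH_D}$. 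So the whole theorem is reduced to producing a projection $p\in\LL(\HH_D)$ with $p\HH_D\approx E$.

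To build such a $p$ I would start from countable generation: fix a sequence $(\xi_n)_{n\ge1}$ in the closed unit ball of $E$ that generates $E$, arranged so that each generator is repeated infinitely often, as the repetition is what the finishing step exploits. Realizing $\HH_D$ as the module of sequences $(a_n)$ with $\sum_n a_n^*a_n$ convergent, define $R\in\LL(\HH_D,E)$ by $R((a_n)) = \sum_n 2^{-n}\xi_n a_n$; the rapidly decaying coefficients guarantee norm convergence and adjointability, with $R^*\xi = (2^{-n}\langle\xi_n,\xi\rangle)_n$. Because the $\xi_n$ generate $E$, the range of $R$ is dense, so $R^*$ is injective; morally $R^*$ should embed $E$ into $\HH_D$ as the desired complemented copy, with the projection extracted from the range of $RR^*$ by functional calculus.

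The main obstacle is that $RR^*\in\LL(E)$ and $R^*R\in\LL(\HH_D)$, although positive, injective, and of dense range, are \emph{not} invertible and have no closed range; consequently $R^*$ is not bounded below, the polar decomposition of $R$ need not have an adjointable partial-isometry part, and one cannot simply split off $E$. Circumventing this is the technical heart of the theorem and is where Kasparov's device enters: using the infinite repetition of the generators together with the infinite supply of orthogonal copies inside $\HH_D$, one constructs the complementing projection by a stabilized, telescoping approximation rather than by inverting the modulus. Once $p$ is in hand, checking that $p\HH_D\approx E$ through adjointable maps, and then verifying the two identities $u^*u=1_{E\oplus\HH_D}$ and $uu^*=1_{\HH_D}$ from the swindle above, is routine bookkeeping with the Hilbert-module inner product.
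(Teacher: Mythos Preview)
The paper does not prove this statement: it is quoted in the preliminaries as Kasparov's absorption theorem and attributed to \cite{Kas80}, with no argument given. So there is no ``paper's own proof'' to compare against; any comparison is to the standard literature proofs.

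Your outline follows the usual route (e.g.\ as in Lance's monograph): reduce via an Eilenberg swindle to showing that $E$ sits as an orthogonally complemented summand of $\HH_D$, then try to build the complementing projection from a frame-type operator $R\colon \HH_D\to E$. The swindle step and the construction of $R$ are fine. The genuine gap is that you stop exactly at the hard point. You correctly diagnose that $RR^*$ has dense range but is not invertible, so the polar part of $R$ need not be adjointable and the range of $R^*$ need not be closed; you then say ``this is where Kasparov's device enters'' and leave it at that. But that device \emph{is} the content of the theorem: one must actually produce, by hand, an operator $T\in\LL(\HH_D, E\oplus\HH_D)$ with $T^*T=1_{\HH_D}$ and $TT^*=1_{E\oplus\HH_D}$ (or equivalently the projection $p$), typically by arranging the repeated generators so that a suitable positive combination becomes a strict approximate unit and the defect $1-TT^*$ collapses. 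Without writing that construction down, what you have is a reduction plus a statement of what remains, not a proof.
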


\begin{remark}
\label{universal}
\normalfont
Using the absorption theorem, for an arbitrary Hilbert $D$-module $E$, we have $\mathcal{L}(E\oplus \mathcal{H}_D) \simeq \mathcal{L(H}_D)$. 
Hence we have an embedding of
$\mathcal{L}(E)$ in $\mathcal{L(H}_D)$ and a conditional expectation from $\mathcal{L(H}_D)$ to $\mathcal{L}(E)$.
\end{remark}

Before we proceed to the main results of Hilbert C$^*$-modules, let us recall the notion of \emph{multiplier algebra} of a C$^*$-algebra.

\begin{definition}
 Let $A$ and $B$ be C$^*$-algebras. If $A$ is an ideal in $B$, we call $A$ an essential ideal if there is no nonzero ideal 
of $B$ that has zero intersection with $A$. Or equivalently if $b\in B$ and $bA=\{0\}$, then $b=0$.
\end{definition}

It can be shown that for any C$^*$-algebra $A$, there is a unique (up to isomorphism) maximal C$^*$-algebra which contains $A$ as an essential ideal. This algebra is called the multiplier 
algebra of $A$ and is denoted by $\mathcal{M}(A)$.

\begin{theorem}
\label{mult}
 If $E$ is a Hilbert $D$-module, then $\mathcal{L}(E)=\mathcal{M(K}(E))$.
\end{theorem}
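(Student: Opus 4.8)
The plan is to exhibit $\mathcal{K}(E)$ as an essential ideal of $\mathcal{L}(E)$ and then to match $\mathcal{L}(E)$ with the maximal C$^*$-algebra containing $\mathcal{K}(E)$ as such, which is by definition $\mathcal{M}(\mathcal{K}(E))$. Concretely, I would first check that $\mathcal{K}(E)$ is a closed two-sided ideal of $\mathcal{L}(E)$, then that it is essential; this yields a canonical injective $*$-homomorphism $\iota\colon \mathcal{L}(E)\to\mathcal{M}(\mathcal{K}(E))$ restricting to the identity on $\mathcal{K}(E)$. The remaining, and genuinely hard, task is to prove that $\iota$ is onto.

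For the ideal property I would use the two identities $t\,\theta_{x,y}=\theta_{x,ty}$ and $\theta_{x,y}\,t=\theta_{t^*x,y}$, valid for all $t\in\mathcal{L}(E)$ and $x,y\in E$, which follow immediately from $\theta_{x,y}(z)=y\langle x,z\rangle$ and the adjoint relation $\langle x,tz\rangle=\langle t^*x,z\rangle$. By linearity and continuity these give $t\,\mathcal{K}(E)\subseteq\mathcal{K}(E)$ and $\mathcal{K}(E)\,t\subseteq\mathcal{K}(E)$, so $\mathcal{K}(E)$ is a closed two-sided ideal. To see it is essential, suppose $t\,\mathcal{K}(E)=\{0\}$; then $\theta_{x,ty}=0$, i.e. $(ty)\langle x,z\rangle=0$ for all $x,y,z\in E$. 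Writing $w=ty$ and specializing to $x=z=w$ gives $w\langle w,w\rangle=0$, whence $\langle w,w\rangle^{3}=\langle w\langle w,w\rangle,\,w\langle w,w\rangle\rangle=0$; positivity of $\langle w,w\rangle$ in $D$ forces $\langle w,w\rangle=0$, so $w=ty=0$ for every $y$ and thus $t=0$. Hence $\mathcal{K}(E)$ is essential in $\mathcal{L}(E)$, and $\iota$ exists and is injective.

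Surjectivity is the crux, and I would prove it by identifying $\mathcal{M}(\mathcal{K}(E))$ with the double centralizers $(L,R)$ of $\mathcal{K}(E)$ and building, from such a pair, an operator $T\in\mathcal{L}(E)$ with $\iota(T)=(L,R)$. Since $\overline{\mathcal{K}(E)E}=E$ (obtained from a functional-calculus argument: with $a=\langle w,w\rangle$ one has $f_n(\theta_{w,w})\,w=w\,f_n(a)\to w$ for $f_n(t)=t(1/n+t)^{-1}$, and $f_n(\theta_{w,w})\in\mathcal{K}(E)$), I would define $T$ on the dense submodule $\mathcal{K}(E)E$ by $T(k\xi)=L(k)\xi$ and a candidate adjoint $S$ by $S(k\xi)=R(k^*)^*\xi$, then extend both by continuity. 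The adjointability of $T$ would then come straight from the centralizer linking identity $a\,L(b)=R(a)\,b$: applied to $a=\theta_{u,v}$, $b=\theta_{x,y}$ and unravelled through the product rule $\theta_{u,v}\theta_{x,y}=\theta_{x,\,v\langle u,y\rangle}$, it forces $\langle u,Ty\rangle=\langle Su,y\rangle$ for all $u,y$, i.e. $T^*=S$, so $T\in\mathcal{L}(E)$. Finally $T\,\theta_{x,y}=L(\theta_{x,y})$ gives $\iota(T)=(L,R)$, so $\iota$ is onto and $\mathcal{L}(E)=\mathcal{M}(\mathcal{K}(E))$.

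The main obstacle I anticipate is precisely this last construction: turning an abstract double centralizer into an honest adjointable module map. The delicate points are the well-definedness and boundedness of $T$ and $S$ on $\mathcal{K}(E)E$ — which require estimating $\bigl\|\sum_i L(k_i)\xi_i\bigr\|$ by $\|m\|\,\bigl\|\sum_i k_i\xi_i\bigr\|$ through the C$^*$-identity in $\mathcal{M}(\mathcal{K}(E))$ together with the nondegeneracy above — and, above all, producing the adjoint directly from $R$ so that $T$ lands in $\mathcal{L}(E)$ rather than merely among the bounded $D$-linear maps. Once boundedness is secured, the essentiality established in the second paragraph and the linking identity do the rest of the work.
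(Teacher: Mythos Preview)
The paper does not prove this theorem: it appears in the preliminaries (Section~2.2) as a standard fact from Hilbert C$^*$-module theory, stated without proof and attributed implicitly to the literature (Kasparov, Lance). So there is no ``paper's own proof'' to compare against.

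That said, your argument is the standard one and is correct. Two small comments on the points you flag as delicate. First, the well-definedness and boundedness of $T$ on $\mathcal{K}(E)E$ are most cleanly handled via an approximate unit $(e_\lambda)$ for $\mathcal{K}(E)$: since $L(e_\lambda k_i)=L(e_\lambda)k_i$ and $e_\lambda k_i\to k_i$, one has $\sum_i L(k_i)\xi_i=\lim_\lambda L(e_\lambda)\bigl(\sum_i k_i\xi_i\bigr)$ with $\|L(e_\lambda)\|\le\|m\|$, which gives both the independence from the representation $\sum_i k_i\xi_i$ and the norm bound in one stroke. Second, your adjoint check via the linking identity is right; unwinding it explicitly, $l^{*}L(k)=R(l^{*})k$ yields $(L(k))^{*}l=k^{*}R(l^{*})^{*}$, and hence $\langle L(k)\xi,\,l\eta\rangle=\langle k\xi,\,R(l^{*})^{*}\eta\rangle$, so $T^{*}=S$ as claimed.
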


Note that if $E=D$ for a unital C$^*$-algebra $D$, then $D=\mathcal{K}(D)$ and $\mathcal{L}(D)=\mathcal{M}(D)$.

In the special case where $E=\mathcal{H}_D$, we have 
\[
\mathcal{K(H}_D)\simeq \mathcal{K(H)}\underset{min}\otimes D = \K \otimes D,
\] 
where $\K = \mathcal{K(H)}$ is the C$^*$-algebra of the compact operators. Therefore, by Theorem ~\ref{mult} we have 
\[
\mathcal{L(H}_D)\simeq \mathcal{M}(\K\otimes D).
\]

In \cite{Kas80} Kasparov introduces a GNS type of construction in the context of Hilbert C$^*$-modules, known as the KSGNS construction (for Kasparov, Stinespring, Gelfand, Neimark, Segal) as follows.

\begin{theorem}
 \label{GNS}
 Let $A$ be a C$^*$-algebra, $E$ be a Hilbert $D$-module and let $\rho:A\to \mathcal{L}(E)$ be a completely positive map.
There exists a Hilbert $D$-module $E_{\rho}$, a $^*$-homomorphism $\pi_{\rho}:A\to \mathcal{L}(E_{\rho})$ and an element $v_{\rho}$ 
of $\mathcal{L}(E,E_{\rho})$, such that
\begin{align*}
\rho(a) =v^*_{\rho}\pi_{\rho}(a)v_{\rho} \qquad(a\in A), \\
\pi_{\rho}(A)v_{\rho}E\quad \text{is dense in}\quad E_{\rho}.
\end{align*}
\end{theorem}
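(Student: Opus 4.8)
The plan is to mimic Stinespring's dilation, but to run the GNS construction over the coefficient algebra $D$ rather than over $\mathbb{C}$. First I would form the algebraic tensor product $A\otimes_{alg}E$ and equip it with the $D$-valued sesquilinear form determined on elementary tensors by
\[
\langle a\otimes x,\, b\otimes y\rangle := \langle x,\, \rho(a^*b)y\rangle_E,
\]
extended linearly. The crucial point --- and the place where \emph{complete} positivity (rather than mere positivity) is indispensable --- is to verify that this form is positive semidefinite. For $z=\sum_{i=1}^n a_i\otimes x_i$ one has $\langle z,z\rangle=\sum_{i,j}\langle x_i,\rho(a_i^*a_j)x_j\rangle$, and since the matrix $[a_i^*a_j]$ is positive in $M_n(A)$, complete positivity of $\rho$ gives that $[\rho(a_i^*a_j)]$ is positive in $M_n(\mathcal{L}(E))\cong\mathcal{L}(E^n)$. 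Writing a positive operator as $S^*S$ shows that $\langle\xi,T\xi\rangle\geq 0$ in $D$ whenever $T\in\mathcal{L}(E^n)$ is positive, whence $\langle z,z\rangle\geq 0$.

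Next I would pass to a genuine Hilbert module. Using the Cauchy--Schwarz inequality valid for semi-inner-product $D$-modules, the null set $N=\{z:\langle z,z\rangle=0\}$ is a $D$-submodule on which the form degenerates, so $(A\otimes_{alg}E)/N$ is a bona fide inner-product $D$-module; I let $E_\rho$ be its completion. The representation $\pi_\rho$ is then defined by left multiplication on the first leg, $\pi_\rho(a)(b\otimes x+N)=ab\otimes x+N$. One checks it preserves $N$, extends to an adjointable operator with $\pi_\rho(a)^*=\pi_\rho(a^*)$, and is multiplicative, so that $\pi_\rho:A\to\mathcal{L}(E_\rho)$ is a $^*$-homomorphism. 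The needed boundedness estimate $\langle\pi_\rho(a)z,\pi_\rho(a)z\rangle\le\|a\|^2\langle z,z\rangle$ comes from the matrix inequality $[b_i^*a^*ab_j]\le\|a\|^2[b_i^*b_j]$ in $M_n(A)$ pushed through the order-preserving map $\rho$.

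It remains to produce $v_\rho\in\mathcal{L}(E,E_\rho)$ with $\rho(a)=v_\rho^*\pi_\rho(a)v_\rho$ and to verify the density clause. When $A$ is unital this is immediate: set $v_\rho x=1_A\otimes x+N$; then $\langle v_\rho x,v_\rho x\rangle=\langle x,\rho(1_A)x\rangle$ shows $v_\rho$ is bounded, a direct computation gives $v_\rho^*(a\otimes x+N)=\rho(a)x$ and hence $v_\rho^*\pi_\rho(a)v_\rho=\rho(a)$, while $\pi_\rho(a)v_\rho x=a\otimes x+N$ identifies $\pi_\rho(A)v_\rho E$ with the dense image of $A\otimes_{alg}E$ in $E_\rho$.

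I expect the main obstacle to be the non-unital case, where $1_A$ must be replaced by an approximate unit $(u_\lambda)$ and one must argue that $v_\rho x:=\lim_\lambda(u_\lambda\otimes x+N)$ converges in $E_\rho$ and defines an adjointable map. The convergence would be established by estimating $\langle(u_\lambda-u_\mu)\otimes x,(u_\lambda-u_\mu)\otimes x\rangle=\langle x,\rho((u_\lambda-u_\mu)^2)x\rangle$ and exploiting boundedness together with the positivity of $\rho$ along the approximate unit. Once $v_\rho$ is in hand, the identity $\rho(a)=v_\rho^*\pi_\rho(a)v_\rho$ and the density of $\pi_\rho(A)v_\rho E$ follow exactly as in the unital case.
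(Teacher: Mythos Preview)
The paper does not give its own proof of this theorem; it is stated in the preliminaries section as Kasparov's KSGNS construction and attributed to \cite{Kas80} (see also Lance's monograph \cite{La1}). Your argument is exactly the standard proof one finds in those references: form the semi-inner-product $D$-module $A\otimes_{alg}E$ with $\langle a\otimes x,b\otimes y\rangle=\langle x,\rho(a^*b)y\rangle_E$, use complete positivity to get positivity of the form, quotient and complete to obtain $E_\rho$, let $A$ act by left multiplication, and define $v_\rho$ via $1_A\otimes x$ (or an approximate unit in the non-unital case). So your proposal is correct and coincides with the proof the paper is implicitly invoking.

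One small remark on the non-unital sketch: the cleanest way to handle it is to extend $\rho$ to the unitization $\tilde A$ (any completely positive map into a unital $C^*$-algebra extends to $\tilde A$ by sending $1$ to $\rho(1):=\operatorname{s\text{-}lim}\rho(u_\lambda)$, or simply to an arbitrary positive element dominating the range appropriately), run the unital construction, and then restrict $\pi_\rho$ back to $A$; this avoids having to prove directly that $(u_\lambda\otimes x+N)_\lambda$ is Cauchy. Your approximate-unit argument also works, but you should note that what makes $\langle x,\rho((u_\lambda-u_\mu)^2)x\rangle\to 0$ go through is strict convergence $\rho(u_\lambda)\to \rho(1)$ in $\mathcal L(E)$, which uses that $\rho$ is completely positive (hence strictly continuous on bounded sets).
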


As a consequence of the above theorem, Kasparov shows that given a C$^*$-algebra $D$, any separable $C^*$-algebra can be considered as a C$^*$-subalgebra of $\mathcal{L(H}_D)$. 
This indicates that $\mathcal{L(H}_D)$ plays the similar role in the category of Hilbert C$^*$-modules to that of $\BB(\HH)$ in the category of C$^*$-algebras.

\begin{proposition}
 Let $A$ be a separable C$^*$-algebra. Then there exists a faithful nondegenerate $^*$-homomorphism 
$\pi: A\to \mathcal{L(H}_D)$. 
\end{proposition}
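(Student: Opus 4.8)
The plan is to produce the representation in two stages: first build a faithful, nondegenerate completely positive map from $A$ into $\mathcal{L}(E)$ for a suitable Hilbert $D$-module $E$, and then promote it to an honest $^*$-homomorphism whose range sits in $\mathcal{L}(\HH_D)$. The cleanest completely positive map available turns out to be already multiplicative, so I would first carry out the direct construction and keep the KSGNS construction (Theorem~\ref{GNS}) together with the absorption theorem (Theorem~\ref{absorption}) in reserve, to be used only for the reduction to the universal module $\HH_D$.

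For the direct construction, note that since $A$ is separable it has a faithful nondegenerate representation on a separable Hilbert space, which after an infinite ampliation we may take to be the very $\HH$ appearing in $\HH_D=\HH\otimes D$; thus $A\subseteq\BB(\HH)$ faithfully and nondegenerately. I would then set $\pi(a)=a\otimes 1_D$ on the dense submodule $\HH\otimes_{alg}D\subset\HH_D$ and verify three points. \emph{(i) Adjointability and boundedness:} for $x=\sum_i\xi_i\otimes d_i$ one computes $\langle(a\otimes1)x,(a\otimes1)x\rangle=\sum_{i,j}\langle\xi_i,a^*a\,\xi_j\rangle d_i^*d_j$, and since $\|a\|^2-a^*a\geq0$ in $\BB(\HH)$, with $b=(\|a\|^2-a^*a)^{1/2}$ the difference $\|a\|^2\langle x,x\rangle-\langle(a\otimes1)x,(a\otimes1)x\rangle$ equals $\langle(b\otimes1)x,(b\otimes1)x\rangle\geq0$; hence $\langle(a\otimes1)x,(a\otimes1)x\rangle\le\|a\|^2\langle x,x\rangle$ in $D$, so $\pi(a)$ extends to a bounded operator of norm $\|a\|$, adjointable with adjoint $\pi(a^*)$ via $\langle(a\otimes1)(\xi\otimes d),\eta\otimes e\rangle=\langle\xi\otimes d,(a^*\otimes1)(\eta\otimes e)\rangle$, and $\pi$ is a $^*$-homomorphism into $\mathcal{L}(\HH_D)$. \emph{(ii) Faithfulness:} if $a\neq0$, choose $\xi$ with $a\xi\neq0$ and $d\neq0$, whence $\langle a\xi\otimes d,a\xi\otimes d\rangle=\|a\xi\|^2\,d^*d\neq0$, so $\pi(a)\neq0$. \emph{(iii) Nondegeneracy:} as $A\HH$ is dense in $\HH$, the vectors $a\xi\otimes d$ span a dense subspace of $\HH\otimes_{alg}D$, so $\pi(A)\HH_D$ is dense in $\HH_D$.

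Should one prefer the categorical route advertised just before the statement, I would start from any faithful completely positive $\rho\colon A\to\mathcal{L}(\HH_D)$ (the multiplicative map above being the simplest choice, with $E=\HH_D$ countably generated) and apply the KSGNS construction to obtain a $^*$-homomorphism $\pi_\rho\colon A\to\mathcal{L}(E_\rho)$ with $\rho(a)=v_\rho^*\pi_\rho(a)v_\rho$. Faithfulness transfers because $\pi_\rho(a^*a)=0$ forces $\rho(a^*a)=v_\rho^*\pi_\rho(a^*a)v_\rho=0$ and hence $a=0$. Separability of $A$ together with countable generation of $E$ guarantees that $E_\rho=\overline{\pi_\rho(A)v_\rho E}$ is countably generated, so the absorption theorem and Remark~\ref{universal} supply an embedding $\mathcal{L}(E_\rho)\hookrightarrow\mathcal{L}(E_\rho\oplus\HH_D)\cong\mathcal{L}(\HH_D)$ carrying $\pi_\rho$ to the desired faithful nondegenerate $^*$-homomorphism.

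I expect the only genuine obstacle to be the well-definedness in step (i): everything hinges on the module inequality $\langle(a\otimes1)x,(a\otimes1)x\rangle\le\|a\|^2\langle x,x\rangle$, i.e. on positivity of the $D$-valued inner product applied to $b\otimes1$, which is precisely the Hilbert-module analogue of the Cauchy--Schwarz estimate. In the KSGNS variant the remaining work is bookkeeping — confirming that $E_\rho$ is countably generated and that the absorption embedding preserves both faithfulness and nondegeneracy — all of which follows routinely once the universality in Remark~\ref{universal} is invoked.
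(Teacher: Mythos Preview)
Your direct construction is correct and complete: the map $a\mapsto a\otimes 1_D$ on $\HH_D$ is a faithful nondegenerate $^*$-homomorphism, and your verification of boundedness via the square-root trick, adjointability, faithfulness, and nondegeneracy is clean. The paper does not actually prove this proposition; it merely attributes it to Kasparov as a consequence of the KSGNS construction (Theorem~\ref{GNS}). Your second paragraph therefore tracks the route the paper gestures at, while your first paragraph gives a more elementary and self-contained argument that bypasses KSGNS and absorption entirely.

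One caveat on the KSGNS variant: the corner embedding $\mathcal{L}(E_\rho)\hookrightarrow\mathcal{L}(E_\rho\oplus\HH_D)\cong\mathcal{L}(\HH_D)$ sends $T$ to $T\oplus 0$, so the composed representation is degenerate on the $\HH_D$ summand. To recover nondegeneracy along that route one typically either takes an infinite ampliation and argues that $E_\rho^{\oplus\infty}\cong\HH_D$ directly (which requires a fullness condition), or simply adds a nondegenerate summand; your sentence ``the absorption embedding preserves \ldots\ nondegeneracy'' glosses over this. Since your direct construction already delivers nondegeneracy without any such maneuver, this does not affect the validity of your overall proof.
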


As we see, $\mathcal{L}(\HH_D)$ plays the role of $\BB(\HH)$. Note that $\BB(\HH)$ is also a von Neumann algebra, 
but $\mathcal{L}(\HH_D)$ is not in general. Paschke in \cite{Pas73} introduces self-dual Hilbert C$^*$-modules to play the similar role in 
the von Neumann algebra context.

Let $E$ be a Hilbert $D$-module. Each $x\in E$ gives rise to a bounded $D$-module map $\hat {x}:E\to D$ defined by $\hat x(y)=\langle y,x \rangle$ for $y\in E$. 
We will call $E$ \emph{self-dual} if every bounded $D$-module map of $E$ into $D$ arises by taking $D$-valued inner products with some $x\in E$. 
For instance, if $D$ is unital, then it is a self-dual Hilbert $D$-module. Any self-dual Hilbert C$^*$-module is complete, but the converse is not true.

For von Neumann algebra $N$, it is natural to consider the the self-dual Hilbert $N$-module $E_N$, 
because of the following theorem from \cite{JS}.

\begin{theorem}
For a Hilbert C$^*$-module $E$ over a von Neumann algebra $N$, the following conditions are equivalent:
\begin{enumerate}
\item The unit ball of $E$ is strongly closed;
\item $E$ is principal, or equivalently, $E$ is an ultraweak direct sum of Hilbert C$^*$-modules $q_{\alpha}N$, for some projections $q_{\alpha}$;
\item $E$ is self-dual;
\item The unit ball of $E$ is weakly closed.
\end{enumerate}
\end{theorem}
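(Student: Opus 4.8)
The plan is to establish the four conditions equivalent through the cycle (1) $\Rightarrow$ (2) $\Rightarrow$ (3) $\Rightarrow$ (4) $\Rightarrow$ (1), with the central structural notion being self-duality. The implication (4) $\Rightarrow$ (1) is immediate: the strong topology on $E$ induced by $N$ is finer than the weak one, so any weakly closed set---in particular the unit ball $E_1=\{x\in E:\|x\|\le 1\}$---is automatically strongly closed. The real content therefore lies in the structural step (1) $\Rightarrow$ (2), the verification (2) $\Rightarrow$ (3), and the closedness step (3) $\Rightarrow$ (4).

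For (1) $\Rightarrow$ (2), I would build the orthogonal decomposition by a Zorn's lemma argument. Calling two elements $x,y$ orthogonal when $\langle x,y\rangle=0$, I would choose a maximal family $(x_\alpha)$ of mutually orthogonal elements whose self-inner-products $\langle x_\alpha,x_\alpha\rangle=q_\alpha$ are projections in $N$. The normalization to a projection is exactly where hypothesis (1) enters: starting from any $x$ with $a=\langle x,x\rangle\ge 0$ and range projection $q$, the elements $x\,f_n(a)$ with $f_n\to a^{-1/2}$ in bounded measurable functional calculus form a strongly Cauchy net in a multiple of the unit ball, whose strong limit lies in $E$ precisely because $E_1$ is strongly closed, and this limit $x'$ satisfies $\langle x',x'\rangle=q$. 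Each $x_\alpha$ then generates a copy of $q_\alpha N$ inside $E$ via $a\mapsto x_\alpha a$, and the orthogonality makes the ultraweak closure of their sum an ultraweak direct sum $\bigoplus_\alpha q_\alpha N$. Maximality together with strong closedness forces this submodule to be all of $E$: any element orthogonal to every $x_\alpha$ could be normalized and adjoined, contradicting maximality.

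The implication (2) $\Rightarrow$ (3) is a direct verification: given $E=\bigoplus_\alpha q_\alpha N$ and a bounded module map $\tau\colon E\to N$, its restriction to each summand $q_\alpha N$ is implemented by inner product with a single element $x_\alpha\in q_\alpha N$, and since $N$ is the dual of its predual the family $(x_\alpha)$ assembles into one element $x\in E$ with $\tau=\langle\,\cdot\,,x\rangle$, so $E$ is self-dual. Finally, for (3) $\Rightarrow$ (4), given a net $(x_i)$ in the unit ball converging weakly, the assignment $y\mapsto\lim_i\langle y,x_i\rangle$ defines a bounded module map $E\to N$, which by self-duality equals $\langle\,\cdot\,,x\rangle$ for some $x\in E$; a norm estimate gives $\|x\|\le 1$, so $x$ is the weak limit and $E_1$ is weakly closed, closing the cycle. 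I expect the main obstacle to be (1) $\Rightarrow$ (2), specifically justifying that the normalized elements and the ultraweak direct sum genuinely exhaust $E$---this is where strong closedness of the unit ball must be leveraged most carefully, both for the existence of the inverse-square-root normalizations as strong limits and for the exhaustion via maximality.
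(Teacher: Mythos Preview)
The paper does not prove this theorem; it is quoted in the preliminaries as a known result from \cite{JS} (Junge--Sherman), with no argument given. So there is no ``paper's own proof'' to compare against. Your cycle (1)$\Rightarrow$(2)$\Rightarrow$(3)$\Rightarrow$(4)$\Rightarrow$(1) is the standard route and is essentially the one taken in the cited source; the key structural step is indeed (1)$\Rightarrow$(2), where strong closedness of the unit ball is used to normalize generators via functional calculus and to guarantee that the maximal orthogonal family exhausts $E$.

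One point to tighten in your sketch of (3)$\Rightarrow$(4): you write ``given a net $(x_i)$ in the unit ball converging weakly,'' but the issue is precisely to show that such a net \emph{has} a weak limit in $E$. What you actually need is: if $(x_i)$ is a net in $E_1$ such that $\phi(\langle y,x_i\rangle)$ converges for every $y\in E$ and every $\phi\in N_*$, then the map $y\mapsto \sigma\text{-}\!\lim_i\langle y,x_i\rangle$ is a bounded $N$-module map $E\to N$, hence by self-duality is $\langle\,\cdot\,,x\rangle$ for some $x\in E_1$, and this $x$ is the weak limit. Phrased this way the argument is correct; just be careful not to presuppose the limit whose existence you are establishing.
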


We denote the algebra of adjointable maps on $E_N$ closed in the weak operator topology by $\mathcal{L}^w(E_N)$.

\begin{remark}
\label{e}
\normalfont
 According to \cite{Pas73} and the absorption theorem, for a von Neuamann algebra $N$, we have that $\mathcal{L}^w(E_N) = e \BB(\HH)\bar{\otimes} N e$ for some projection $e$. 
\end{remark}

\begin{remark}
\label{unitization}
\normalfont
 Let $N$ be a von Neumann subalgeba of $M$, such that $N=zM$ for some central projection $z\in M$. Then one can unitize the inclusion map $\iota: \BB(\ell_2)\bar{\otimes}N\hookrightarrow \BB(\ell_2)\bar{\otimes}M$. 
Indeed since $\BB(\ell_2)$ is a type I$_{\infty}$ factor, the projection $1\otimes z: \BB(\ell_2)\bar\otimes M \to  \BB(\ell_2)\bar\otimes N$ 
is properly infinite, and hence it is equivalent 
to identity on $\BB(\ell_2)\bar \otimes M$ \cite{Tak}. Let $1\otimes z = v^*v$, and $\id_{\BB(\ell_2)\bar \otimes M}=vv^*$. Note that $(1\otimes z)\circ \iota=\id_{\BB(\ell_2)\bar \otimes N}$.
Multiplying by $v$ from left and by $v^*$ from right, we get $v\iota v^*=\id_{\BB(\ell_2)\bar \otimes N}$.
\end{remark}

\subsection{Kirchberg's observations on the multiplier algebra}

In this section, we explore Kirchberg's seminal paper on non-semisplit extensions in detail. In particular we show the factorization property explicitly for readers' convenience. 

Let $A$, $B$ and $C$ be C$^*$-algebras. We say a map $h:A\to B$ \emph{factors through $C$ approximately via ucp maps in point-norm topology} if there exist ucp maps $\varphi_n:A\to C$ and $\psi_n: C\to B$ such that the following diagram commutes approximately 
in point-norm topology.
\[
\xymatrix{
   A\ar [dr]_{\varphi_n} \ar [rr]^{h} && B\\
    &C\ar [ur]_{\psi_n}&\\
  }   
\]
i.e.  $\|(\psi_n\circ\varphi_n)(x)-h(x)\|\to 0$ for all $x\in A$.

\begin{theorem}
\label{factor1}
Let $A$ be a C$^*$-algebra and $\mathcal{M}(A)$ be its multiplier algebra. Then the identity map on $\mathcal{M}(A)$
 factors through $\ell_{\infty}(A)$ approximately via ucp maps in point-norm topology.
\end{theorem}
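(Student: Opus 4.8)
The plan is to build the two families of ucp maps directly from a quasicentral approximate unit of $A$ and to read off the factorization from the associated partition of unity. First I would reduce to the $\sigma$-unital case, so that a countable approximate unit is available and $\ell_\infty(A)$ literally means bounded sequences indexed by $\N$; then I would choose an increasing quasicentral approximate unit $(e_n)_{n\ge1}$ with $e_0=0$ and set $d_n=(e_n-e_{n-1})^{1/2}\in A$. Since $\sum_{n\le N}d_n^2=e_N\to1$ strictly, the column $V=(d_1,d_2,\dots)^{\mathrm t}$ satisfies $V^*V=\sum_n d_n^2=1$ strictly, so $x\mapsto VxV^*$ lands in $\M(\K\otimes A)$ and $y\mapsto V^*yV$ sends it back, giving an \emph{exact} completely positive factorization of $\id_{\M(A)}$ through $\M(\K\otimes A)$. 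The whole task is therefore to replace the off-diagonal algebra $\M(\K\otimes A)$ by the diagonal algebra $\ell_\infty(A)$.

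Next I would write down the candidate maps into $\ell_\infty(A)$. The natural compression is $\varphi(x)=(d_nxd_n)_n$, which is completely positive and contractive with each coordinate $d_nxd_n\in A$ and $\|d_nxd_n\|\le\|x\|$, so $\varphi(x)\in\ell_\infty(A)$; the natural reassembly is $\psi((a_n)_n)=\sum_n d_na_nd_n$, which converges strictly by a Cauchy--Schwarz estimate for the row $(bd_n)_n$ and the column $(d_n)_n$, and is completely positive. Here $\psi$ is unital while $\varphi$ is only asymptotically unital; if $A$ is nonunital one passes to the unitization or adds a complementary corner, a routine adjustment I suppress. The real difficulty is that $\psi\circ\varphi(x)=\sum_n d_n^2 x d_n^2$ is \emph{not} $x$: a single diagonal compression discards exactly the near-diagonal information that no single partition can recover (already for $A=\K$ and $x$ the unilateral shift one computes $\psi\circ\varphi(x)=0$).

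To repair this I would let the maps depend on the approximation index, using overlapping windows of growing width, i.e. Ces\`aro/Fej\'er-type averages $\varphi_N,\psi_N$ built from blocks of the partition of length $N$, so that $\psi_N\circ\varphi_N$ becomes a completely positive averaging operator whose effective weight tends to $1$ on each fixed band. The main obstacle, and the technical heart of the argument, is to promote this to point-norm convergence $\|\psi_N\circ\varphi_N(x)-x\|\to0$ for \emph{every} $x\in\M(A)$. Since $A$ is only strictly, not norm, dense in $\M(A)$, and since quasicentrality gives $\|[e_n,a]\|\to0$ only for $a\in A$ and \emph{not} for general multipliers, this step cannot be reduced to a density argument nor to naively commuting the $d_n$ past $x$; it requires estimating the Fej\'er-averaged commutator error directly and exploiting the growth of the window width together with the quasicentral choice of $(e_n)$ to force that error to $0$ in norm. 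Establishing this uniform norm estimate is where the work lies; granting it, re-indexing the pairs $(\varphi_N,\psi_N)$ into a single sequence $(\varphi_n,\psi_n)$ yields the desired approximate factorization of $\id_{\M(A)}$ through $\ell_\infty(A)$.
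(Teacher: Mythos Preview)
Your skeleton is right---partition of unity $d_n=(e_n-e_{n-1})^{1/2}$ from an approximate unit, compression $x\mapsto(d_nxd_n)_n$ into $\ell_\infty(A)$, strict reassembly $\sum d_na_nd_n$---and you correctly locate the obstacle. But the step you flag as ``where the work lies'' is a genuine gap, and Fej\'er averaging over a \emph{fixed} approximate unit quasicentral only for $A$ cannot close it. Take $A=\K$, $\M(A)=\BB(\ell_2)$, $e_n$ the rank-$n$ projection, and let $U\in\BB(\ell_2)$ be the unitary that on each dyadic block $[2^k,2^{k+1})$ acts as order reversal. Any smooth-window Schur multiplier $\Phi_N$ built from $(e_n)$ has symbol $\sigma_N(m,n)\approx0$ once $|m-n|\gg N$; on a block with $2^k\gg N$ the far-from-diagonal part of $U$ is a partial isometry of norm $1$ that $\Phi_N$ annihilates, so $\|\Phi_N(U)-U\|=1$ for every $N$. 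Since $U$ carries off-diagonal content at \emph{all} scales, no growth rate for the window width helps. The missing ingredient is not an averaging trick but an approximate unit adapted to the multipliers themselves.

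Kirchberg's construction supplies exactly this and differs from your outline in two essential ways. First, for each separable $D\subset\M(A)$ he replaces the given approximate unit by convex combinations $(g_n)$ satisfying $\|[g_n,u_j]\|\le2^{-2n}$ for $j\le n$, where $(u_j)$ is dense in the unitaries of $D$; this is quasicentrality relative to $D\subset\M(A)$, not merely to $A$, and it is what forces norm convergence on $\pi^{-1}(\pi(D))$ (one then nets over the separable $D$ to cover all of $\M(A)$). Second, the partitions shift with the approximation index, $d_k^{(n)}=(g_{k+n}-g_{k+n-1})^{1/2}$, and the map into $\ell_\infty(A)$ is routed through the cone: with $\lambda_n\in\ell_\infty(A)\otimes C_0(0,1]$ built from the $g_n$ one sets $W_n(a\otimes f)=f(\lambda_n)(\Delta(a)\otimes1)$, verifies the algebraic identity $(V_n\otimes\id)W_n=(V_n\otimes\id)(\Delta\otimes\id)$ from the relations $g_ng_{n+1}=g_n$, and lifts $W_n$ to a ucp map $U_{n,f}:\M(A)\to\ell_\infty(A)\otimes C_0(0,1]$. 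The cone device replaces your hoped-for averaging estimate by an exact identity and is precisely what circumvents $\sum d_n^2xd_n^2\neq x$. Your exact factorization through $\M(\K\otimes A)$ via a column isometry is correct and is morally the starting point, but descending to the diagonal $\ell_\infty(A)$ genuinely requires these two devices rather than smoothing.
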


\begin{proof}[Sketch of proof] 
Let us fix an approximate unit $\{h_i\}$ for $A$. Assume $B$ is a separable unital subalgebra of $\mathcal{M}(A)/A$. Let
$\pi: \M(A) \to \M(A)/A$ be the quotient map, and $D$ be a separable unital subalgebra of $\mathcal{M}(A)$ 
such that $\pi(D)=B$ and $D$ contains the approximate unit. 
Using these objects, Kirchberg constructed unital completely positive contractions 
$V_n:\ell_{\infty}(A) \to \mathcal{M}(A)$, and linear maps 
$W_n:\mathcal{M}(A)\odot C_0(0,1]\to\ell_{\infty}(A)\otimes C_0(0,1]$ as follows:

Let $(u_i)$ be a dense sequence in the unitary group of $D$. One can find finite convex combinations $\{g_i\}$ of elements $\{h_i\}$ such that 
\begin{align}
g_ng_{n+1}&=g_n, \\
g_nh_n&=h_n, \\
\|g_nu_j-u_jg_n\|&\leq 2^{-2n}, \quad \text{if} \quad j\leq n. 
\end{align}

Now we define $d_0^{(n)}:=g_n^{1/2}$ and $d_k^{(n)}:=(g_{k+n}-g_{k+n-1})$ for $k\geq 1$, and put $V_n(b_1,b_2,...)=\sum_{k=0}^{\infty}d_k^{(n)}b_kd_k^{(n)}$, where the sum is taken in $A^{**}$. Define
\[
X_n:=(x_1^{(n)},x_2^{(n)},...)
\]
 where
 \[
 x_1^{(n)}=g_{n+2}, \quad x_k^{(n)}=g_{k+n+2}-g_{k+n+3}
 \]
Let $\Delta: \M(A) \to \ell_\infty(\M(A))$ be defined by $\Delta(a)=(a,a,...)$ for $a\in \mathcal{M}(A)$. Let $f_0\in C_0(0,1]$ be such that $f_0(x)=x$ for $x\in (0,1]$. Now define $W_n(a\otimes f)=f(\lambda_n)(\Delta(a)\otimes id)$, where $\lambda_n:=X_n\otimes f_0\in A\otimes C_0(0,1]$ for $a\in \mathcal{M}(A)$ and $f\in C_0(0,1]$.
It is easy to check that the maps $V_n$ and $W_n$ have the following properties:
\begin{enumerate}
\item $(V_n\otimes \id) (W_n(b))=(V_n\otimes \id)(\Delta \otimes \id )(b)$ for $b\in \mathcal{M}(A)\odot C_0(0,1]$.  (Use (2.1) and (2.2).)

\item The map 
\[
a\in \mathcal{M}(A)\longmapsto W_n(a\otimes f)+(c_0(A)\otimes C_0(0,1])
\]
lifts to a completely contractive map
\[
U_{n,f}:\mathcal{M}(A)\to \ell_{\infty}(A)  \otimes C_0(0,1] .
\]

\item $\lim_{n\to \infty} V_n(a)=a$, for $a\in {\pi}^{-1}(B)$. (Use (2.3).)
\end{enumerate}

Moreover for $f\in C_0(0,1]$ such that $0\leq f\leq 1$ and $f(1) =1$, let $g=\sqrt f$. One can define $U_{n,f}(a)=g(\lambda _n)(\Delta (a)\otimes \id)g(\lambda _n)$. Therefore, we have the following commutative diagram:
\[
\xymatrix{
   \mathcal{M}(A)\ar [drr]_{U_{n,f}} \ar [r]^{\id\otimes 1\qquad} & \mathcal{M}(A)\odot C_0(0,1] \ar [dr]^{W_n}\ar [rr]^{\id}&& \mathcal{M}(A)\odot C_0(0,1] \ar [r] ^{\qquad{\id\otimes\ev}} &\mathcal{M}(A)\\
    &&\ell_{\infty}(A)  \otimes C_0(0,1] \ar [ur]^{V_n\otimes \id}&&\\
  }   
\]
where $\ev$ is the evaluation of $f(x)$ in $C_0(0,1]$ at 1. From the properties above, we have
\begin{align*}
\lim_{n\to \infty}(\id\otimes\ev)(V_n\otimes \id)(W_n(b))&=\lim_{n\to \infty}(\id\otimes\ev)(V_n\otimes \id(\Delta(a)\otimes \id(f)))\\
&=\lim_{n\to \infty}V_n(a)f(1)=a.
\end{align*}
Since $\ell_{\infty}(A)  \otimes C_0(0,1]$ factors though $\ell_{\infty}(A)$, this proves that $\mathcal{M}(A)$ factors through $\ell_{\infty}(A)$ approximately via ucp maps.
\end{proof}
Using the above theorem, we can establish the following result on the relation between $\M(A)$ and $A^{**}$. 
\begin{corollary}\label{ma}
 Suppose $A$ is a C$^*$-algebra and $\mathcal{M}(A)$ is its multiplier algebra. Then $\mathcal{M}(A)$ is relatively weakly injective in $A^{**}$.
\end{corollary}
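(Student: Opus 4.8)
The plan is to upgrade the approximate ucp factorization of Theorem \ref{factor1} into a genuine weak expectation $A^{**}\to\mathcal{M}(A)^{**}$ by extending the factorization maps to the biduals and passing to a limit. Throughout I regard $\mathcal{M}(A)$ as a C$^*$-subalgebra of $A^{**}$ via its canonical embedding as the idealizer of $A$; note also that $A^{**}$ and $\mathcal{M}(A)^{**}$ are unital von Neumann algebras sharing the unit $1$ of $\mathcal{M}(A)$. With this identification the claim ``$\mathcal{M}(A)$ is relatively weakly injective in $A^{**}$'' asks for a ucp map $\Phi\colon A^{**}\to\mathcal{M}(A)^{**}$ with $\Phi|_{\mathcal{M}(A)}=\id$.

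First I would assemble the maps. By Theorem \ref{factor1} there are ucp maps $\varphi_n\colon\mathcal{M}(A)\to\ell_\infty(A)$ and $\psi_n\colon\ell_\infty(A)\to\mathcal{M}(A)$ with $\|\psi_n\varphi_n(x)-x\|\to0$ for every $x\in\mathcal{M}(A)$. Each coordinate copy of $A$ is trivially relatively weakly injective in $A^{**}$ (the identity of $A^{**}$ restricts to $\id_A$), so Lemma \ref{prod wri} shows that $\ell_\infty(A)=\prod_n A$ is relatively weakly injective in $\ell_\infty(A^{**})=\prod_n A^{**}$; fix a ucp map $E\colon\ell_\infty(A^{**})\to\ell_\infty(A)^{**}$ with $E|_{\ell_\infty(A)}=\id$. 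Let $\psi_n^{**}\colon\ell_\infty(A)^{**}\to\mathcal{M}(A)^{**}$ denote the normal ucp bidual of $\psi_n$.

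The key step is to extend each $\varphi_n$ to $A^{**}$. In Kirchberg's construction (after reducing $\ell_\infty(A)\otimes C_0(0,1]$ to $\ell_\infty(A)$) the map $\varphi_n$ is a compression $x\mapsto s_n^*\,\Delta(x)\,s_n$, where $\Delta\colon\mathcal{M}(A)\to\ell_\infty(\mathcal{M}(A))$ is the diagonal and $s_n$ is a fixed element built from the cut-down sequence $g(\lambda_n)\in\ell_\infty(A)$; its output lands in $\ell_\infty(A)$ precisely because $A$ is an ideal in $\mathcal{M}(A)$. Since the diagonal extends to the normal $*$-homomorphism $\Delta_\infty\colon A^{**}\to\ell_\infty(A^{**})$ and $s_n\in\ell_\infty(A)\subseteq\ell_\infty(A^{**})$, the same formula defines a completely positive contraction $\tilde\varphi_n\colon A^{**}\to\ell_\infty(A^{**})$ with $\tilde\varphi_n|_{\mathcal{M}(A)}=\varphi_n$. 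Setting
\[
\Phi_n:=\psi_n^{**}\circ E\circ\tilde\varphi_n\colon A^{**}\longrightarrow\mathcal{M}(A)^{**},
\]
each $\Phi_n$ is a completely positive contraction, and for $x\in\mathcal{M}(A)$ we have $\tilde\varphi_n(x)=\varphi_n(x)\in\ell_\infty(A)$, hence $E(\varphi_n(x))=\varphi_n(x)$ and $\Phi_n(x)=\psi_n\varphi_n(x)\to x$ in norm; taking $x=1$ gives $\Phi_n(1)\to1$.

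Finally I would take a limit. The completely positive contractions $A^{**}\to\mathcal{M}(A)^{**}$ form a compact set in the point-ultraweak topology (Tychonoff, using that bounded balls of the von Neumann algebra $\mathcal{M}(A)^{**}$ are ultraweakly compact), so a cluster point $\Phi$ of the net $(\Phi_n)$ exists. Complete positivity and contractivity pass to this limit, and $\Phi(1)=1$, so $\Phi$ is ucp; moreover $\Phi|_{\mathcal{M}(A)}=\id$ because $\Phi_n(x)\to x$ in norm for $x\in\mathcal{M}(A)$. This $\Phi$ is the desired weak expectation. The main obstacle is the third step: extending $\psi_n$ naively through its defining series would produce a map into $A^{**}$ rather than $\mathcal{M}(A)^{**}$, so one must instead route $\tilde\varphi_n$ into the bidual $\ell_\infty(A)^{**}$ through $E$ in order to apply the normal extension $\psi_n^{**}$ and genuinely land in $\mathcal{M}(A)^{**}$, while checking that $\tilde\varphi_n$ still restricts to $\varphi_n$ on $\mathcal{M}(A)$ so that the limit fixes $\mathcal{M}(A)$.
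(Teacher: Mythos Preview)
Your argument is correct and follows essentially the same route as the paper: extend the compression-type map $U_{n,f}$ (your $\varphi_n$) from $\mathcal{M}(A)$ to $A^{**}$ using its explicit form $s_n^*\Delta(\cdot)s_n$, use the relative weak injectivity of $\ell_\infty(A)$ in $\ell_\infty(A^{**})$ to route back, and pass to a point-weak$^*$ limit to obtain the conditional expectation into $\mathcal{M}(A)^{**}$. Your packaging is in fact a bit cleaner than the paper's, since you suppress the auxiliary $C_0(0,1]$ factor early and make explicit the use of the normal bidual $\psi_n^{**}$, whereas the paper keeps the $C_0(0,1]$ tensor factor and phrases the limit via a local net $(F_0,F,\varepsilon)$; the underlying mechanism is the same.
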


\begin{proof}
 With the notation of the previous theorem, since $U_{n,f}$ lifts $W_n$, we have
\[\lim_{n\to \infty}(\id\otimes\ev)(V_n\otimes \id)(U_{n,f}(a))= a,\]
for $a\in \pi^{-1}(B)$. 

Since there is a natural inclusion $\mathcal{M}(A)\subset A^{**}$, we can define $\tilde {U}_{n,f}:A^{**}\to \ell_{\infty}(A^{**})  \otimes C_0(0,1]$ as an extension of ${U_{n,f}}$, by
 $\tilde{U}_{n,f}(a)=g(\lambda _n)(\Delta (a)\otimes \id)g(\lambda _n)$ for all $a\in A^{**}$. Using the fact that $\ell_{\infty}(A)  \otimes C_0(0,1]$ is \emph{r.w.i.} in $\ell_{\infty}(A^{**})  \otimes C_0(0,1]$, we get the following diagram 
\[
\xymatrix{
   \mathcal{M}(A)\ar@ {^{(}->} [dd] \ar [drr]_{U_{n,f}} \ar [r] & \mathcal{M}(A)\odot C_0(0,1] \ar [dr]^{W_n}\ar [rr]^{\id}&& \mathcal{M}(A)\odot C_0(0,1] \ar [r] ^{\qquad{\id\otimes\ev}} &\mathcal{M}(A)\\
    &&\ell_{\infty}(A)  \otimes C_0(0,1] \ar[ur]^{V_n\otimes \id}&&\\
    A^{**} \ar [rr]_{\tilde{U}_{n,f}}&& \ell_{\infty}(A^{**})  \otimes C_0(0,1] \ar [u] &
  }   
\]
which commutes locally: let $\e$ be arbitrary, $F$ a finite-dimensional subspace of $A^{**}$, and $F_0=F\cap \M(A)$. Then we get a net $\Lambda=(F_0, F, \e)$.  Define $\psi_{n,\lambda}: A^{**}\to \mathcal{M}(A)$ locally, as the composition of the following maps 
\[
  F\overset{\tilde{U}_{n,f}}\hookrightarrow A^{**}\to \ell_\infty(A^{**})\otimes C_0(0,1]\to \ell_\infty(A)\otimes C_0(0,1]\overset{V_n\otimes id}\longrightarrow\mathcal{M}(A)\odot C_0(0,1]\overset{\id\otimes\ev}\longrightarrow\mathcal{M}(A).
\]
Then we have
\[
\lim_{n, \lambda}\psi_{n,\lambda}(1)=1.
\]
Let $\psi:=\lim_{n, \lambda}\psi_{n,\lambda}: A^{**}\to \mathcal{M}(A)^{**}$ in the weak $^*$-topology. Then $\psi$ gives the required conditional expectation, and this proves that $\mathcal{M}(A)$ is \emph{r.w.i.} in $A^{**}$.
\end{proof}

\section{Module version of the weak expectation property}

The notion of \emph{r.w.i.} is a paired relation between a C$^*$-subalgebra and its parent C$^*$-algebra. If the 
parent C$^*$-algebra is $\BB(\HH)$, the \emph{r.w.i.} property is equivalent to the WEP. By carefully choosing a parent C$^*$-algebra, 
we can define the notion of WEP relative to a C$^*$-algebra. 
 
Let $\mathcal{C}$ be a collection of inclusions of unital C$^*$-algebras $\{(A\subseteq X)\}$.

For a C$^*$-algebra $D$, there are two classes of objects that we will discuss throughout this paper. 
\begin{enumerate}
 \item $\mathcal{C}_1 = \{ A\subseteq \mathcal{L}(E_D)  \}$, where $E_D$ is a Hilbert $D$-module.
 \item $\mathcal{C}_2 = \{ A\subseteq \mathcal{L}^w(E{_{D^{**}})}\}$, where $E_{D^{**}}$ is a self dual Hilbert $D^{**}$-module.
 \end{enumerate}

\begin{definition}
 A C$^*$-algebra $A$ is said to have the $D$WEP$_i$ for $i=1,2$, if there exists a pair of inclusions $A\subseteq X $ in $\mathcal{C}_i$ 
such that $A$ is relatively weakly injective in $X$.
\end{definition}

Notice that the notion of $D$WEP is a \emph{r.w.i.} property. By Corollary ~\ref{wepwri}, the WEP implies the $D$WEP$_i$, for $i=1,2$. 
Also, inherited from \emph{r.w.i.} property, we have the following lemmas for $D$WEP. 

\begin{lemma}
 \label{wri wep}
Let $A_0$ and $A_1$ be C$^*$-algebras such that $A_0$ is relatively weakly injective in $A_1$. If $A_1$ has the $D$WEP$_i$ for $i=1,2$, then so does $A_0$.
\end{lemma}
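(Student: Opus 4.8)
The plan is to observe that the $D$WEP$_i$ is defined purely as a relative weak injectivity condition with respect to an ambient algebra in the class $\mathcal{C}_i$, so the statement should reduce directly to the transitivity of relative weak injectivity recorded in Lemma \ref{wri wri}.

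First I would unwind the hypothesis that $A_1$ has the $D$WEP$_i$: by definition there is an inclusion $A_1 \subseteq X$ belonging to $\mathcal{C}_i$ such that $A_1$ is \emph{r.w.i.} in $X$. Concretely, when $i=1$ this means $X = \mathcal{L}(E_D)$ for some Hilbert $D$-module $E_D$, and when $i=2$ it means $X = \mathcal{L}^w(E_{D^{**}})$ for some self-dual Hilbert $D^{**}$-module $E_{D^{**}}$. Composing the given unital inclusion $A_0 \subseteq A_1$ with $A_1 \subseteq X$ then produces a unital inclusion $A_0 \subseteq X$. The key structural remark is that the collection $\mathcal{C}_i$ constrains only the ambient algebra $X$ (it must be of the form $\mathcal{L}(E_D)$ or $\mathcal{L}^w(E_{D^{**}})$) and imposes no condition on the subalgebra sitting inside it; hence the inclusion $(A_0 \subseteq X)$ again lies in $\mathcal{C}_i$.

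It then remains to apply Lemma \ref{wri wri} to the chain $A_0 \subseteq A_1 \subseteq X$. Since $A_0$ is \emph{r.w.i.} in $A_1$ by hypothesis and $A_1$ is \emph{r.w.i.} in $X$ by the previous paragraph, transitivity yields a ucp map $X \to A_0^{**}$ restricting to the identity on $A_0$, i.e. $A_0$ is \emph{r.w.i.} in $X$. Because $(A_0 \subseteq X)$ belongs to $\mathcal{C}_i$, this exhibits a witnessing inclusion and shows that $A_0$ has the $D$WEP$_i$, which is exactly the desired conclusion; the same argument works uniformly for $i=1$ and $i=2$.

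The argument is essentially immediate, and I do not expect any genuine obstacle. The only points worth checking carefully are bookkeeping ones: that the composite $A_0 \subseteq X$ is a \emph{unital} inclusion of unital C$^*$-algebras (so that the definitions of \emph{r.w.i.} and of $\mathcal{C}_i$ literally apply), and that Lemma \ref{wri wri} is stated for precisely this situation of nested C$^*$-algebras $A_0 \subseteq A_1 \subseteq X$. Both are routine, so the substance of the lemma is entirely carried by the transitivity result.
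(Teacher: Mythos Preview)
Your proposal is correct and follows exactly the same approach as the paper: use the definition of $D$WEP$_i$ to get $A_1$ \emph{r.w.i.} in some $X\in\mathcal{C}_i$, then apply the transitivity Lemma~\ref{wri wri} to the chain $A_0\subseteq A_1\subseteq X$. The paper's proof is just a two-line version of what you wrote.
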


\begin{proof}
 Since $A_1$ has the $D$WEP$_i$, there exists a pair of inclusions $A_1\subseteq X$ in $\mathcal{C}_i$ 
such that $A_1$ is \emph{r.w.i.} in $X$, for $i=1,2$. 
By Lemma ~\ref{wri wri}, $A_0$ is \emph{r.w.i.} in $X$. Therefore the result follows.
\end{proof}

\begin{remark}
\normalfont
 By the absorption theorem and Remark ~\ref{universal} and ~\ref{e}, $\mathcal{L}(E_D)$ is 
\emph{r.w.i.} in some $\mathcal{L}(\HH_D)$ and $\mathcal{L}^w(E{_{D^{**}})}$ is \emph{r.w.i.} in some 
$\BB(\HH)\bar\otimes D^{**}$. Sometimes it is more convenient to consider the $D$WEP$_1$ as the relatively weak injectivity 
in $\mathcal{L}(\HH_D)$, and the $D$WEP$_2$ as the relatively weak injectivity in $\BB(\HH)\bar\otimes D^{**}$, because of the 
concrete structures. 
\end{remark}

\begin{example}\label{kd}
\normalfont
 From the above, all WEP algebras have $D$WEP$_i$ for arbitrary C$^*$-algebra $D$. Also, $D$ has the $D$WEP$_i$ trivially
for $1$-dimensional Hilbert space $\HH$. Our first nontrivial example of $D$WEP$_i$ is $\mathcal{K} \otimes D$. For the first class $\C_1$,
$\mathcal{K} \otimes D$ is a principle ideal of $\mathcal{L}(\mathcal{H}_D)$, and thus is \emph{r.w.i.} in $\mathcal{L}(\mathcal{H}_D)$.
For the second class $\C_2$, note that $(\mathcal{K} \otimes D)^{**} = \BB(\mathcal{H}) \bar{\otimes} D^{**}$, so $\mathcal{K} \otimes D$ is \emph{r.w.i.} 
in $\BB(\mathcal{H}) \bar{\otimes} D^{**}$. By universality of $\mathcal{L}(\mathcal{H}_D)$
and $\BB(\mathcal{H}) \bar{\otimes} D^{**}$, $\mathcal{K} \otimes D$ has the $D$WEP$_i$ for both $i=1,2$. 
\end{example}

Because of the injectivity of $\BB(\HH)$, we see that the notion of WEP does not depend on the representation $A\subseteq \BB(\mathcal{H})$. By constructing a universal object
in the classes $\C_i$, we can define the $D$WEP$_i$ independent of inclusions.

\begin{lemma}
A C$^*$-algebra $A$ has the $D$WEP$_i$ for some inclusion $A\subseteq X $ in $\mathcal{C}_i$, if and only if there exists a universal object
$X^u$ and $A\subseteq X^u$ in $\C_i$, such that 
\begin{enumerate}
 \item $A$ is relatively weakly injective in $X^u$; 
 \item If $A$ is relatively weakly injective in some $X$, then there exists a ucp map from $X^u$ to $X$, 
which is identity on $A$. 
\end{enumerate}

\end{lemma}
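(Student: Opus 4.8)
The plan is to prove the two implications separately. The reverse implication is essentially a tautology, while the forward implication requires building $X^u$ explicitly as a direct sum over a representative set of \emph{r.w.i.} inclusions.

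First, the ``if'' direction needs no construction: if a universal object $X^u\in \C_i$ with $A\subseteq X^u$ satisfies property (1), then $A$ is \emph{r.w.i.} in $X^u$, and since $X^u$ lies in $\C_i$, the very definition of $D$WEP$_i$ shows that $A$ has the $D$WEP$_i$. Property (2) plays no role in this direction.

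For the ``only if'' direction, I would fix a representative set $\{(E_{D,j},\pi_j)\}_{j\in \mathcal{I}}$ of all inclusions $A\subseteq \mathcal{L}(E_{D,j})$ lying in $\C_i$ for which $A$ is \emph{r.w.i.}, and then set $E^u_D=\bigoplus_{j\in\mathcal{I}}E_{D,j}$, taking the Hilbert module direct sum in the case of $\C_1$ and the self-dual (ultraweak) direct sum in the case of $\C_2$. The diagonal representation $\pi^u=\bigoplus_{j}\pi_j$ embeds $A$ faithfully into $X^u:=\mathcal{L}(E^u_D)$ (resp.\ $\mathcal{L}^w(E^u_{D^{**}})$), so that $A\subseteq X^u$ is an object of $\C_i$. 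For each $j$ the projection $p_j$ onto the summand $E_{D,j}$ commutes with $\pi^u(A)$, so the compression $\Phi_j\colon X^u\to \mathcal{L}(E_{D,j})$, $x\mapsto p_j x p_j$, is a ucp map that restricts to the identity on $A$ (since $p_j\pi^u(a)p_j=\pi_j(a)$). Property (2) is then immediate: any $X$ in which $A$ is \emph{r.w.i.} is, up to equivalence, one of the summands $\mathcal{L}(E_{D,j})$, and $\Phi_j$ is the desired ucp map fixing $A$. Property (1) follows because the $D$WEP$_i$ hypothesis furnishes at least one index $j_0$ together with a weak expectation $\Psi_{j_0}\colon \mathcal{L}(E_{D,j_0})\to A^{**}$; the composition $\Psi_{j_0}\circ\Phi_{j_0}\colon X^u\to A^{**}$ is ucp and equals $\id$ on $A$, witnessing that $A$ is \emph{r.w.i.} in $X^u$.

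The main obstacle is set-theoretic: the collection of all \emph{r.w.i.} inclusions of $A$ in objects of $\C_i$ is a priori a proper class, so one must argue that it can be replaced by a genuine set of representatives that still captures every such $X$ via the corner maps $\Phi_j$. I expect to handle this by decomposing an arbitrary representation $\pi\colon A\to \mathcal{L}(E_D)$ into cyclic pieces through the KSGNS construction of Theorem \ref{GNS}; these pieces are parametrized, up to unitary equivalence, by $D$-valued completely positive maps on $A$, which do form a set, while the absorption theorem \ref{absorption} absorbs the multiplicities, so that every $X$ appears as a corner of an amplification already present in $E^u_D$. A secondary point, specific to $\C_2$, is to verify that the self-dual direct sum of self-dual Hilbert $D^{**}$-modules is again self-dual, so that $X^u$ genuinely lies in $\C_2$; this follows from the equivalence of the unit-ball closure conditions with self-duality recorded in the preliminaries.
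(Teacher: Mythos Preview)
Your proposal is correct and follows the same skeleton as the paper's proof: build $X^u$ as a direct sum of Hilbert $D$-modules, embed $A$ diagonally, and recover any particular $X$ via a corner/compression map. The paper organizes the construction slightly differently: rather than indexing over a representative set of \emph{r.w.i.} inclusions, it indexes over \emph{all} ucp maps $\rho:A\to\mathcal{L}(E_D)$, applies KSGNS to each to obtain a representation $\pi_\rho:A\to\mathcal{L}(E_\rho)$, and sets $E^u_D=\bigoplus_\rho E_\rho$. This yields a universal object that does not depend on which inclusions happen to be \emph{r.w.i.}; in particular, the paper obtains the truncation map $\mathcal{L}(E^u_D)\to\mathcal{L}(E_D)$ for \emph{every} $\mathcal{L}(E_D)$ containing $A$, not only the \emph{r.w.i.} ones, which is convenient later when the $max^D_1$ norm is shown to be functorial under arbitrary ucp maps. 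Your version is adequate for the lemma as stated, and you are more careful than the paper about the set-theoretic issue and about the self-dual direct sum needed in the $\C_2$ case.
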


\begin{proof}
 The ``if'' part is trivial. For the ``only if'' part, take $\C_1$ for example. The proof of the other case is similar. For all ucp maps $\rho:  A\to \mathcal{L}(E_D)$, by KSGNS construction there exists a Hilbert $D$-module $E_\rho$ and a $^*$-homomorphism $\pi_\rho: A \to \mathcal{L}(E_\rho)$. 
Let $E^u_D = \bigoplus_{\rho} E_\rho$. Then any $\mathcal{L}(E_D)$ containing $A$ can be embedded into $\mathcal{L}(E^u_D)$, and
there exists a truncation $\mathcal{L}(E^u_D) \to \mathcal{L}(E_D)$. Now suppose $A$ is \emph{r.w.i.} in some $\mathcal{L}(E_D)$. Then it is also \emph{r.w.i.}
 in $\mathcal{L}(E^u_D)$. Hence we complete the proof. 
\end{proof}

Following Lance's tensor product characterization Theorem ~\ref{tensor wri}, we have a similar result for the $D$WEP$_i$, for $i=1,2$. We only 
present the result for the first class. The other case can be proved similarly.

Let $A\subseteq\mathcal{L}(E^u_D)$ be the universal representation. We define a tensor norm $max^D_1$ on $A\otimes C^*\F_{\infty}$ 
to be the norm induced from the inclusion 
$A\otimes C^*\F_{\infty} \subseteq \mathcal{L}(E^u_D)\otimes_{max}C^*\F_{\infty}$ isometrically. This induced norm is categorical 
in the sense that if $\phi$ is a ucp map from $A$ to $B$, then $\phi\otimes \id$ extends a ucp map from 
$A\otimes_{max_1^D} C^*\F_{\infty}$ to $B\otimes_{max_1^D} C^*\F_{\infty}$. Indeed, let $\iota$ be the inclusion map
from $B$ to its universal representation $\mathcal{L}^B(E^u_D)$, then $\iota \circ \phi$ is a ucp map from 
$A$ to $\mathcal{L}^B(E^u_D)$. By KSGNS and the construction of $\mathcal{L}^A(E^u_D)$, there exists a ucp map from 
$\mathcal{L}^A(E^u_D)$ to $\mathcal{L}^B(E^u_D)$ extending the map $\iota \circ \phi$. Hence we have a composition of 
ucp maps
\[
 A\underset{max^D_1}\otimes C^*\F_{\infty} \subseteq \mathcal{L}^A(E^u_D)\underset{max}\otimes C^*\F_{\infty} \to \mathcal{L}^B(E^u_D)\underset{max}\otimes C^*\F_{\infty}, 
\]
whose image is $B\otimes_{max_1^D} C^*\F_{\infty}$.

\begin{theorem}
\label{tensor wep}
 A C$^*$-algebra $A$ has the the $D$WEP$_1$, if and only if
\[
 A\underset{max^D_1}\otimes C^*\F_{\infty} = A\underset{max} \otimes C^*\F_{\infty}.
\]
\end{theorem}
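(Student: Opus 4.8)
The plan is to prove both implications by adapting Lance's Trick (the proof of Theorem~\ref{tensor wri}), with $L:=\mathcal{L}(E^u_D)$ in the role of $\BB(\HH)$ and the norm $max^D_1$ in the role of the minimal norm. Since the inclusion $A\hookrightarrow L$ is a unital $*$-homomorphism, the induced map $A\otimes_{max}\CF\to L\otimes_{max}\CF$ is contractive, so by the very definition of $max^D_1$ one always has $\|x\|_{max^D_1}\le\|x\|_{max}$ on $A\otimes\CF$. Thus in the forward direction it remains to prove the reverse inequality, while the converse takes the resulting equality as a hypothesis.

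For the forward implication, assume $A$ has the $D$WEP$_1$, so that there is a ucp map $\Phi:L\to A^{**}$ with $\Phi|_A=\id_A$. I would first record the standard fact that the canonical inclusion $A\otimes_{max}\CF\hookrightarrow A^{**}\otimes_{max}\CF$ is isometric: any pair of commuting representations of $A$ and $\CF$ on a Hilbert space extends, via the normal extension of the representation of $A$ to $A^{**}$ (whose range lies in the bicommutant and hence still commutes with the image of $\CF$), to a representation of $A^{**}\otimes_{max}\CF$ agreeing with the original on $A\otimes\CF$. Functoriality of $\otimes_{max}$ under ucp maps then gives a ucp map $\Phi\otimes\id:L\otimes_{max}\CF\to A^{**}\otimes_{max}\CF$, and the composite
\[
A\underset{max^D_1}\otimes\CF\;\hookrightarrow\;L\underset{max}\otimes\CF\;\xrightarrow{\;\Phi\otimes\id\;}\;A^{**}\underset{max}\otimes\CF
\]
restricts on $A\otimes\CF$ to the canonical inclusion, because $\Phi|_A=\id_A$. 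Combining the isometry just recorded, the contractivity of $\Phi\otimes\id$, and the isometry defining $max^D_1$ shows that the norm of $x$ in $A^{**}\otimes_{max}\CF$, which equals $\|x\|_{max}$, is at most $\|x\|_{max^D_1}$; hence the two norms coincide.

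For the converse I would run The Trick. The hypothesis $max^D_1=max$ says precisely, by the definition of $max^D_1$, that $A\otimes_{max}\CF\hookrightarrow L\otimes_{max}\CF$ is an isometric inclusion of C$^*$-algebras, and so a unital complete order embedding; in particular every state of $A\otimes_{max}\CF$ extends to a state of $L\otimes_{max}\CF$. I must manufacture from this a weak expectation $\Phi:L\to A^{**}$ with $\Phi|_A=\id_A$. Following \cite{La2} and \cite{Oz}, I would fix a surjection $\CF\twoheadrightarrow A$ (using that a unital separable $A$ is generated by countably many unitaries, and reducing the general case to the separable one through a net of separable subalgebras), combine the state extension above with the local lifting property of $\CF$ to produce, for each finite subset of $L$ and each tolerance, a ucp map $L\to A^{**}$ that agrees with $\id_A$ on $A$ up to that tolerance, and finally extract a weak$^*$ cluster point of this net; since the cp maps into $A^{**}$ with these normalizations form a weak$^*$-compact set, the limit is a genuine ucp map that is exactly the identity on $A$.

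I expect the converse to be the main obstacle. The delicate point is that The Trick must produce a map whose range lies in $A^{**}$ and which is exactly the identity on $A$: an Arveson extension alone would only furnish a ucp map into the ambient algebra with no control over its range, so the lifting property of $\CF$ together with the isometry hypothesis are both indispensable. It is worth emphasizing that, in contrast with the classical Theorem~\ref{tensor wri}, no injectivity or WEP of the ambient algebra $L=\mathcal{L}(E^u_D)$ is required: classically the injectivity of $\BB(\HH)$ served only to identify the minimal norm with the restriction of $\BB(\HH)\otimes_{max}\CF$, whereas here that identification is built into the definition of $max^D_1$. Consequently the only special feature used is the local lifting property of $C^*\F_\infty$, and the same argument transfers verbatim to the module setting, as well as to the $D$WEP$_2$ case with $\mathcal{L}^w(E^u_{D^{**}})$ in place of $L$.
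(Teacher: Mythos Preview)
Your forward implication matches the paper's argument essentially verbatim.

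The converse, however, diverges from the paper in a way that matters. The paper's version of \emph{The Trick} is the commutant argument: take the universal representation $A\subset\BB(\HH)$ so that $A''=A^{**}$, choose a $*$-representation $\pi:\CF\to\BB(\HH)$ with $\pi(\CF)''=A'$ (this uses only that $A'$ is generated by unitaries and that $\CF$ is universal for that), multiply to get a representation $A\otimes_{max}\CF\to\BB(\HH)$, and Arveson-extend to $\Psi:L\otimes_{max}\CF\to\BB(\HH)$. The range control you worry about then comes for free from the multiplicative-domain/bimodule property: since $1\otimes\CF$ lies in the multiplicative domain of $\Psi$, the slice $\psi(T)=\Psi(T\otimes 1)$ commutes with $\pi(\CF)$, hence $\psi(T)\in\pi(\CF)'=A''=A^{**}$. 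No lifting property and no approximation are invoked.

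Your sketch points the surjection the wrong way (you take $\CF\twoheadrightarrow A$, whereas what is needed is $\CF\to A'$), and you explicitly set aside the Arveson route on the grounds that it gives no control over the range; that is precisely what the bimodule argument supplies. The LLP/approximation scheme you propose in its place is left vague (it is not clear how ``state extension plus LLP'' manufactures ucp maps $L\to A^{**}$ approximating $\id_A$), and even if it can be made to work it is considerably more indirect than the paper's one-line commutant trick.
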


\begin{proof}
First, suppose $A$ has the $D$WEP$_1$, then $A$ is \emph{r.w.i.} in $\mathcal{L}(E^u_D)$. That is, there exists a ucp map $\varphi : \mathcal{L}(E^u_D) \to A^{**}$ such that $\varphi |_A = \id_A$. 
Then $\varphi \otimes \id$ gives a ucp map from $\mathcal{L}(E^u_D) \otimes_{max} C^*\mathbb{F}_\infty$ to $A^{**}\otimes_{max} C^*\mathbb{F}_\infty$. Therefore the map $\varphi |_A \otimes \id$, defined 
on the algebraic tensor product, extends to a ucp map from
$A\otimes_{max_1^D} C^{*}\mathbb{F}_{\infty}$ to $A^{**}\otimes_{max} C^{*}\mathbb{F}_{\infty}$, whose image is $A\otimes_{max} C^{*}\mathbb{F}_{\infty}$. 

To prove the other direction, suppose $A\otimes_{max_1^D} C^{*}\mathbb{F}_{\infty}=A\otimes_{max} C^{*}\mathbb{F}_{\infty}$, and let $A\subseteq \BB(\mathcal{H})$be the universal representation, i.e. $A''=A^{**}$.
Let $\pi$ be a representation of $C^*\F_\infty$ to $\BB(\mathcal{H})$ with $\pi(C^*\mathbb{F}_\infty)'' = A'$. 
Then there exists a ucp map $A\otimes_{max_1^D} C^{*}\mathbb{F}_{\infty}=A\otimes_{max} C^{*}\mathbb{F}_{\infty} \to  \BB(\mathcal{H})$.
Since $A\otimes _{max_1^D} C^{*}\mathbb{F}_{\infty}\subseteq \mathcal{L}(E^u_D)\otimes_{max} C^{*}\mathbb{F}_{\infty}$, by Arveson's extension theorem, we can extend the above ucp map to 
$\Psi: \mathcal{L}(E^u_D)\otimes_{max} C^{*}\mathbb{F}_{\infty}\to \BB(\mathcal{H})$. Now define a map $\psi$ on $\mathcal{L}(E^u_D)$ by
$\psi (T):=\Psi (T \otimes 1)$, for $T\in \mathcal{L}(E^u_D)$. Then $\psi$ is a ucp extension of $\id_A$. 
Since $\Psi$ is a $C^*\mathbb{F}_\infty$-bimodule map, we have $\psi(T) \pi (x) = \Psi(T\otimes x) = \pi(x)\psi(T)$ for $T\in \mathcal{L}(E^u_D)$ and $x\in C^*\mathbb{F}_\infty$,
i.e., $\psi(T) \in \pi(C^*\mathbb{F}_\infty)' = A^{**}$. This completes the proof.
\end{proof}

It is natural to explore the relationship between $D$WEP$_1$ and $D$WEP$_2$. We have the following.

\begin{theorem}\label{12}
If a C$^*$-algebra $A$ has the $D$WEP$_1$, then it also has the $D$WEP$_2$.
\end{theorem}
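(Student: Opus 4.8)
The plan is to exploit the concrete descriptions of the universal objects in the two classes together with the transitivity of relative weak injectivity and Corollary \ref{ma}. Recall that by the remark following Lemma \ref{wri wep}, having the $D$WEP$_1$ may be taken to mean being \emph{r.w.i.} in some $\mathcal{L}(\HH_D)$, and having the $D$WEP$_2$ to mean being \emph{r.w.i.} in some $\BB(\HH)\bar\otimes D^{**}$. So it suffices to produce a single chain of inclusions connecting these two algebras along which relative weak injectivity propagates.

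First I would reduce to the universal object in $\C_1$: if $A$ has the $D$WEP$_1$, then $A$ is \emph{r.w.i.} in some $\mathcal{L}(E_D)$, and by the absorption theorem together with Remark \ref{universal} this $\mathcal{L}(E_D)$ is itself \emph{r.w.i.} in $\mathcal{L}(\HH_D)$; transitivity (Lemma \ref{wri wri}) then gives that $A$ is \emph{r.w.i.} in $\mathcal{L}(\HH_D)$. The key bridge is the identification $\mathcal{L}(\HH_D)\simeq \mathcal{M}(\K\otimes D)$ from Theorem \ref{mult}, which lets me view $\mathcal{L}(\HH_D)$ as a multiplier algebra and apply the machinery of Section 2.3.

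Next I would invoke Corollary \ref{ma} with the C$^*$-algebra $\K\otimes D$: it yields that $\mathcal{M}(\K\otimes D)=\mathcal{L}(\HH_D)$ is \emph{r.w.i.} in $(\K\otimes D)^{**}$. Using the standard identification $(\K\otimes D)^{**}=\BB(\HH)\bar\otimes D^{**}$ (as already used in Example \ref{kd}), this says precisely that $\mathcal{L}(\HH_D)$ is \emph{r.w.i.} in $\BB(\HH)\bar\otimes D^{**}$. Now I have the chain $A\subseteq \mathcal{L}(\HH_D)\subseteq \BB(\HH)\bar\otimes D^{**}$, with $A$ \emph{r.w.i.} in $\mathcal{L}(\HH_D)$ and $\mathcal{L}(\HH_D)$ \emph{r.w.i.} in $\BB(\HH)\bar\otimes D^{**}$. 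A second application of transitivity (Lemma \ref{wri wri}) gives that $A$ is \emph{r.w.i.} in $\BB(\HH)\bar\otimes D^{**}$. Finally, by Remark \ref{e} (with the projection equal to the identity) the algebra $\BB(\HH)\bar\otimes D^{**}$ is of the form $\mathcal{L}^w(E_{D^{**}})$ and hence belongs to $\C_2$, so $A$ has the $D$WEP$_2$.

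The genuinely substantial ingredient is Corollary \ref{ma}, i.e.\ that the multiplier algebra $\mathcal{M}(\K\otimes D)$ embeds weakly injectively into its bidual; this rests on Kirchberg's factorization result Theorem \ref{factor1} and is where all the real work sits. By contrast, the remaining steps are bookkeeping: the two identifications $\mathcal{L}(\HH_D)\simeq \mathcal{M}(\K\otimes D)$ and $(\K\otimes D)^{**}\simeq \BB(\HH)\bar\otimes D^{**}$ are standard, and transitivity of \emph{r.w.i.} is Lemma \ref{wri wri}. The main point to verify carefully is that the inclusion $\mathcal{L}(\HH_D)\subseteq \BB(\HH)\bar\otimes D^{**}$ used in the chain is exactly the canonical inclusion $\mathcal{M}(\K\otimes D)\subseteq(\K\otimes D)^{**}$ appearing in Corollary \ref{ma}, so that the two transitivity steps genuinely compose into a single r.w.i.\ of $A$ in an object of $\C_2$.
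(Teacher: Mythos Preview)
Your proof is correct and in fact takes a slightly more direct route than the paper's. Both arguments ultimately rest on Kirchberg's factorization (Theorem~\ref{factor1}) and the identification $\mathcal{L}(\HH_D)\simeq\mathcal{M}(\mathcal{K}\otimes D)$, but they package the factorization differently. You invoke Corollary~\ref{ma} to get that $\mathcal{M}(\mathcal{K}\otimes D)$ is \emph{r.w.i.} in $(\mathcal{K}\otimes D)^{**}=\BB(\HH)\bar\otimes D^{**}$, and then finish by two applications of transitivity (Lemma~\ref{wri wri}). The paper instead first develops the tensor characterization (Theorem~\ref{tensor wep}), uses it to prove Lemma~\ref{ucp}, combines this with Lemma~\ref{prod wep} to obtain Corollary~\ref{md} (that $D$WEP$_i$ passes to multiplier algebras), and then applies Corollary~\ref{md} to $\mathcal{K}\otimes D$---which has the $D$WEP$_2$ by Example~\ref{kd}---to conclude that $\mathcal{L}(\HH_D)$ has the $D$WEP$_2$; finally Lemma~\ref{wri wep} transfers this to $A$. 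Your route avoids the tensor characterization entirely and needs only the \emph{r.w.i.} formalism from Section~2, which is a cleaner path to this particular statement; on the other hand, the paper's detour through Corollary~\ref{md} yields a reusable tool (stability of $D$WEP$_i$ under passing to multiplier algebras) that is used again later, e.g.\ in Proposition~\ref{tran} and Remark~\ref{vn wep}.
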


In fact, the converse of the above theorem is not true, and we will give a counterexample in Section 5. 

To prove the above theorem, we need following lemmas.

\begin{lemma}
 \label{ucp}
Suppose that the identity map on a C$^*$-algebra $A$ factors through a C$^*$-algebra $B$ approximately via ucp maps in point-norm topology, i.e. there exist 
two nets of ucp maps $\phi_i: A \to B $ and $\psi_i: B \to A$, such that $\|\psi_i \circ \phi_i (x) - x \| \to 0$ for $x\in A$. 
If $B$ has the $D$WEP$_i$, then so does 
$A$. 
\end{lemma}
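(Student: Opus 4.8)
The plan is to prove Lemma~\ref{ucp} by transporting the relative weak injectivity of $B$ back to $A$ through the approximate factorization, using a limit/compactness argument to extract an honest ucp map into $A^{**}$. First I would fix the data: since $B$ has the $D$WEP$_i$, there is a pair of inclusions $B \subseteq X$ in $\mathcal{C}_i$ together with a ucp map $\Phi: X \to B^{**}$ satisfying $\Phi|_B = \id_B$. I also have the two nets $\phi_i: A \to B$ and $\psi_i: B \to A$ of ucp maps with $\|\psi_i \circ \phi_i(x) - x\| \to 0$ for every $x \in A$. The natural thing is to realize $A$ (via $\phi_i$) inside the same ambient object $X$ so that I can apply $\Phi$, then push the result back into $A^{**}$ using $\psi_i^{**}$ and take a limit.

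The key steps, in order, are as follows. I would first extend the ucp maps to the double duals: $\phi_i$ extends to a normal ucp map $\phi_i^{**}: A^{**} \to B^{**}$ and $\psi_i$ extends to a normal ucp map $\psi_i^{**}: B^{**} \to A^{**}$. Composing $X \xrightarrow{\Phi} B^{**} \xrightarrow{\psi_i^{**}} A^{**}$ with the inclusion $A \xrightarrow{\phi_i} B \subseteq X$, I obtain for each $i$ a ucp map $\Theta_i := \psi_i^{**} \circ \Phi|_{\text{restricted appropriately}}: X \to A^{**}$ whose restriction to $A$ is $\psi_i^{**} \circ \Phi \circ \phi_i = \psi_i \circ \phi_i$, which converges in norm to $\id_A$ on $A$. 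The crucial point is that I need a single ambient $X'$ in $\mathcal{C}_i$ containing $A$ on which all the $\Theta_i$ are defined; I would take $X' = X$ after arranging $A \subseteq X$ via one of the universal objects from the preceding lemma, or more cleanly, work with the universal object $X^u_A$ in $\mathcal{C}_i$ associated to $A$ and use that each $\phi_i$ factors through it. Once I have ucp maps $\Theta_i: X' \to A^{**}$ with $\Theta_i|_A \to \id_A$ in point-norm, I extract a point-weak$^*$ cluster point $\Theta := \lim_i \Theta_i$ in the space of ucp maps from $X'$ to $A^{**}$, which is compact in the point-weak$^*$ topology by Banach--Alaoglu applied to the unit ball of bounded maps. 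The limit $\Theta$ is ucp (this is preserved under point-weak$^*$ limits) and satisfies $\Theta|_A = \id_A$, exactly the relative weak injectivity of $A$ in $X'$, so $A$ has the $D$WEP$_i$.

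I expect the main obstacle to be organizing the ambient algebras so that a \emph{single} $X'$ in the class $\mathcal{C}_i$ simultaneously contains $A$ and receives all the compositions $\Phi \circ \phi_i$. The approximate factorization lives at the level of $A$ and $B$ separately, but relative weak injectivity is a statement about a fixed inclusion into a fixed $\mathcal{L}(E_D)$ (or $\mathcal{L}^w(E_{D^{**}})$). The resolution should be the universal object constructed in the lemma immediately preceding this statement: since that object $X^u$ receives a ucp map from any admissible inclusion and the class $\mathcal{C}_i$ is stable under the relevant operations (direct sums of Hilbert modules stay in the class, and $\mathcal{L}(E^u_D)$ embeds compatibly), I can funnel all the maps $\phi_i: A \to B \subseteq X_B$ through the universal $X^u_A$ for $A$, on which the weak$^*$ limit is taken. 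A secondary technical point is verifying that the point-weak$^*$ limit of ucp maps into the von Neumann algebra $A^{**}$ is again ucp and that unitality survives the limit; this is standard since unitality and complete positivity are both closed conditions in the point-weak$^*$ topology on a bounded set of maps into a von Neumann algebra. With these two points handled, the remaining verifications are routine.
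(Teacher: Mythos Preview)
Your approach is correct but differs from the paper's. The paper proves this lemma via the tensor-product characterization (Theorem~\ref{tensor wep}): since $\phi_i$ and $\psi_i$ are ucp, the maps $\phi_i\otimes\id$ and $\psi_i\otimes\id$ are continuous for both the $max^D_i$ and $max$ norms, so one has contractions
\[
A\underset{max^D_i}\otimes C^*\F_\infty \xrightarrow{\phi_i\otimes\id} B\underset{max^D_i}\otimes C^*\F_\infty = B\underset{max}\otimes C^*\F_\infty \xrightarrow{\psi_i\otimes\id} A\underset{max}\otimes C^*\F_\infty
\]
whose composition converges to the identity on the algebraic tensor product, forcing $\|\cdot\|_{max^D_i}=\|\cdot\|_{max}$ there. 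Your argument instead works directly from the definition: you extend each $\phi_i\colon A\to B\subseteq X$ to a ucp map $\tilde\phi_i\colon X^u_A\to X$, form $\Theta_i=\psi_i^{**}\circ\Phi\circ\tilde\phi_i\colon X^u_A\to A^{**}$, and pass to a point-weak$^*$ cluster point to obtain $\Theta|_A=\id_A$. This avoids the tensor machinery entirely and is closer in spirit to the classical \emph{r.w.i.} arguments; the paper's route is shorter once Theorem~\ref{tensor wep} is available and sidesteps the issue of organizing a single ambient $X'$. One small clarification: the extension property you invoke is not literally the universal-object lemma (which concerns inclusions and identity on $A$), but rather the KSGNS-based categorical property established in the paragraph immediately following it, namely that any ucp map $A\to\mathcal{L}(E_D)$ extends to a ucp map $\mathcal{L}^A(E^u_D)\to\mathcal{L}(E_D)$; with that in hand your argument goes through as written.
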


\begin{proof} Following Kirchberg's method, it suffices to show 
$A\otimes_{max_i^D} C^{*}\mathbb{F}_{\infty} = A\otimes_{max} C^{*}\mathbb{F}_{\infty}$. Since we have ucp maps 
$\phi_i: A \to B $ and $\psi_i: B \to A$, such that $\|\psi_i \circ \phi_i (x) - x \| \to 0$ for $x\in A$, we have ucp maps
$\phi_i \otimes \id : A\otimes_{max_i^D} C^{*}\mathbb{F}_{\infty} \to B \otimes_{max_i^D} C^{*}\mathbb{F}_{\infty}$ and ucp maps
$\psi_i \otimes \id : B\otimes_{max} C^{*}\mathbb{F}_{\infty} \to A \otimes_{max} C^{*}\mathbb{F}_{\infty}$. 
Since $B$ has the $D$WEP$_i$, by Theorem ~\ref{tensor wep}, 
we have $B\otimes_{max_i^D} C^{*}\mathbb{F}_{\infty} = B\otimes_{max} C^{*}\mathbb{F}_{\infty}$. Therefore we have ucp maps 
$A\otimes_{max_i^D} C^{*}\mathbb{F}_{\infty} \to A\otimes_{max} C^{*}\mathbb{F}_{\infty}$ defined by the composition of the maps according to the following diagram
\[
A\underset{max^D_i}\otimes C^{*}\mathbb{F}_{\infty}\overset{\phi_i\otimes \id}\longrightarrow B \underset{max^D_i}\otimes C^{*}\mathbb{F}_{\infty}=B \underset{max}\otimes C^{*}\mathbb{F}_{\infty}\overset{\psi_i\otimes \id}\longrightarrow A\underset{max}\otimes C^{*}\mathbb{F}_{\infty}.
\]
This net of maps converges to the identity. Hence we get the result.  
\end{proof}

Another lemma we need is that the $D$WEP$_i$ property is preserved under the direct product.

\begin{lemma}
 \label{prod wep}
If $(A_i)_{i \in I}$ is a net of $C^*$-algebras with the $D$WEP$_i$, then $\prod _{i\in I} A_i$ has the $D$WEP$_i$.
\end{lemma}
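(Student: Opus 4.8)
The plan is to reduce the statement to the two structural facts already available: that products preserve relative weak injectivity (Lemma \ref{prod wri}) and that relative weak injectivity is transitive (Lemma \ref{wri wri}). Fix the index $i\in\{1,2\}$ of the notion $D$WEP$_i$, and---to avoid the clash with the net index---rename the net to $(A_\lambda)_{\lambda\in I}$. By hypothesis each $A_\lambda$ has the $D$WEP$_i$, so there is an inclusion $A_\lambda\subseteq X_\lambda$ in $\mathcal{C}_i$ with $A_\lambda$ relatively weakly injective in $X_\lambda$. Lemma \ref{prod wri} then gives at once that $\prod_\lambda A_\lambda$ is \emph{r.w.i.} in $\prod_\lambda X_\lambda$. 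It therefore remains only to exhibit a single object $X\in\mathcal{C}_i$ containing $\prod_\lambda X_\lambda$ with $\prod_\lambda X_\lambda$ \emph{r.w.i.} in $X$; transitivity will then yield that $\prod_\lambda A_\lambda$ is \emph{r.w.i.} in $X\in\mathcal{C}_i$, which is exactly the $D$WEP$_i$.

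For the first class ($i=1$) we have $X_\lambda=\mathcal{L}(E_\lambda)$ for Hilbert $D$-modules $E_\lambda$. I would form the Hilbert $D$-module direct sum $E:=\bigoplus_\lambda E_\lambda$, the completion of the algebraic direct sum under $\langle (x_\lambda),(y_\lambda)\rangle=\sum_\lambda\langle x_\lambda,y_\lambda\rangle$; this is again a Hilbert $D$-module, so $\mathcal{L}(E)\in\mathcal{C}_1$. Sending a bounded family $(T_\lambda)_\lambda$ to the operator acting coordinatewise embeds $\prod_\lambda\mathcal{L}(E_\lambda)$ as the block-diagonal subalgebra of $\mathcal{L}(E)$, with each coordinate projection $p_\lambda\in\mathcal{L}(E)$. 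The compression $\Phi:\mathcal{L}(E)\to\prod_\lambda\mathcal{L}(E_\lambda)$, $\Phi(T)=(p_\lambda T p_\lambda)_\lambda$, is unital and completely positive, has norm at most $\|T\|$ in each coordinate, and restricts to the identity on the block-diagonal subalgebra; composing with the canonical inclusion $\prod_\lambda\mathcal{L}(E_\lambda)\hookrightarrow(\prod_\lambda\mathcal{L}(E_\lambda))^{**}$ shows $\prod_\lambda\mathcal{L}(E_\lambda)$ is \emph{r.w.i.} in $\mathcal{L}(E)$. With $X=\mathcal{L}(E)$ the reduction of the first paragraph closes this case.

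For the second class ($i=2$) the same template applies with $X_\lambda=\mathcal{L}^w(E_{\lambda,D^{**}})$. Here I would take the self-dual (ultraweakly closed) direct sum $E:=\bigoplus_\lambda^{w}E_{\lambda,D^{**}}$; by the classification of self-dual modules over a von Neumann algebra as ultraweak direct sums of modules $q_\alpha D^{**}$, this direct sum is again self-dual, so $\mathcal{L}^w(E)\in\mathcal{C}_2$. Equivalently, using Remark \ref{e} one writes $\mathcal{L}^w(E_{\lambda,D^{**}})=e_\lambda(\BB(\HH_\lambda)\bar\otimes D^{**})e_\lambda$ and sets $e=\bigoplus_\lambda e_\lambda$ on $\HH=\bigoplus_\lambda\HH_\lambda$, so that $\mathcal{L}^w(E)=e(\BB(\HH)\bar\otimes D^{**})e$ contains the product as its block-diagonal corner, with the compression conditional expectation built exactly as before.

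The main technical point---and the step I expect to require the most care---is precisely this construction of the ambient module: verifying that the coordinatewise action is adjointable and bounded, that the compression $\Phi$ is genuinely ucp and unit-preserving, and, for the second class, that the self-dual direct sum of self-dual $D^{**}$-modules remains self-dual so that $\mathcal{L}^w(E)$ is a legitimate member of $\mathcal{C}_2$. Once these points are in place, the conclusion follows formally by combining Lemma \ref{prod wri} with the transitivity of Lemma \ref{wri wri}.
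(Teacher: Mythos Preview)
Your proposal is correct and follows essentially the same approach as the paper: use Lemma~\ref{prod wri} to get $\prod_\lambda A_\lambda$ \emph{r.w.i.} in $\prod_\lambda \mathcal{L}(E_\lambda)$, then embed this product as the block-diagonal subalgebra of $\mathcal{L}(\bigoplus_\lambda E_\lambda)$ with a conditional expectation given by compression, and conclude by transitivity. The paper only writes out the $i=1$ case and declares the other similar, whereas you spell out the $i=2$ case as well.
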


\begin{proof}
We will prove the result for the $D$WEP$_1$. The proof of the other case is similar. 
 Since each $A_i$ has the $D$WEP$_1$, there exists an inclusion $A_i \subseteq \mathcal{L}((E_i)_D) $ such that $A_i$ is \emph{r.w.i.} in $\mathcal{L}((E_i)_D)$. 
By Lemma ~\ref{prod wri}, $\prod _{i\in I} A_i$ is \emph{r.w.i.} in $\prod_{i\in I} \mathcal{L}((E_i)_D)$. 
Since $\mathcal{L}(\oplus_{i \in I} (E_i)_D)$ contains $\prod_{i\in I} \mathcal{L}((E_i)_D)$ and it has a conditional expectation onto $\prod \mathcal{L}((E_i)_D)$, 
$\prod _{i\in I} A_i$ is also \emph{r.w.i.} in $\mathcal{L}(\oplus_{i\in I} (E_i)_D)$. 
Therefore $\prod _{i\in I} A_i$ has the $D$WEP$_i$.
\end{proof}

Kirchberg\cite{Ki1} shows that for a C$^*$-algebra $A$, the multiplier algebra $\M(A)$ factors through $\ell_{\infty}(A)$ 
approximately by ucp maps (Theorem ~\ref{factor1}). Using this fact, we have the following. 

\begin{corollary}\label{md}
 Suppose that the C$^*$-algebra $A$ has the $D$WEP$_i$, for $i = 1, 2$. Then the multiplier algebra $\M(A)$ also has the $D$WEP$_i$, for $i = 1, 2$.
\end{corollary}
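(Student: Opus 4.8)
The plan is to chain together the three ingredients assembled immediately above: the factorization Theorem~\ref{factor1}, the stability of the $D$WEP$_i$ under direct products (Lemma~\ref{prod wep}), and its stability under approximate ucp factorization (Lemma~\ref{ucp}).

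First I would record that $\ell_\infty(A)$ is nothing but the direct product $\prod_{n\in\N} A_n$ with every factor $A_n = A$. Since $A$ has the $D$WEP$_i$ by hypothesis, each factor $A_n$ does as well, so Lemma~\ref{prod wep} immediately yields that $\ell_\infty(A)$ has the $D$WEP$_i$, for $i=1,2$.

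Next, Theorem~\ref{factor1} tells us that the identity map on $\M(A)$ factors through $\ell_\infty(A)$ approximately via ucp maps in the point-norm topology; that is, there are nets of ucp maps $\varphi_n : \M(A)\to\ell_\infty(A)$ and $\psi_n:\ell_\infty(A)\to\M(A)$ with $\|\psi_n\circ\varphi_n(x)-x\|\to 0$ for every $x\in\M(A)$. This is exactly the hypothesis of Lemma~\ref{ucp} with the roles of $A$ and $B$ played by $\M(A)$ and $\ell_\infty(A)$ respectively. Since $\ell_\infty(A)$ has the $D$WEP$_i$ by the previous step, Lemma~\ref{ucp} concludes that $\M(A)$ has the $D$WEP$_i$, for $i=1,2$, which is the assertion of the corollary.

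There is essentially no hard step here, as the substance has been front-loaded into the preparatory lemmas. The one point deserving a moment's care is the compatibility of the maps produced by Theorem~\ref{factor1} with the hypotheses of Lemma~\ref{ucp}: I must confirm that the factorizing composition $(\id\otimes\ev)\circ(V_n\otimes\id)$ together with the lift $U_{n,f}$ supplies genuinely \emph{ucp} maps (not merely completely contractive ones), so that the point-norm factorization is honestly through ucp maps, and that the convergence $\psi_n\circ\varphi_n\to\id_{\M(A)}$ is upgraded from the dense subalgebra $\pi^{-1}(B)$ appearing in the sketch to all of $\M(A)$ via the usual separable-exhaustion net argument. Once this identification is in place, the corollary follows by the direct appeal to Lemmas~\ref{prod wep} and~\ref{ucp} described above.
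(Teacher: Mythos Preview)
Your proof is correct and follows exactly the same route as the paper: apply Lemma~\ref{prod wep} to obtain the $D$WEP$_i$ for $\ell_\infty(A)$, invoke Theorem~\ref{factor1} for the approximate ucp factorization of $\id_{\M(A)}$ through $\ell_\infty(A)$, and conclude via Lemma~\ref{ucp}. The extra paragraph of caution about the ucp nature of the factorizing maps and the passage from $\pi^{-1}(B)$ to all of $\M(A)$ is a fair remark on the level of detail in Theorem~\ref{factor1}, but the paper itself simply cites that theorem without revisiting those points.
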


\begin{proof}
 By Theorem ~\ref{factor1}, $\M(A)$ factors through $\ell_\infty(A)$ approximately via ucp maps in point-norm topology. 
Since $A$ has the $D$WEP$_i$, $\ell_\infty(A)$ has $D$WEP$_i$ by Lemma ~\ref{prod wep}. Therefore 
by Lemma ~\ref{ucp}, $\M(A)$ also has the $D$WEP$_i$.   
\end{proof}

Now we are ready to see the proof of the theorem.

\begin{proof}
[Proof of Theorem ~\ref{12}]
It suffices to show that $\mathcal{L}(\mathcal{H}_D)$ has the $D$WEP$_2$. Notice that $\mathcal{L}(\mathcal{H}_D) = \M(\mathcal{K}\otimes D)$, and also $\M(\mathcal{K}\otimes D)$ factors through 
$\ell_\infty(\mathcal{K}\otimes D)$ approximately via ucp maps in point-norm topology. Since 
$\mathcal{K}\otimes D$ has the $D$WEP$_2$ by Remark ~\ref{kd}, and hence $\ell_\infty(\mathcal{K}\otimes D)$ by Lemma ~\ref{prod wep}. 
By Corollary ~\ref{md}, $\M(\mathcal{K}\otimes D)$ has the $D$WEP$_2$. 
\end{proof}

\begin{remark}
\label{vn wep}
\normalfont
Note that $D^{**}$WEP$_1$ implies $D$WEP$_2$. Indeed having $D^{**}$WEP$_1$ is equivalent to being \emph{r.w.i.} in $\LL(\HH_{D^{**}})=\M(\K\otimes D^{**})$, and having 
$D$WEP$_2$ is equivalent to being \emph{r.w.i.} in $\BB(\HH)\bar\otimes D^{**}$. Note that $\K\otimes D^{**}$ is \emph{r.w.i.} in $\BB(\HH)\bar\otimes D^{**}$. 
By Corollary ~\ref{md}, we have $\M(\K\otimes D^{**})$ has the $D$WEP$_2$ as well. 
\end{remark}

Now we investigate some properties of module WEP. The first result is that the module WEP is stable under tensoring 
with a nuclear C$^*$-algebra, similar to the classical case.

\begin{proposition}
\label{min wep}
 For a C$^*$-algebra $D$, the following properties hold:
\begin{enumerate}
 \item If a $C^*$-algebra $A$ has the $D$WEP$_1$, and $B$ is a nuclear C$^*$-algebra, then $A\otimes_{min}B$ has the $D$WEP$_1$ as well.
 \item If von Neumann algebras $M$ and $N$ have the $C$WEP$_2$ and $D$WEP$_2$ respectively, then $M \bar{\otimes} N$ 
has the $(C\otimes_{min}D)$WEP$_2$. 
 \end{enumerate}
\end{proposition}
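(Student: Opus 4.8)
The plan is to handle the two parts by different routes: part (1) through the tensor-product characterization of Theorem~\ref{tensor wep}, and part (2) through the concrete model $\BB(\HH)\bar\otimes(\cdot)^{**}$ for the class $\C_2$. For (1), write $C=C^*\F_\infty$; by Theorem~\ref{tensor wep} it suffices to verify the norm identity $(A\otimes_{min}B)\otimes_{max_1^D}C=(A\otimes_{min}B)\otimes_{max}C$. The engine is that a nuclear factor can be slid out of a maximal-type norm. First I would record the classical identity $(A\otimes_{min}B)\otimes_{max}C=(A\otimes_{max}C)\otimes_{min}B$, which follows from associativity of $\otimes_{max}$ together with $A\otimes_{min}B=A\otimes_{max}B$ and $(A\otimes_{max}C)\otimes_{max}B=(A\otimes_{max}C)\otimes_{min}B$, both valid because $B$ is nuclear. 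Next I would prove the analogue for the relative norm, $(A\otimes_{min}B)\otimes_{max_1^D}C=(A\otimes_{max_1^D}C)\otimes_{min}B$. Combining the two identities with the hypothesis $A\otimes_{max_1^D}C=A\otimes_{max}C$ would then give $(A\otimes_{min}B)\otimes_{max_1^D}C=(A\otimes_{max}C)\otimes_{min}B=(A\otimes_{min}B)\otimes_{max}C$, and Theorem~\ref{tensor wep} finishes part (1).

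The main obstacle in (1) is exactly the nuclear-slide identity for $max_1^D$. Unlike the genuine maximal norm, $max_1^D$ is defined relative to a fixed ambient algebra $\mathcal{L}(E^u_D)$ built from the universal Hilbert $D$-module, so I must check that passing from the universal module of $A$ to that of $A\otimes_{min}B$ is compatible with factoring out $B$. I expect to resolve this using the functoriality of $max_1^D$ recorded just before Theorem~\ref{tensor wep}: applying it to the inclusion $A\hookrightarrow A\otimes_{min}B$ and to a slice map $\id\otimes\omega\colon A\otimes_{min}B\to A$ for a state $\omega$ on $B$ produces ucp maps on the $\otimes_{max_1^D}C$ level that squeeze the relative norm of $A\otimes_{min}B$ between the expected bounds, with nuclearity of $B$ forcing the bounds to coincide. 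Establishing this carefully is where the real work lies, since the relative norm is not, a priori, insensitive to the choice of ambient module.

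For (2), I use the convenient model from the Remark after Lemma~\ref{wri wep}: having $C$WEP$_2$ means $M$ is \emph{r.w.i.}\ in $P_1:=\BB(\HH_1)\bar\otimes C^{**}$, and having $D$WEP$_2$ means $N$ is \emph{r.w.i.}\ in $P_2:=\BB(\HH_2)\bar\otimes D^{**}$. Since $M$ and $N$ are von Neumann algebras, \emph{r.w.i.}\ yields genuine weak conditional expectations $\Phi_1\colon P_1\to M$ and $\Phi_2\colon P_2\to N$ (compose the ucp map into the bidual with the normal expectation of the bidual onto the von Neumann algebra). The structural input is the standard identification $(C\otimes_{min}D)^{**}\cong C^{**}\bar\otimes D^{**}$, which gives
\[
P_1\,\bar\otimes\,P_2\;=\;\BB(\HH_1)\,\bar\otimes\,\BB(\HH_2)\,\bar\otimes\,C^{**}\,\bar\otimes\,D^{**}\;=\;\BB(\HH_1\otimes\HH_2)\,\bar\otimes\,(C\otimes_{min}D)^{**},
\]
so that $P_1\bar\otimes P_2$ is precisely a model algebra for $(C\otimes_{min}D)$WEP$_2$ in the class $\C_2$, and it contains $M\bar\otimes N$. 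It then suffices to produce a ucp map $\Phi_1\bar\otimes\Phi_2\colon P_1\bar\otimes P_2\to M\bar\otimes N$ restricting to the identity on $M\bar\otimes N$; this exhibits $M\bar\otimes N$ as \emph{r.w.i.}\ in the model algebra, hence $(C\otimes_{min}D)$WEP$_2$.

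The main obstacle in (2) is constructing the tensor weak expectation $\Phi_1\bar\otimes\Phi_2$ on the von Neumann tensor product. The slice-wise tensor $\Phi_1\otimes\Phi_2$ is ucp on the minimal C$^*$-tensor product $P_1\otimes_{min}P_2$ and maps into $M\otimes_{min}N$, but the $\Phi_i$ are in general non-normal, so extending to $P_1\bar\otimes P_2$ while keeping the image inside $M\bar\otimes N$ (and the identity on $M\bar\otimes N$) requires care. The plan is to extend $\Phi_1\otimes\Phi_2$ to the bidual $(P_1\otimes_{min}P_2)^{**}$, realize $P_1\bar\otimes P_2$ as a central cut-down $z(P_1\otimes_{min}P_2)^{**}$, compose with the normal expectation $(M\otimes_{min}N)^{**}\to M\bar\otimes N$, and check compatibility of the central projections. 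Since the resulting map is already the identity on the weakly dense subalgebra $M\otimes_{min}N$, it is the identity on all of $M\bar\otimes N$, which completes the argument.
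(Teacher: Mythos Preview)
Your treatment of part (2) contains a genuine error: the ``standard identification'' $(C\otimes_{min}D)^{**}\cong C^{**}\bar\otimes D^{**}$ is false in general. For instance, with $C=D=\K$ one gets $(\K\otimes_{min}\K)^{**}\cong\K^{**}\cong\BB(\HH)$, while $\K^{**}\bar\otimes\K^{**}=\BB(\HH)\bar\otimes\BB(\HH)$, and these are not isomorphic. What is true, and what the paper actually uses, is only a one-sided relationship: since $C\otimes_{min}D$ is weak$^*$-dense in $C^{**}\bar\otimes D^{**}$, the latter arises as $z(C\otimes_{min}D)^{**}$ for a central projection $z$, so $C^{**}\bar\otimes D^{**}$ is \emph{r.w.i.}\ in $(C\otimes_{min}D)^{**}$. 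Thus your algebra $P_1\bar\otimes P_2=\BB(\HH_1\otimes\HH_2)\bar\otimes C^{**}\bar\otimes D^{**}$ is not itself a $\C_2$-model algebra for $(C\otimes_{min}D)$WEP$_2$; it is only \emph{r.w.i.}\ in one. Once you replace the false isomorphism by this \emph{r.w.i.}\ step, transitivity (Lemma~\ref{wri wri}) repairs the argument. Note also that the paper's proof glosses over exactly the obstacle you flag, namely producing the tensor weak expectation $\Phi_1\bar\otimes\Phi_2$ on the von Neumann tensor product from non-normal $\Phi_i$; your bidual-plus-central-cutdown plan is a reasonable way to address this, but you should be careful that the resulting map really restricts to the identity on $M\bar\otimes N$ and not just on $M\otimes_{min}N$.

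For part (1) your approach is different from the paper's and more laborious. The paper argues directly from the definition: tensor the weak expectation $\LL(\HH_D)\to A^{**}$ with the weak expectation $\BB(\HH)\to B^{**}$ coming from nuclearity of $B$, observe $\LL(\HH_D)\otimes_{min}\BB(\HH)\cong\LL(\tilde\HH_D)$, and then use exactness of $B$ to push $A^{**}\otimes_{min}B^{**}$ into $(A\otimes_{min}B)^{**}$. This completely sidesteps the issue you identify as the ``main obstacle,'' namely the nuclear-slide identity $(A\otimes_{min}B)\otimes_{max_1^D}C=(A\otimes_{max_1^D}C)\otimes_{min}B$, which is delicate precisely because the universal module $E^u_D$ for $A\otimes_{min}B$ is not the one for $A$. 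Your functoriality-plus-slice-map sketch for that identity is plausible but not yet a proof; the paper's direct route avoids having to make it one.
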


\begin{proof}
  (1) Since $A$ has the $D$WEP$_1$ and $B$ is a nuclear, we have $A$ is \emph{r.w.i.} in $\mathcal{L}(\mathcal{H}_D)$, 
and $B$ is \emph{r.w.i.} in $\BB(\HH)$. Therefore we have ucp maps $A\otimes_{min}B \to \mathcal{L}(\HH_D) \otimes_{min} \BB(\HH) =\mathcal{L}(\tilde{\HH}_D) \to A^{**}\otimes_{min}B^{**}$. 
Note that $B$ is nuclear and hence exact, so the inclusion map $A^{**} \otimes B^{**} \hookrightarrow (A\otimes_{min}B)^{**}$ is min-continuous. Therefore 
$A\otimes_{min} B$ is \emph{r.w.i.} in $\mathcal{L}(\tilde{\HH}_D)$. 

  (2) Since $M$ is \emph{r.w.i.} in $\mathcal{L}^w(\HH_{C^{**}})$ and $N$ is \emph{r.w.i.} in $\mathcal{L}^w(\HH_{D^{**}})$, 
we have ucp maps $M\bar{\otimes}N \to \mathcal{L}^w(\HH_{C^{**}}) \bar{\otimes} \mathcal{L}^w(\HH_{D^{**}}) = \mathcal{L}^w(\HH_{C^{**}\bar{\otimes}D^{**}})\to M\bar{\otimes}N \to (M\bar{\otimes}N)^{**}$. 
Note that $C\otimes_{min} D$ is weak $^*$-dense in $C^{**}\bar{\otimes}D^{**}$. Therefore we have a normal conditional expectation 
$(C\otimes_{min}D)^{**} \to C^{**}\bar{\otimes}D^{**}$, and hence $C^{**}\bar{\otimes}D^{**}$ is \emph{r.w.i.} in $(C\otimes_{min}D)^{**}$. 
Therefore $\mathcal{L}^w(\HH_{C^{**}\bar{\otimes}D^{**}})$ is \emph{r.w.i.} in $\mathcal{L}^w(\HH_{(C\otimes_{min}D)^{**}})$, and hence 
$M\bar{\otimes}N$ is \emph{r.w.i.} in $\mathcal{L}^w(\HH_{(C\otimes_{min}D)^{**}})$.
\end{proof}

As a consequence of Corollary ~\ref{md}, we have the transitivity property of $D$WEP. 

\begin{proposition}\label{tran}
 If $A$ has the $B$WEP$_i$, and $B$ has the $C$WEP$_i$, then $A$ has the $C$WEP$_i$, for $i=1,2$. 
\end{proposition}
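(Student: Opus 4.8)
The plan is to deduce both cases from the transitivity of relative weak injectivity, in the packaged form of Lemma~\ref{wri wep}: since $A$ has the $B$WEP$_i$, $A$ is \emph{r.w.i.} in a canonical parent algebra built out of $B$, so it suffices to show that this parent itself has the $C$WEP$_i$ and then invoke Lemma~\ref{wri wep}. The real work is thus to propagate the $C$WEP$_i$ of $B$ up to the parent, and this is exactly where Corollary~\ref{md} is meant to intervene.

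For $i=1$ I would argue as follows. By the absorption theorem and Remark~\ref{universal} (together with Lemma~\ref{wri wri}), having the $B$WEP$_1$ makes $A$ \emph{r.w.i.} in $\LL(\HH_B)$, which by Theorem~\ref{mult} equals $\M(\K\otimes B)$. First, since $B$ has the $C$WEP$_1$ and the algebra $\K$ of compact operators is nuclear, Proposition~\ref{min wep}(1), applied with the nuclear factor $\K$, shows that $\K\otimes B=B\otimes_{min}\K$ has the $C$WEP$_1$. Next, because $\LL(\HH_B)$ is the multiplier algebra of $\K\otimes B$, Corollary~\ref{md} upgrades this to the statement that $\M(\K\otimes B)=\LL(\HH_B)$ has the $C$WEP$_1$. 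Finally, applying Lemma~\ref{wri wep} to the inclusion $A\subseteq\LL(\HH_B)$ gives that $A$ has the $C$WEP$_1$.

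For $i=2$ the parallel reduction uses Remark~\ref{e} and Example~\ref{kd}: having the $B$WEP$_2$ makes $A$ \emph{r.w.i.} in the von Neumann algebra $\BB(\HH)\bar{\otimes}B^{**}=(\K\otimes B)^{**}$, so it would suffice to prove that this algebra has the $C$WEP$_2$. The natural instrument is Proposition~\ref{min wep}(2): writing the parent as $\BB(\HH)\bar{\otimes}B^{**}$ with $\BB(\HH)$ injective, hence carrying the WEP and so the $\mathbb{C}$WEP$_2$, and noting $\mathbb{C}\otimes_{min}C=C$, one would conclude that the parent has the $C$WEP$_2$, provided the von Neumann algebra $B^{**}$ has the $C$WEP$_2$.

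I expect the main obstacle to be precisely this last proviso, that the $C$WEP$_2$ passes from $B$ to its bidual $B^{**}$. In the first class the corresponding step was furnished by Corollary~\ref{md}, because the parent $\LL(\HH_B)$ is a genuine multiplier algebra to which Kirchberg's factorization through $\ell_\infty(\K\otimes B)$ (Theorem~\ref{factor1}, as used in Theorem~\ref{12}) applies. The second-class parent, by contrast, is a second dual, for which no such multiplier factorization is available. One can still check directly, by tensoring the relative weak injectivity of $\K$ in $\BB(\HH)$ with that of $B$ in $\BB(\HH)\bar{\otimes}C^{**}$, that $\K\otimes B$ itself has the $C$WEP$_2$; but this only reaches the multiplier algebra $\M(\K\otimes B)=\LL(\HH_B)$, whereas $A$ sits in the strictly larger bidual. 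Since for a von Neumann algebra the $\mathbb{C}$WEP$_2$ reduces to injectivity, the bidual inheritance cannot be expected to hold unconditionally, and I would anticipate needing either an additional hypothesis (for instance that $B$ is already a von Neumann algebra, where $B=B^{**}$) or a genuinely different argument to close the second case; pinning down and justifying this step is where the bulk of the effort should go.
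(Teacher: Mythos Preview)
Your argument for $i=1$ is exactly the paper's: from $B$ having the $C$WEP$_1$ one gets that $\K\otimes B$ has the $C$WEP$_1$, then Corollary~\ref{md} pushes this to $\M(\K\otimes B)=\LL(\HH_B)$, and since $A$ is \emph{r.w.i.}\ in $\LL(\HH_B)$ one concludes by Lemma~\ref{wri wep}.

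For $i=2$ the paper does \emph{not} switch to the bidual parent $\BB(\HH)\bar\otimes B^{**}$; it simply repeats the multiplier-algebra argument verbatim, asserting that $A$ is \emph{r.w.i.}\ in $\LL(\HH_B)=\M(\K\otimes B)$ and that the latter has the $C$WEP$_2$ (again by Corollary~\ref{md}). The second claim is fine, since Corollary~\ref{md} covers both $i$, but the first is precisely the gap you are worried about, just relocated: the hypothesis $B$WEP$_2$ only yields $A$ \emph{r.w.i.}\ in $(\K\otimes B)^{**}=\BB(\HH)\bar\otimes B^{**}$, and $\M(\K\otimes B)$ is the \emph{smaller} algebra (it is \emph{r.w.i.}\ in the bidual by Corollary~\ref{ma}, not the other way around), so one cannot simply descend. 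Thus neither your route via Proposition~\ref{min wep}(2) nor the paper's uniform multiplier argument closes the $i=2$ case without the extra step you isolated, namely that the $C$WEP$_2$ passes from $B$ to $B^{**}$.

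Your suspicion that this extra step genuinely fails is confirmed by the paper itself. The Remark immediately following Corollary~\ref{trand} says the WEP$_2$ analogue of that corollary is false (taking $D=\BB(\ell_2)$), yet Corollary~\ref{trand} is exactly the special case $C=\mathbb C$ of the present proposition, since $\mathbb C$WEP$_i$ coincides with the WEP by Proposition~\ref{ncl}. So the obstacle you flagged is real, and the $i=2$ clause of the proposition should not be expected to hold as stated; your diagnosis is sharper here than the paper's own proof.
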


\begin{proof}
Since $B$ has the $C$WEP$_i$, then so does $\mathcal{K}\otimes B$, and hence so does $\M(\mathcal{K}\otimes B)$ by Corollary ~\ref{md}. 
Since $A$ has the $B$WEP$_i$, $A$ is \emph{r.w.i.} in some $\mathcal{L}(\mathcal{H}_B) = \M(\mathcal{K}\otimes B)$. 
By the transitivity of \emph{r.w.i.}, we conclude that $A$ has the $C$WEP$_i$, for $i=1,2$. 
\end{proof}

\begin{corollary}\label{trand}
 If $A$ has the $D$WEP$_1$, and $D$ has the WEP, then $A$ has the WEP. 
\end{corollary}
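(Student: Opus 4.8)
The plan is to realize the classical WEP as a special case of the relative WEP---the case of scalar coefficients $D=\mathbb{C}$---and then to read off the conclusion from the transitivity established in Proposition~\ref{tran}.

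The first step is the identification of $\mathbb{C}$WEP$_1$ with the classical WEP. When $D=\mathbb{C}$, a Hilbert $\mathbb{C}$-module is simply a Hilbert space $\HH$, and since every bounded operator on a Hilbert space is adjointable, $\LL(\HH)=\BB(\HH)$. Thus the class $\C_1$ for $D=\mathbb{C}$ is precisely the collection of inclusions $A\subseteq\BB(\HH)$, and being relatively weakly injective in some such $\BB(\HH)$ is, by the injectivity of $\BB(\HH)$ that already makes the classical WEP representation-independent, exactly the WEP. In particular the hypothesis that $D$ has the WEP is exactly the statement that $D$ has the $\mathbb{C}$WEP$_1$.

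With this translation the result is immediate. I would apply Proposition~\ref{tran} with $B=D$ and $C=\mathbb{C}$: since $A$ has the $D$WEP$_1$ and $D$ has the $\mathbb{C}$WEP$_1$, transitivity yields that $A$ has the $\mathbb{C}$WEP$_1$, that is, $A$ has the WEP. There is no genuine analytic obstacle, since the corollary is a bookkeeping consequence of Proposition~\ref{tran}; the only point that demands care is checking that $\mathbb{C}$WEP$_1$ and WEP coincide---in particular that the ``there exists an inclusion'' formulation of $D$WEP$_1$ matches the representation-independent classical definition once $\LL(\HH)=\BB(\HH)$ has been used.
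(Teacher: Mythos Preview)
Your proof is correct and is essentially the same as the paper's, just packaged differently: you invoke Proposition~\ref{tran} with $C=\mathbb{C}$ after identifying $\mathbb{C}$WEP$_1$ with the WEP, whereas the paper inlines that same argument by directly showing that $\LL(\HH_D)=\M(\K\otimes D)$ has the WEP (since $D$ has WEP $\Rightarrow$ $\K\otimes D$ has WEP $\Rightarrow$ $\M(\K\otimes D)$ has WEP) and then using that $A$ is \emph{r.w.i.} in it. The underlying content is identical.
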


\begin{proof}
 It suffices to show that $\LL(\HH_D)=\M(\K\otimes D)$ has the WEP. This is obvious since if $D$ has the WEP, then so does $\K\otimes D$ and hence $\M(\K\otimes D)$ has the WEP.
\end{proof}

\begin{remark}
 \normalfont
The previous result is not necessarily true for the WEP$_2$ case, since $\BB(\ell_2)\bar \otimes D^{**}$ may not have the WEP, for instance for $D=\BB(\ell_2)$. See Example ~\ref{Bl2} for the proof.
\end{remark}

In \cite{Ju96}, Junge shows the following finite dimensional characterization of the WEP. 

\begin{theorem}
 The C$^*$-algebra $A$ has the WEP if and only if for arbitrary finite dimensional subspaces $F\subset A$ and $G\subset A^*$, 
and $\e > 0$, there exist matrix algebra $M_m$ and ucp maps $u: F\to M_m$, $v: M_m \to A/G^{\perp}$, such that 
\[
 \|v\circ u-q_G \circ \iota_F\|<\e,
\]
where $\iota_F: F\to A$ is the inclusion map and $q_G: A \to A/G^{\perp}$ is the quotient map.
\end{theorem}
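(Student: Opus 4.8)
The plan is to prove both implications by mediating between the finite-dimensional factorization property and a weak expectation $\BB(\HH)\to A^{**}$ through the bidual, using Lance's tensor criterion (Theorem~\ref{tensor wri}) and the semidiscreteness of $\BB(\HH)$ as the two external inputs. Throughout I read $F$ as a finite-dimensional operator subsystem of $A$ and take $G\subseteq A^*$ finite-dimensional and selfadjoint, so that $A/G^\perp$ is a finite-dimensional operator system, $(A/G^\perp)^*=G$, and the restriction-of-functionals map $P_G:A^{**}\to A/G^\perp$ is a normal ucp surjection extending $q_G$.

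For the easier direction, factorization $\Rightarrow$ WEP, I would build a weak expectation directly by an ultraproduct. Fix a faithful representation $A\subseteq\BB(\HH)$, direct the set of triples $\lambda=(F,G,\e)$ in the obvious way, and fix an ultrafilter $\mathcal U$ refining this net. For each $\lambda$ the hypothesis gives ucp maps $u_\lambda:F\to M_{m_\lambda}$ and $v_\lambda:M_{m_\lambda}\to A/G^\perp$ with $\|v_\lambda u_\lambda-q_G\iota_F\|<\e$. I would extend each $u_\lambda$ to a ucp map $\tilde u_\lambda:\BB(\HH)\to M_{m_\lambda}$ by Arveson's extension theorem and assemble $\tilde U:\BB(\HH)\to\prod_{\mathcal U}M_{m_\lambda}$, $\tilde U(T)=(\tilde u_\lambda(T))_{\mathcal U}$. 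On the target side, since $(A/G^\perp)^*=G$ sits compatibly inside $A^*$ as $G$ grows, the family $(v_\lambda)$ assembles into a ucp map $v_\infty:\prod_{\mathcal U}M_{m_\lambda}\to A^{**}$ via $\langle v_\infty((x_\lambda)_{\mathcal U}),g\rangle=\lim_{\lambda\to\mathcal U}\langle v_\lambda(x_\lambda),g\rangle$ for $g\in A^*$ (the pairing is eventually defined because eventually $g\in G$). Setting $\Phi=v_\infty\circ\tilde U$, one checks $\Phi$ is ucp and, using $\|v_\lambda u_\lambda-q_G\iota_F\|\to0$ along $\mathcal U$, that $\langle\Phi(a),g\rangle=g(a)$ for all $a\in A$ and $g\in A^*$; hence $\Phi|_A=\id_A$ and $A$ is r.w.i. in $\BB(\HH)$, i.e. $A$ has the WEP.

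For the converse, WEP $\Rightarrow$ factorization, I would start from a weak expectation $\Phi:\BB(\HH)\to A^{**}$ with $\Phi|_A=\id_A$ together with the fact that $\BB(\HH)$, being injective, is semidiscrete: there are ucp maps $s_\alpha:\BB(\HH)\to M_{m_\alpha}$ and $t_\alpha:M_{m_\alpha}\to\BB(\HH)$ with $t_\alpha s_\alpha\to\id_{\BB(\HH)}$ in the point-ultraweak topology. The natural candidates are $u=s_\alpha|_F$ and $v=P_G\circ\Phi\circ t_\alpha$, both ucp, with $vu=P_G\Phi(t_\alpha s_\alpha)|_F$ landing in the finite-dimensional $A/G^\perp$, where weak$^*$ and norm topologies agree and pointwise convergence on the finite-dimensional $F$ is automatically uniform. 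The main obstacle is exactly here: the weak expectation $\Phi$ is in general \emph{not} normal, so one cannot pass the point-ultraweak convergence $t_\alpha s_\alpha\to\id$ through $\Phi$ (the singular part of $g\circ\Phi\in\BB(\HH)^*$ obstructs $\langle\Phi(t_\alpha s_\alpha a),g\rangle\to\langle a,g\rangle$). I expect this to be the crux of the theorem.

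To circumvent the non-normality I would not argue by naive composition but reduce to Lance's criterion by Hahn--Banach duality. The set $\mathcal S=\{vu: u:F\to M_m,\ v:M_m\to A/G^\perp \text{ ucp}\}$ is convex in the finite-dimensional space $\mathcal B(F,A/G^\perp)$ (convexity via block-diagonal $u$ and convex combinations of the $v$'s). If $q_G\iota_F\notin\overline{\mathcal S}$, one separates it by an element $z=\sum_i f_i\otimes g_i\in F\otimes G\subseteq A\otimes A^*$, giving $\Real\sum_i g_i(f_i)>\sup_{w\in\mathcal S}\Real\sum_i\langle w(f_i),g_i\rangle$. The plan is to identify the left-hand value with a ``max''-type quantity attached to $z$ and the supremum on the right with the corresponding ``min''-type quantity (the latter because optimizing over ucp factorizations through matrices computes a minimal/nuclear tensor expression), so that the strict inequality contradicts $A\otimes_{max}C^*\F_\infty=A\otimes_{min}C^*\F_\infty$, which WEP furnishes via Theorem~\ref{tensor wri}. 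Setting up this dictionary between the two extremal values and the two C$^*$-tensor norms is the technical heart; once it is in place the separating $z$ cannot exist, so $q_G\iota_F\in\overline{\mathcal S}$, which is precisely the desired approximate factorization.
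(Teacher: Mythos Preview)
The paper does not prove this theorem; it cites it from Junge \cite{Ju96} and then proves the $D$WEP analogue immediately after. Comparing your proposal to that analogue proof: your ``$\Leftarrow$'' direction is essentially the same idea (assemble the maps over the net $(F,G,\e)$ into a product and take a weak$^*$-limit on the $v$-side to land in $A^{**}$), so that part is fine.

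For ``$\Rightarrow$'' your approach diverges substantially from the paper's, and the second proposal has a genuine gap. The paper (for the $D$WEP version, and Junge's original argument follows the same pattern) does not go through Lance's tensor criterion at all. Instead it embeds $A$ relatively weakly injectively into $\prod_I M_{m(i)}$ (the classical case of $\mathcal L(\HH_D)$ with $D=\mathbb C$), dualizes to get $A^*\hookrightarrow \bigl(\bigoplus_{\ell_1}S_1^{m(i)}\bigr)^{**}$, and then invokes the \emph{local reflexivity principle} to replace, on the finite-dimensional $G$, the map into the bidual by a nearby map into $\bigoplus_{\ell_1}S_1^{m(i)}$ itself; an Auerbach-basis argument truncates to a finite $I_0$, and taking adjoints produces the desired $u=\sigma_{I_0}$ and $v=(\alpha^\e_{I_0})^*$. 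This is exactly the mechanism that sidesteps the non-normality obstacle you correctly identified: local reflexivity at the predual level is what lets one pass from the bidual back down.

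Your Hahn--Banach route, by contrast, is left at the level of a wish: you separate by $z=\sum_i f_i\otimes g_i\in F\otimes G\subset A\otimes A^*$ and then assert that the two sides should be identified with ``max'' and ``min'' quantities so as to contradict $A\otimes_{\max}C^*\F_\infty=A\otimes_{\min}C^*\F_\infty$. But $z$ lives in $A\otimes A^*$, not in $A\otimes C^*\F_\infty$, and you give no mechanism for transporting the inequality to a tensor with $C^*\F_\infty$; nor is it clear that $\sup_{vu\in\mathcal S}\Real\sum_i\langle vu(f_i),g_i\rangle$ computes any minimal tensor norm of $z$ (optimizing ucp factorizations through matrices is tied to nuclearity of maps, not to the min norm on $A\otimes A^*$). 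Without that dictionary the separation argument does not close. If you want to salvage a duality proof, the right tool is local reflexivity applied to $G\subset A^*$ inside $\bigl(\bigoplus S_1^{m(i)}\bigr)^{**}$, which is precisely the paper's line.
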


We have a similar result for the module WEP as follows.

\begin{theorem}
The C$^*$-algebra $A$ has the $D$WEP$_1$ if and only if for arbitrary finite dimensional subspaces $F\subset A$ and $G\subset A^*$, 
and $\e > 0$, there exist matrix algebra $M_m(D)$ and ucp maps $u: A\to M_m(D)$, $v:M_m(D) \to A/G^{\perp}$, such that 
\[
 \|v\circ u|_F-q_G \circ \iota_F\|<\e,
\]
where $\iota_F: F\to A$ is the inclusion map and $q_G: A \to A/G^{\perp}$ is the quotient map.
\end{theorem}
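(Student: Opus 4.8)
The plan is to mirror the proof of Junge's classical characterization stated above, systematically replacing $\BB(\HH)$ by $\LL(\HH_D)=\M(\K\otimes D)$ and the matrix algebras $M_m$ by $M_m(D)=M_m\otimes D$. The first observation I would record is that $M_m(D)$ is precisely the corner $(p\otimes 1)(\K\otimes D)(p\otimes 1)$ cut out by a rank-$m$ projection $p\in\K$, so the compression $\K\otimes D\to M_m(D)$ exhibits $M_m(D)$ as relatively weakly injective in $\K\otimes D$. Since $\K\otimes D$ has the $D$WEP$_1$ by Example~\ref{kd}, Lemma~\ref{wri wep} shows $M_m(D)$ has the $D$WEP$_1$ as well; equivalently, by Theorem~\ref{tensor wep}, $M_m(D)\otimes_{max_1^D}C^*\F_\infty=M_m(D)\otimes_{max}C^*\F_\infty$. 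This equality is the module substitute for the nuclearity of $M_m$ that drives the classical argument, and I would run both directions through the tensor-product characterization of Theorem~\ref{tensor wep}.

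For the converse direction, assume the local factorization property and fix $x$ in the algebraic tensor product $A\odot C^*\F_\infty$; since $max_1^D\le max$ always, it suffices to prove $\|x\|_{max}\le\|x\|_{max_1^D}$. Choosing $F\subset A$ to contain the left legs of $x$ and applying the hypothesis, the functoriality of the $max_1^D$ norm under ucp maps (established when that norm was defined) gives $\|(u\otimes\id)x\|_{max}=\|(u\otimes\id)x\|_{max_1^D}\le\|x\|_{max_1^D}$, where the first equality is the $D$WEP$_1$ of $M_m(D)$ noted above. Pushing forward through the ucp map $v\otimes\id$ and letting $G$ increase so that $q_G\to\id_A$ pointwise and hence $\|(q_G\otimes\id)x\|_{max}\to\|x\|_{max}$, the estimate $\|v\circ u|_F-q_G\circ\iota_F\|<\e$ yields $\|x\|_{max}\le\|x\|_{max_1^D}$. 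Thus the two norms agree and $A$ has the $D$WEP$_1$. The only delicate bookkeeping here is that $A/G^\perp$ is merely an operator space, so the final limit over $G$ must be carried out at the level of states, exactly as in the classical proof.

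For the forward direction, assume $A$ has the $D$WEP$_1$ and fix $F$, $G$, $\e$. I would argue by Hahn--Banach separation: the set of all maps $F\to A/G^\perp$ of the form $v\circ u|_F$, with $u:A\to M_m(D)$ and $v:M_m(D)\to A/G^\perp$ ucp and $m$ arbitrary, is convex, and if its norm-closure did not contain $q_G\circ\iota_F$, then a separating functional would, under the operator-space duality between factorization norms and tensor norms, produce an element on which $\|\cdot\|_{max}$ strictly exceeds $\|\cdot\|_{max_1^D}$, contradicting the equality furnished by Theorem~\ref{tensor wep}. A more hands-on route would be to compose the weak expectation $\varphi:\LL(\HH_D)\to A^{**}$ coming from the $D$WEP$_1$ with the canonical finite-rank quotient $A^{**}\to A/G^\perp$ dual to the inclusion $G\hookrightarrow A^*$, precomposed with a compression of $A$ into a corner $M_m(D)$, so as to write down candidate maps $u,v$ explicitly.

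The main obstacle is precisely this forward direction, and specifically the passage to a \emph{single} finite matrix $M_m(D)$. The weak expectation $\varphi$ is in general non-normal, so the hands-on compression approach stumbles: the compressions $(p\otimes 1)a(p\otimes 1)$ converge to $a$ only strictly, and a singular summand of $\varphi^*g$ need not be continuous for that convergence. This is exactly why the reduction to the finite-dimensional quotient $A/G^\perp$, where only the pairing against the finite-dimensional $G$ is tested, is indispensable, and why I expect the clean argument to be the separation/duality one rather than a direct limit. I anticipate that verifying that a single matrix size suffices (convexity together with an approximation step) and identifying the separating functional with a genuine witness to $\|\cdot\|_{max}>\|\cdot\|_{max_1^D}$ will be the technical heart of the proof.
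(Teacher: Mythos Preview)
Your converse direction ($\Leftarrow$) is correct in spirit but takes a different route from the paper. You run the argument through the tensor characterization of Theorem~\ref{tensor wep}, which forces you to confront the operator-space quotient $A/G^\perp$ and the attendant bookkeeping you flag. The paper sidesteps this entirely: it takes the product of all the $u$'s to obtain a single ucp map $A\to\prod_{(F,G,\e)}M_m(D)$, takes a weak$^*$ cluster point of the $v$'s to obtain $\prod_{(F,G,\e)}M_m(D)\to A^{**}$, and observes that the composition is the identity on $A$. Thus $A$ is \emph{r.w.i.}\ in a product of matrix algebras $M_m(D)$, which has the $D$WEP$_1$ by Lemma~\ref{prod wep}, and one concludes by Lemma~\ref{wri wep}. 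This is shorter and avoids the state-level limit you anticipate.

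For the forward direction ($\Rightarrow$) you correctly locate the difficulty---descending to a single finite $M_m(D)$---but you do not supply the tool that resolves it. The paper's argument is concrete and different from both of your suggestions. Starting from the chain of \emph{r.w.i.}\ inclusions $A\hookrightarrow\LL(\HH_D)\hookrightarrow\prod_I M_{m(i)}(D)\hookrightarrow\prod_I M_{m(i)}(D^{**})$, one dualizes to obtain an embedding of $A^*$ into $\bigl(\prod_I M_{m(i)}(D^{**})\bigr)^* = \ell_1^I(\mathcal S_1^{m(i)}(D^*))^{**}$. The key step is now the \emph{local reflexivity principle}: the image of the finite-dimensional $G\subset A^*$ in this bidual can be $\e$-approximated, on the finitely many test vectors $\sigma_I(F)$, by a map $\alpha^\e_I$ landing in the predual $\ell_1^I(\mathcal S_1^{m(i)}(D^*))$ itself. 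An Auerbach-basis argument then cuts the index set down to a finite $I_0$, and one sets $u=\sigma_{I_0}$ and $v=(\alpha^\e_{I_0})^*$. Your Hahn--Banach separation proposal is morally the dual of this manoeuvre, but you leave it as a slogan; local reflexivity is precisely the ingredient that converts it into an argument, and without naming it your plan for $(\Rightarrow)$ remains a sketch rather than a proof.
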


For the $D$WEP$_2$ case, we will replace the matrix algebra $M_m(D)$ by $M_m(D^{**})$.

\begin{proof}
 $\Leftarrow$: From the assumption, we get a net of maps $u$ and $v$ over $(F, G, \e)$. Taking the direct product of all such $u$, and 
one w$^*$-limit of $v$, we have ucp maps 
$A \to \Pi_{(F,G,\e)} M_m(D) \to A^{**}$, whose composition is identity on $A$, and hence $A$ is \emph{r.w.i.} in $\Pi_{(F,G,\e)} M_m(D)$. By Lemma ~\ref{prod wep}, $\Pi_{(F,G,\e)} M_m(D)$ has the $D$WEP$_1$ since 
$M_m(D)$ does. Therefore $A$ has the $D$WEP$_1$. 

 $\Rightarrow$: $A$ has the $D$WEP$_1$, and hence we have $A \to \mathcal{L}(\HH_D) \to A^{**}$. Let $\sigma_I$ be the composition of the inclusion maps
\[ 
A\hookrightarrow \mathcal{L}(\HH_{D})\hookrightarrow\Pi_I (M_{m(i)}(D))\hookrightarrow\Pi_{I}(M_{m(i)}(D^{**})).
\]
Note that each of the inclusions above is \emph{r.w.i.}. By taking the duals, we have
\[
 \varphi_I: A^* \overset{r.w.i.}\hookrightarrow \mathcal{L}(\HH_{D})^* \overset{r.w.i.}\hookrightarrow \Pi_I (M_{m(i)}(D))^* \overset{r.w.i.}\hookrightarrow \Pi_I (M_{m(i)}(D^{**}))^*  = \ell_1^I(\mathcal{S}_1^{m(i)}(D^*))^{**}.
\]
By the local reflexivity principle, for arbitrary $F$, $G$ and $\e$ as in the theorem, there exists a map $\alpha^{\e}_I : G \to \ell_1(I, \mathcal{S}_1(D^*))$, such that
\[
 |\langle \alpha^{\e}_I(g), \sigma_I(f)\rangle - \langle \varphi_I(g), \sigma_I(f)\rangle| < \e \|f\|\|g\|,
\]
for $f\in F$ and $g\in G$. By carefully choosing an Auerbach basis for the finite dimensional spaces, 
we can have the above relation on a finite subset $I_0 \subset I$, i.e.
\[
 |\langle \alpha^{\e}_{I_0}(g), \sigma_{I_0}(f)\rangle - \langle \varphi_{I_0}(g), \sigma_{I_0}(f)\rangle| < \e \|f\|\|g\|.
\]
By the \emph{r.w.i.} property of $\sigma_I$ we have $\langle \varphi_I(g), \sigma_I(f)\rangle = \langle g, f\rangle$. 
Therefore for $f$ and $g$ with norm $1$, we have $|\langle \alpha^{\e}_{I_0}(g), \sigma_{I_0}(f)\rangle-\langle g, f\rangle|<\e$, and hence $|\langle g, {\alpha^{\e*}_{I_0}} \circ \sigma_{I_0}(f) - f\rangle |< \e$. Let $u = \sigma_{I_0}$ and $v = {\alpha^{\e*}_{I_0}}$. Then we have the desired result.   
\end{proof}

\section{Module version of QWEP}

\begin{definition}
A $C^*$-algebra $B$ is said to be $D$QWEP$_i$ if it is the quotient of a $C^*$-algebra $A$ with $D$WEP$_i$ for $i=1,2$. 
\end{definition}

Similar to the $D$WEP$_i$, we have a tensor characterization for $D$QWEP$_i$ for $i=1,2$ as follows. First we need the following result due to Kirchberg.

\begin{lemma}[\cite{Ki1} Corollary 3.2 (v)]
\label{ki md}
If $\phi:A\to B^{**}$ is a ucp map such that $\phi$ maps the multiplicative domain $\md(\phi)$ of $\phi$ onto a C$^*$-subalgerba $C$ of $B^{**}$ containing $B$ as a subalgebra, then the C$^*$-algebra $\md(\phi)\cap \phi^{-1}(B)$ is relatively weakly injective in $A$.
\end{lemma}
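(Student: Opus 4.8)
The plan is to produce a unital completely positive map $\Phi:A\to A_0^{**}$, where $A_0:=\md(\phi)\cap\phi^{-1}(B)$, satisfying $\Phi|_{A_0}=\id_{A_0}$; this is precisely relative weak injectivity of $A_0$ in $A$. First I would record the algebraic setup: since $\phi$ restricts to a $*$-homomorphism on $\md(\phi)$ and $\phi(\md(\phi))=C\supseteq B$, the restriction $\pi:=\phi|_{A_0}:A_0\to B$ is a surjective $*$-homomorphism of C$^*$-algebras, and $1\in A_0$ because $\phi$ is unital. Writing $J:=\ker\pi=\md(\phi)\cap\ker\phi$, the bidual $\pi^{**}:A_0^{**}\to B^{**}$ is a surjective normal $*$-homomorphism whose kernel is the weak$^*$-closed ideal $J^{**}=zA_0^{**}$ for a central projection $z\in A_0^{**}$, and $\pi^{**}$ restricts to a normal isomorphism $(1-z)A_0^{**}\cong B^{**}$. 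Let $\rho:B^{**}\to A_0^{**}$ be its inverse, a normal $*$-homomorphism (a section of $\pi^{**}$) with range $(1-z)A_0^{**}$ and $\rho(1)=1-z$.

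The map $\Phi^{(1)}:=\rho\circ\phi:A\to A_0^{**}$ is completely positive, and for $a\in A_0$ one computes $\Phi^{(1)}(a)=\rho(\pi(a))=(1-z)a$, while $\Phi^{(1)}(1)=\rho(1)=1-z$. Thus $\Phi^{(1)}$ recovers only the $(1-z)$-part of the identity on $A_0$ (morally the copy of $B^{**}$ inside $A_0^{**}$) and is not unital; the difficulty is to supply the missing $z$-part, i.e.\ a completely positive map carrying $a\in A_0$ to $za$ and $1$ to $z$.

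The key step, which I expect to be the main obstacle, is to realise this missing piece as an honest compression on $A$. I would prove that $J=\md(\phi)\cap\ker\phi$ is a \emph{hereditary} C$^*$-subalgebra of $A$: if $a\in A$ with $0\le a\le b$ and $b\in J$, then $0\le\phi(a)\le\phi(b)=0$ gives $\phi(a)=0$, and $\phi(a^2)\le\|a\|\,\phi(a)=0$ forces $\phi(a^2)=\phi(a)^2$, hence $a\in\md(\phi)$ and so $a\in J$ (a short Cauchy--Schwarz/multiplicative-domain computation also confirms $JAJ\subseteq J$). Consequently, letting $z_A\in A^{**}$ be the open projection supporting $J$, the standard identification for hereditary subalgebras gives $z_AA^{**}z_A=J^{**}$; and since an approximate unit of $J$ converges weak$^*$ to the central projection $z$ inside $A_0^{**}$ and to $z_A$ inside $A^{**}$, weak$^*$-continuity of the inclusion $A_0^{**}\hookrightarrow A^{**}$ identifies $z_A=z$. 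Hence $zA^{**}z=J^{**}=zA_0^{**}\subseteq A_0^{**}$, so $\Phi^{(2)}(x):=zxz$ defines a completely positive map $A\to A_0^{**}$ with $\Phi^{(2)}(1)=z$ and $\Phi^{(2)}(a)=zaz=za$ for $a\in A_0$ (using centrality of $z$ in $A_0^{**}$).

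Finally I would set $\Phi:=\Phi^{(1)}+\Phi^{(2)}:A\to A_0^{**}$. It is completely positive as a sum of completely positive maps, it is unital since $\Phi(1)=(1-z)+z=1$, and for $a\in A_0$ we obtain $\Phi(a)=(1-z)a+za=a$. Thus $\Phi$ is a ucp map that is the identity on $A_0$, proving that $A_0=\md(\phi)\cap\phi^{-1}(B)$ is relatively weakly injective in $A$. The only delicate points are the surjectivity and splitting of $\pi^{**}$ (immediate from $B\subseteq C$ and normality) and the heredity of $J$ together with the matching of the two projections $z$; everything else is formal.
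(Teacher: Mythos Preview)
The paper does not supply its own proof of this lemma; it is quoted directly from Kirchberg \cite{Ki1}, Corollary~3.2(v), and then used as a black box. So there is no in-paper argument to compare against, only the brief hint in the proof of Lemma~\ref{wri qwep}, where the authors invoke the decomposition $A_0^{**}=J^{**}\oplus B_0^{**}\subset J^{**}\oplus B^{**}=A^{**}$ to obtain relative weak injectivity in a closely related situation.

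Your argument is correct and is essentially the standard proof underlying Kirchberg's result. The two ingredients you isolate are exactly the right ones: (i) the section $\rho:B^{**}\to(1-z)A_0^{**}$ coming from the surjective $*$-homomorphism $\pi=\phi|_{A_0}$ gives the $(1-z)$-component, and (ii) heredity of $J=\md(\phi)\cap\ker\phi$ in $A$ gives the corner $zA^{**}z=J^{**}\subseteq A_0^{**}$ and hence the $z$-component. Your heredity check is fine; an equivalent and slightly slicker route is to note that $J=N_\phi\cap N_\phi^*$ where $N_\phi=\{a:\phi(a^*a)=0\}$ is the left kernel of $\phi$, which is automatically a closed left ideal, so $J$ is hereditary by general principles. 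The identification of the open projection of $J$ in $A^{**}$ with the central support $z$ in $A_0^{**}$ via the weak$^*$-limit of an approximate unit is the one genuinely subtle point, and you handle it correctly. The final $\Phi=\rho\circ\phi+z(\,\cdot\,)z$ is then ucp and restricts to the identity on $A_0$, as required.
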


We only prove the tensor characterization for $D$QWEP$_1$. The proof of the other case is similar.

\begin{theorem}
\label{tensor qwep}
 Let $\CF\subset \LL(\HH^u_D)$ be the universal representation. The following statements are equivalent:
\begin{enumerate}
 \item[(i)] A $C^*$-algebra $B$ is $D$QWEP$_1$;
 \item[(ii)] For any ucp map $u: C^*\F_{\infty} \to B$, the map $u \otimes \id$ extends to a continuous map from $C^*\F_{\infty}\otimes_{max_1^D}C^*\F_{\infty}$ to $B \otimes_{max}C^*\F_{\infty}$, where $max_1^D$ is the induced norm from the inclusion $C^*\F_{\infty}\otimes C^*\F_{\infty}\subseteq\mathcal{L(H}^u_D)\otimes_{max}C^*\F_{\infty}$.
 \end{enumerate}
\end{theorem}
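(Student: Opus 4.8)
The plan is to mimic Lance's ``Trick'' in the quotient setting, replacing the genuine weak expectation of Theorem~\ref{tensor wep} by Kirchberg's multiplicative-domain lemma (Lemma~\ref{ki md}); this is the relative analogue of the classical tensor characterization of QWEP. For $(i)\Rightarrow(ii)$, suppose $B=A/J$ with $A$ having the $D$WEP$_1$ and $q\colon A\to B$ the quotient $^*$-homomorphism, and let $u\colon\CF\to B$ be a ucp map. First I would use the lifting property of $\CF$ to produce a ucp lift $\tilde u\colon\CF\to A$ with $q\circ\tilde u=u$. By the categoricity of the $max_1^D$ norm, $\tilde u\otimes\id$ is a ucp map $\CF\otimes_{max_1^D}\CF\to A\otimes_{max_1^D}\CF$, and the latter equals $A\otimes_{max}\CF$ by Theorem~\ref{tensor wep}. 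Composing with the max-continuous $^*$-homomorphism $q\otimes\id\colon A\otimes_{max}\CF\to B\otimes_{max}\CF$ gives $u\otimes\id=(q\otimes\id)(\tilde u\otimes\id)$ as a continuous map into $B\otimes_{max}\CF$, which is precisely $(ii)$.

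For $(ii)\Rightarrow(i)$ the goal is to exhibit $B$ as a quotient of a $D$WEP$_1$ algebra, the natural parent being the universal object $\LL(\HH^u_D)$, which has the $D$WEP$_1$ because it is \emph{r.w.i.} in itself. I would represent $B\subseteq\BB(\HH)$ so that $B''=B^{**}$, fix a surjective $^*$-homomorphism $u\colon\CF\to B$, and choose a representation $\pi\colon\CF\to\BB(\HH)$ whose image commutes with $B$ and generates $B'$ as a von Neumann algebra, so that $\pi(\CF)'=B^{**}$; these assemble into a $^*$-representation $\sigma\colon B\otimes_{max}\CF\to\BB(\HH)$, $\sigma(b\otimes x)=b\pi(x)$. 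Applying $(ii)$ to this $u$ and composing with $\sigma$ yields a ucp map on $\CF\otimes_{max_1^D}\CF\subseteq\LL(\HH^u_D)\otimes_{max}\CF$, which I would extend by Arveson's theorem (using injectivity of $\BB(\HH)$) to a ucp map $\Psi\colon\LL(\HH^u_D)\otimes_{max}\CF\to\BB(\HH)$, and then set $\phi(T)=\Psi(T\otimes1)$.

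The decisive step is the multiplicative-domain analysis of $\phi$. Since $\Psi$ restricts to the $^*$-homomorphism $\pi$ on $1\otimes\CF$, that copy lies in $\md(\Psi)$, so $\Psi$ is a $\pi(\CF)$-bimodule map and $\phi(T)\pi(x)=\Psi(T\otimes x)=\pi(x)\phi(T)$; hence $\phi(T)\in\pi(\CF)'=B^{**}$, i.e.\ $\phi\colon\LL(\HH^u_D)\to B^{**}$. On the universal copy $\CF\subseteq\LL(\HH^u_D)$ one computes $\phi(y)=\sigma(u(y)\otimes1)=u(y)$, so $\phi$ agrees with the surjective $^*$-homomorphism $u$ there; in particular $\CF\subseteq\md(\phi)$ and $B=u(\CF)\subseteq\phi(\md(\phi))$. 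Lemma~\ref{ki md} then shows $\md(\phi)\cap\phi^{-1}(B)$ is \emph{r.w.i.} in $\LL(\HH^u_D)$, so it has the $D$WEP$_1$ by Lemma~\ref{wri wep}, and $\phi$ restricts to a $^*$-homomorphism from it onto $B$. Thus $B$ is a quotient of a $D$WEP$_1$ algebra, i.e.\ $D$QWEP$_1$.

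I expect the main obstacle to be the $(ii)\Rightarrow(i)$ direction, and within it the two points that make the bimodule computation work: arranging the auxiliary representation $\pi$ to generate \emph{exactly} the commutant $B'$ (equivalently $\pi(\CF)'=B^{**}$), which forces a careful choice of the universal representation of $B$ and, for non-separable $B$, a free group of the appropriate rank in place of $\F_\infty$; and verifying cleanly that $1\otimes\CF\subseteq\md(\Psi)$ so that the identity $\phi(T)\pi(x)=\pi(x)\phi(T)$ is legitimate. The passage $(i)\Rightarrow(ii)$ is comparatively routine once the lifting property of $\CF$ is invoked.
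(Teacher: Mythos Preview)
Your proposal is correct and follows essentially the same route as the paper: both directions use the lifting property of $\CF$ and Theorem~\ref{tensor wep} for $(i)\Rightarrow(ii)$, and for $(ii)\Rightarrow(i)$ both take a surjection $u\colon\CF\to B$, extend via Arveson to $\Psi$ on $\LL(\HH^u_D)\otimes_{max}\CF$, apply \emph{The Trick} to get $\phi\colon\LL(\HH^u_D)\to B^{**}$, and then invoke Lemma~\ref{ki md} on $\md(\phi)\cap\phi^{-1}(B)$. You have in fact unpacked the bimodule computation and the role of $\pi(\CF)'=B^{**}$ more explicitly than the paper, which simply refers back to the proof of Theorem~\ref{tensor wep} for that step.
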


\begin{proof}
 (i)$\Rightarrow$(ii): Suppose $B$ is $D$QWEP$_1$. Then $B=A/J$ for some $C^*$-algebra $A$ with $D$WEP$_1$. Let $u: C^*\F_{\infty} \to B$ be a ucp map, and $\pi:A\to B$ be the quotient map. Since $C^*\F_{\infty}$ has the lifting property, there exists a ucp map $\varphi: C^*\F_{\infty} \to A$ which lifts $u$, i.e. the following diagram commutes
\[
\xymatrix{
 C^*\F_{\infty}\ar [r]^{u} \ar[d]_{\varphi} & B\\
    A\ar [ur]_{\pi} &
  }   
\]

 By Theorem ~\ref{tensor wep}, we have $A\otimes_{max_1^D} C^*\F_{\infty} = A\otimes_{max} C^*\F_{\infty}$. Therefore, we have the following continuous maps
 \[
  C^*\F_{\infty} \underset{max^D_1}\otimes C^*\F_{\infty} \overset{\varphi\otimes \id}\longrightarrow A \underset{max^D_1}\otimes C^*\F_{\infty} =  A \underset{max}\otimes C^*\F_{\infty}\overset{\pi\otimes \id}\longrightarrow B \underset{max}\otimes C^*\F_{\infty}.   
\]
Note that $(\pi\otimes \id)\circ(\varphi\otimes \id)|_{C^*\F_{\infty}\otimes 1_{C^*\F_{\infty}}}=u$ by the lifting property. Therefore, $u\otimes \id$ extends to a continuous map from $C^*\F_{\infty}\otimes_{max_1^D}C^*\F_{\infty}$ to $B \otimes_{max}C^*\F_{\infty}$.

(ii)$\Rightarrow$(i):  Let $u: C^*\F_{\infty} \to B$ be the quotient map. We have the following diagram
 \[
\xymatrix{
 C^*\F_{\infty} \underset{max_1^D}\otimes C^*\F_{\infty} \ar [r]^{\qquad{u\otimes id}} \ar@ {^{(}->}[d]&B\underset{max}\otimes C^*\F_{\infty} \ar [r] & \mathbb{B}(\HH)\\
    \mathcal{L(H}^u_D)\underset{max}\otimes C^*\F_{\infty} \ar [urr], &
  }   
\]
where $\BB(\HH)$ is the universal representation of $B$. By Arveson's extension theorem, there exists a ucp map $\Phi:\mathcal{L(H}^u_D)\otimes_{max} C^*\F_{\infty}\to\mathbb{B}(\HH)$. Using \emph{The Trick} (see proof of Theorem ~\ref{tensor wep}), we get a map $\phi: \mathcal{L(H}^u_D)\to B^{**}$. 
Let $\md(\phi)$ be the multiplicative domain of $\phi$. Note that $C^*\F_{\infty}\subset \md(\phi)$. Therefore, $\phi$ maps $\md(\phi)$ onto a $C^*$-subalgebra of $B^{**}$ containing $B$. Let $A=\md(\phi)\cap \phi^{-1}(B)$. Then by Corollary ~\ref{ki md}, $A$
is \emph{r.w.i.} in $\mathcal{L(H}^u_D)$, so $A$ has the $D$WEP$_1$. Hence $B$ as a quotient of $A$ is $D$QWEP$_1$.
\end{proof}

\begin{remark}
\normalfont
In the proof of the above Theorem, we showed that the second statement is equivalent to the statement that for any ucp map $u: C^*\F_{\infty} \to B$, $w:C^*\F_{\infty} \to B^{\op}$, the map
 $u \otimes w$ extends to a continuous map from $C^*\F_{\infty}\otimes_{max_1^D}C^*\F_{\infty}$ to $B \otimes_{max} B^{\op}$.
\end{remark}

Now let us investigate some basic properties of the $D$QWEP. We have the following proposition similar to the $D$WEP case.
\begin{proposition}
\label{min qwep}
The following hold:

\begin{enumerate}\item If a $C^*$-algebra $B$ is $D$QWEP$_1$ and $C$ is nuclear, then $C\otimes_{min}B$ is also $D$QWEP$_1$.
 \item If von Neumann algebras $M$ and $N$ are $C$QWEP$_2$ and $D$QWEP$_2$, respectively, then $M \bar{\otimes} N$ is $(C\otimes_{min}D)$QWEP$_2$. 
 \end{enumerate}
\end{proposition}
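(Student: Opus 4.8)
The plan is to mirror the structure of the proof of Proposition~\ref{min wep} (the $D$WEP analogue), reducing each $D$QWEP statement to the corresponding $D$WEP fact via the definition of $D$QWEP as a quotient. For part (1), I would start by writing $B = A/J$ where $A$ has the $D$WEP$_1$. The natural candidate for the $D$WEP$_1$ preimage of $C\otimes_{min}B$ is $C\otimes_{min}A$: since $C$ is nuclear (hence exact) and $A$ has the $D$WEP$_1$, part (1) of Proposition~\ref{min wep} gives that $C\otimes_{min}A$ has the $D$WEP$_1$. Then I would exhibit $C\otimes_{min}B$ as a quotient of $C\otimes_{min}A$. The key point making this work is that tensoring with a nuclear (hence exact) $C$ preserves exactness of sequences at the level of the minimal tensor norm, so the quotient map $\pi: A\to B$ induces a surjection $\id_C\otimes\pi: C\otimes_{min}A \to C\otimes_{min}B$ with the expected kernel. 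Thus $C\otimes_{min}B$ is a quotient of a $D$WEP$_1$ algebra, which is precisely the definition of $D$QWEP$_1$.

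For part (2), the strategy is parallel but carried out in the von Neumann algebra category with the spatial tensor product $\bar\otimes$. I would write $M = \widetilde M/J_M$ and $N = \widetilde N/J_N$ with $\widetilde M$ having the $C$WEP$_2$ and $\widetilde N$ having the $D$WEP$_2$; however, since $M$ and $N$ are required to be von Neumann algebras and $D$QWEP$_2$ is defined via quotients, I expect the cleaner route is to use Proposition~\ref{dd}-style reasoning together with part (2) of Proposition~\ref{min wep}: realize $M\bar\otimes N$ as a quotient of a von Neumann algebra of the form $\widetilde M\bar\otimes \widetilde N$ which has the $(C\otimes_{min}D)$WEP$_2$ by Proposition~\ref{min wep}(2). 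The identification $\mathcal{L}^w(\HH_{C^{**}\bar\otimes D^{**}}) = \mathcal{L}^w(\HH_{C^{**}})\bar\otimes\mathcal{L}^w(\HH_{D^{**}})$ used there supplies the ambient module structure, and the conclusion follows once the quotient map is in place.

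The main obstacle I anticipate is in part (2): ensuring that the quotient structure witnessing $C$QWEP$_2$ and $D$QWEP$_2$ on the individual factors lifts compatibly to the \emph{spatial} tensor product $M\bar\otimes N$ as a genuine von Neumann algebra quotient. Unlike the minimal C$^*$-tensor product in part (1), the normal (spatial) tensor product does not automatically respect quotients, because the kernel of a normal surjection is a weak$^*$-closed ideal $z\widetilde M$ for a central projection $z$, and one must check that $\widetilde M\bar\otimes\widetilde N \to M\bar\otimes N$ is the quotient by the weak$^*$-closed ideal generated by $z_M\otimes 1$ and $1\otimes z_N$. I would handle this using the central-projection description of von Neumann subalgebras and normal conditional expectations, in the spirit of Remark~\ref{unitization}; the compatibility of the two central projections and the identification of the resulting quotient with $M\bar\otimes N$ is the delicate bookkeeping step. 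Once that identification is secured, invoking Proposition~\ref{min wep}(2) on the covering algebras finishes the argument.
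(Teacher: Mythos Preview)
Your argument for part (1) is correct and is exactly the paper's proof: write $B=A/J$ with $A$ having the $D$WEP$_1$, use exactness of the nuclear $C$ to identify $C\otimes_{min}B \cong (C\otimes_{min}A)/(C\otimes_{min}J)$, and invoke Proposition~\ref{min wep}(1).

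For part (2) your route diverges from the paper's, and the obstacle you flag is real and more serious than a bookkeeping issue. The covering algebras $\widetilde M$, $\widetilde N$ furnished by the definition of $D$QWEP$_2$ are only C$^*$-algebras, so $\widetilde M\bar\otimes\widetilde N$ is not even defined; and if you pass to $\widetilde M^{**}$, $\widetilde N^{**}$ to get von Neumann algebras, you lose the WEP$_2$ hypothesis needed for Proposition~\ref{min wep}(2) (double duals preserve QWEP, not WEP). So the ``quotient-then-tensor'' strategy does not close.

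The paper sidesteps this entirely by replacing the quotient description of QWEP with a \emph{r.w.i.} description: from $M$ being $C$QWEP$_2$ one extracts that $M$ is \emph{r.w.i.} in $\mathcal{L}^w(\HH_{C^{**}})^{**}$ (via the direct-summand splitting $M^{**}\subset A^{**}$ and the double dual of the \emph{r.w.i.} inclusion $A\subset\mathcal{L}^w(\HH_{C^{**}})$), and similarly for $N$. Relative weak injectivity of von Neumann subalgebras is witnessed by conditional expectations, and these tensor cleanly under $\bar\otimes$, so $M\bar\otimes N$ is \emph{r.w.i.} in $\mathcal{L}^w(\HH_{C^{**}})^{**}\bar\otimes\mathcal{L}^w(\HH_{D^{**}})^{**}$, which in turn sits \emph{r.w.i.} inside $\mathcal{L}^w(\HH_{(C\otimes_{min}D)^{**}})^{**}$ by the same identification used in Proposition~\ref{min wep}(2). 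Being \emph{r.w.i.} in this last double dual gives $(C\otimes_{min}D)$QWEP$_2$ directly. The gain of the paper's approach is precisely that \emph{r.w.i.} is stable under $\bar\otimes$ whereas the quotient picture is not.
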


\begin{proof}
(1) Suppose $B$ is $D$QWEP$_1$, then $B=A/J$ for some C$^*$-algebra $A$ with the $D$WEP$_1$. Since $C$ is nuclear, it is also exact. Therefore, we have 
\[
C\underset{min}\otimes B=C\underset{min}\otimes (A/J)\cong \frac{C\otimes_{min} A}{{C\otimes_{min} J}}.
\]
But $C\otimes_{min} A$ has the $D$WEP$_1$ by Proposition ~\ref{min wep}(1). Therefore, $C\otimes_{min}B$ is $D$QWEP$_1$.

(2) Since $M$ is $C$QWEP$_2$, it is \emph{r.w.i.} in $\mathcal{L}^w(\HH_{C^{**}})^{**}$. Similarly, $N$ is \emph{r.w.i.} in $\mathcal{L}^w(\HH_{D^{**}})^{**}$. Therefore, we have ucp maps 
\[
M\bar{\otimes}N \overset{r.w.i.}\hookrightarrow \mathcal{L}^w(\HH_{C^{**}})^{**} \bar{\otimes} \mathcal{L}^w(\HH_{D^{**}})^{**} \overset{r.w.i.}\hookrightarrow \mathcal{L}^w(\HH_{C^{**}\bar{\otimes}D^{**}})^{**}.
\]\
Note that by the same argument as in the proof of Proposition ~\ref{min wep} (2), $\mathcal{L}^w(\HH_{C^{**}\bar\otimes D^{**}})^{**}$ is \emph{r.w.i.} in $\mathcal{L}^w(\HH_{(C\otimes_{min} D)^{**}})^{**}$. Hence $M \bar{\otimes} N$ is \emph{r.w.i.} in $\mathcal{L}^w(\HH_{(C\otimes_{min} D)^{**}})^{**}$. Therefore, $M \bar{\otimes} N$ is $(C\otimes_{min}D)$QWEP$_2$.
\end{proof}

By Theorem ~\ref{12}, $D$WEP$_1$ implies $D$WEP$_2$, and hence $D$QWEP$_1$ implies $D$QWEP$_2$. In Section 5 we will show that there exist $C^*$-algebras with $D$WEP$_2$ which do not have $D$WEP$_1$. However in the QWEP context, the two concepts coincide. 
To see this, we need the following lemmas in which we use Kirchberg's categorical method. 

\begin{remark}
\label{wepqwep}
\normalfont
If a C$^*$-algebra $A$ has the $D$WEP$_2$, then it is $D^{**}$QWEP$_1$. Indeed since $A$ has the $D$WEP$_2$, it is \emph{r.w.i.} in $\BB(\ell_2)\bar\otimes D^{**}=(\mathcal{K}\otimes D)^{**}$. Now since $D$ is $D^{**}$QWEP$_1$, so is $\mathcal{K}\otimes D$ and therefore, so is $(\mathcal{K}\otimes D)^{**}$. Hence $A$ is $D^{**}$QWEP$_1$.
\end{remark}

The next lemma shows that $D$QWEP$_i$, for $i=1,2$, is stable under the direct products.

\begin{lemma}
\label{prod qwep}
Suppose $(B_i)_{i\in I}$ is a net of $C^*$-algebras in $\BB(\mathcal{H})$. If $B_i$ is $D$QWEP$_i$, for all $i\in I$, then so is $\Pi_{i\in I} B_i$.
\end{lemma}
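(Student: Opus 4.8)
The plan is to exhibit $\Pi_{\lambda\in I} B_\lambda$ as a quotient of a $C^*$-algebra with the $D$WEP$_i$, which is precisely the definition of $D$QWEP$_i$. Throughout I fix the type $i\in\{1,2\}$ and, to avoid clashing with it, reindex the net as $(B_\lambda)_{\lambda\in I}$. Since each $B_\lambda$ is $D$QWEP$_i$, I would first write $B_\lambda = A_\lambda/J_\lambda$, where $A_\lambda$ has the $D$WEP$_i$, $J_\lambda$ is a closed two-sided ideal of $A_\lambda$, and $\pi_\lambda\colon A_\lambda\to B_\lambda$ is the quotient $^*$-homomorphism.

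Next I would assemble these into a single quotient. By Lemma ~\ref{prod wep}, the direct product $\Pi_{\lambda} A_\lambda$ has the $D$WEP$_i$. Let $\Pi_{\lambda} J_\lambda$ denote the set of bounded families $(x_\lambda)$ with $x_\lambda\in J_\lambda$ for every $\lambda$; this is a closed two-sided ideal of $\Pi_{\lambda} A_\lambda$. I would then consider the natural $^*$-homomorphism
\[
 \Pi_{\lambda} \pi_\lambda \colon \Pi_{\lambda} A_\lambda \longrightarrow \Pi_{\lambda} B_\lambda, \qquad (a_\lambda)_\lambda \longmapsto (\pi_\lambda(a_\lambda))_\lambda,
\]
whose kernel is visibly $\Pi_{\lambda} J_\lambda$. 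The whole argument then reduces to showing that this map is surjective, for then $\Pi_{\lambda} B_\lambda \cong (\Pi_{\lambda} A_\lambda)/(\Pi_{\lambda} J_\lambda)$ is a genuine $C^*$-quotient of $\Pi_{\lambda} A_\lambda$.

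The main obstacle is exactly this surjectivity: given a bounded family $(b_\lambda)\in\Pi_{\lambda} B_\lambda$, I must produce a \emph{uniformly bounded} lift $(a_\lambda)\in\Pi_{\lambda} A_\lambda$. Here I would invoke the fact that a surjective $^*$-homomorphism of $C^*$-algebras is a metric surjection, i.e. the induced map $A_\lambda/J_\lambda\to B_\lambda$ is isometric, so that $\|b_\lambda\| = \inf\{\|a\| : \pi_\lambda(a)=b_\lambda\}$. Consequently, for each $\lambda$ I can choose $a_\lambda\in A_\lambda$ with $\pi_\lambda(a_\lambda)=b_\lambda$ and $\|a_\lambda\|\le \|b_\lambda\|+1$. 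Then $\sup_\lambda\|a_\lambda\|\le \sup_\lambda\|b_\lambda\|+1<\infty$, so $(a_\lambda)$ indeed lies in $\Pi_{\lambda} A_\lambda$ and maps onto $(b_\lambda)$. This uniform control of the lifts is the only delicate point; everything else is formal.

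With surjectivity established, $\Pi_{\lambda} B_\lambda$ is the quotient of $\Pi_{\lambda} A_\lambda$, a $C^*$-algebra with the $D$WEP$_i$, and is therefore $D$QWEP$_i$ by definition, which completes the argument. I would remark that the ambient $\BB(\HH)$ plays no essential role here beyond furnishing a common representation of the $B_\lambda$: both the product $D$WEP$_i$ algebra $\Pi_{\lambda} A_\lambda$ and the bounded lifts are constructed intrinsically.
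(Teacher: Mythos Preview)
Your proof is correct and follows essentially the same approach as the paper: write each $B_\lambda$ as a quotient of a $D$WEP$_i$ algebra $A_\lambda$, invoke Lemma~\ref{prod wep} to see that $\Pi_\lambda A_\lambda$ has the $D$WEP$_i$, and conclude that $\Pi_\lambda B_\lambda$ is a quotient of it. The paper states this in two lines without justifying the surjectivity of $\Pi_\lambda \pi_\lambda$; your added detail on obtaining uniformly bounded lifts via the metric-surjection property of $C^*$-quotients is exactly the missing step and is handled correctly.
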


\begin{proof}
Since $B_i$ is $D$QWEP$_i$, it is a quotient of a $C^*$-algebra $A_i$ with $D$WEP$_i$. By Lemma ~\ref{prod wep}, $\Pi_{i\in I} A_i$ has the $D$WEP$_i$. Therefore, $\Pi_{i\in I} B_i$ is $D$QWEP$_i$.
\end{proof}

\begin{lemma} 
\label{wri qwep}
Let $B$ be a $D$QWEP$_i$ C$^*$-algebra, for $i=1,2$, and $B_0$ a C$^*$-subalgebra of $B$ which is relatively weakly injective in $B$. Then $B_0$ is also a $D$QWEP$_i$ C$^*$-algebra.
\end{lemma}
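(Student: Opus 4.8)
The plan is to verify the tensor-product characterization of $D$QWEP$_1$ from Theorem~\ref{tensor qwep} directly for $B_0$, transporting the corresponding property of $B$ through the relatively weakly injective map. First I would record the hypotheses concretely: since $B$ is $D$QWEP$_1$, Theorem~\ref{tensor qwep} says that for every ucp map $u\colon C^*\F_{\infty}\to B$ the map $u\otimes\id$ extends to a continuous map $C^*\F_{\infty}\otimes_{max_1^D}C^*\F_{\infty}\to B\otimes_{max}C^*\F_{\infty}$; and since $B_0$ is relatively weakly injective in $B$ there is a ucp map $\varphi\colon B\to B_0^{**}$ with $\varphi|_{B_0}=\id_{B_0}$.

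Next, I would fix an arbitrary ucp map $u_0\colon C^*\F_{\infty}\to B_0$ and compose with the inclusion $\iota\colon B_0\hookrightarrow B$ to obtain $u=\iota\circ u_0\colon C^*\F_{\infty}\to B$. Applying the characterization for $B$ gives a continuous map $\Xi\colon C^*\F_{\infty}\otimes_{max_1^D}C^*\F_{\infty}\to B\otimes_{max}C^*\F_{\infty}$ extending $u\otimes\id$. I would then post-compose with the ucp map $\varphi\otimes\id\colon B\otimes_{max}C^*\F_{\infty}\to B_0^{**}\otimes_{max}C^*\F_{\infty}$ to form $\Theta=(\varphi\otimes\id)\circ\Xi$. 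On the algebraic tensor product one checks directly that $\Theta(a\otimes b)=\varphi(u(a))\otimes b=u_0(a)\otimes b$, using $u(a)=\iota(u_0(a))\in B_0$ and $\varphi|_{B_0}=\id$; in particular $\Theta$ carries $C^*\F_{\infty}\otimes_{alg}C^*\F_{\infty}$ into $B_0\otimes_{alg}C^*\F_{\infty}$ and restricts to $u_0\otimes\id$ there.

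The crux is to see that $\Theta$ is in fact continuous into $B_0\otimes_{max}C^*\F_{\infty}$, rather than merely into $B_0^{**}\otimes_{max}C^*\F_{\infty}$. For this I would invoke the standard fact that a relatively weakly injective inclusion induces an isometric inclusion of maximal tensor products: since $B_0$ is (trivially) relatively weakly injective in $B_0^{**}$, the natural map $B_0\otimes_{max}C^*\F_{\infty}\to B_0^{**}\otimes_{max}C^*\F_{\infty}$ is isometric. Hence on elements of $B_0\otimes_{alg}C^*\F_{\infty}$ the two norms coincide, so the bound on $\Theta$ coming from the target $B_0^{**}\otimes_{max}C^*\F_{\infty}$ is already a bound into $B_0\otimes_{max}C^*\F_{\infty}$. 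This shows that $u_0\otimes\id$ extends continuously to a map $C^*\F_{\infty}\otimes_{max_1^D}C^*\F_{\infty}\to B_0\otimes_{max}C^*\F_{\infty}$; as $u_0$ was arbitrary, Theorem~\ref{tensor qwep} yields that $B_0$ is $D$QWEP$_1$. The case $i=2$ is identical once one replaces Theorem~\ref{tensor qwep} by the analogous characterization of $D$QWEP$_2$ and the corresponding $B_0^{**}$-level tensor embeddings.

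The main obstacle I anticipate is precisely the double-dual bookkeeping in the third paragraph: one must justify the isometric embedding $B_0\otimes_{max}C^*\F_{\infty}\hookrightarrow B_0^{**}\otimes_{max}C^*\F_{\infty}$ (for instance by extending a pair of commuting representations of $B_0$ and $C^*\F_{\infty}$ to a normal representation of $B_0^{**}$, whose image lies in the bicommutant of the representation of $B_0$ and therefore still commutes with $C^*\F_{\infty}$). Should one prefer to avoid passing through $B_0^{**}$ altogether, the equivalent op-formulation recorded in the Remark following Theorem~\ref{tensor qwep} can be used instead: applying the relatively-weakly-injective tensor embedding to both $B_0\subseteq B$ and $B_0^{\op}\subseteq B^{\op}$ gives an isometric inclusion $B_0\otimes_{max}B_0^{\op}\hookrightarrow B\otimes_{max}B^{\op}$, and the same transport argument then verifies the op-version of the characterization for $B_0$.
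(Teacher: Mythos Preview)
Your argument is correct, but it takes a different route from the paper's proof. The paper argues directly from the definition: writing $B=A/J$ with $A$ having the $D$WEP$_i$ and $\pi\colon A\to B$ the quotient map, it sets $A_0=\pi^{-1}(B_0)$ and uses the double-dual decomposition $A^{**}=J^{**}\oplus B^{**}$, $A_0^{**}=J^{**}\oplus B_0^{**}$ to see that the r.w.i.\ map $B\to B_0^{**}$ (extended normally to $B^{**}$, together with the identity on $J^{**}$) yields a ucp map $A\to A_0^{**}$ fixing $A_0$. Thus $A_0$ is r.w.i.\ in $A$, so $A_0$ has the $D$WEP$_i$ by Lemma~\ref{wri wep}, and $B_0=A_0/J$ is $D$QWEP$_i$.

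Your approach instead works through the tensor characterization of Theorem~\ref{tensor qwep}, transporting the continuity of $u\otimes\id$ via the r.w.i.\ map $\varphi\otimes\id$ and the isometric embedding $B_0\otimes_{max}C^*\F_{\infty}\hookrightarrow B_0^{**}\otimes_{max}C^*\F_{\infty}$. This is perfectly valid; the max-tensor isometry you invoke is exactly the standard consequence of relative weak injectivity you describe. The paper's method has the advantage of being more elementary (it never leaves the definition of $D$QWEP$_i$ and handles $i=1,2$ uniformly without appealing to the tensor characterization), while yours shows how Theorem~\ref{tensor qwep} can be used as a practical criterion and makes the role of the r.w.i.\ hypothesis at the tensor level more transparent.
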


\begin{proof}
If $B$ is $D$QWEP$_i$, then it is a quotient of a $C^*$-algebra $A$ with $D$WEP$_i$. Let $\pi:A\to B$ be the quotient map, $B=A/J$ and $A_0=\pi^{-1}(B_0)$. Then $A_0$ is \emph{r.w.i.} in $A$. In fact this follows from the fact that
\[
 A_0^{**}=J^{**}\oplus B_0^{**}\subset J^{**}\oplus B^{**}=A^{**}.
\]
Now by Lemma ~\ref{wri wep}, 
$A_0=\pi^{-1}(B_0)$ has the $D$WEP$_i$. Hence $B_0$ is $D$QWEP$_i$.
\end{proof}

\begin{lemma}
\label{uball}
Let $A$ and $B$ be unital C$^*$-algebras. Suppose there exists a map $\psi :A\to B$ which maps the closed unit ball of $A$ onto the closed unit ball of $B$. If $A$ has the $D$WEP$_i$, then $B$ is $D$QWEP$_i$, for $i=1,2$. 
\end{lemma}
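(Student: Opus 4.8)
The plan is to produce an honest $*$-homomorphic quotient presentation of $B$ by a C$^*$-subalgebra of $A$ that still carries the $D$WEP$_i$, so that $D$QWEP$_i$ follows directly from the definition. The natural candidate for that subalgebra is the multiplicative domain $\md(\psi)$ of $\psi$ (here I take $\psi$ to be unital completely positive, as the Kadison--Schwarz inequality used below requires). Recall that $\md(\psi)$ is a C$^*$-subalgebra of $A$ on which $\psi$ restricts to a $*$-homomorphism. The whole argument then hinges on Kirchberg's Lemma~\ref{ki md}: if I can show that $\psi$ carries $\md(\psi)$ onto a C$^*$-subalgebra of $B^{**}$ containing $B$, that lemma will hand me the relative weak injectivity I need.

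Concretely, I would proceed as follows. First, for each unitary $b \in B$ I use the hypothesis to pick $a \in A$ with $\|a\| \le 1$ and $\psi(a) = b$; the Kadison--Schwarz inequality gives $\psi(a^*a) \ge \psi(a)^*\psi(a) = 1$, while $\psi(a^*a) \le \|a^*a\|\,1 \le 1$, forcing $\psi(a^*a) = \psi(a)^*\psi(a)$, and symmetrically $\psi(aa^*) = \psi(a)\psi(a)^*$, so $a \in \md(\psi)$. Hence every unitary of $B$ lies in $\psi(\md(\psi))$. Since $\psi(\md(\psi))$ is a closed $*$-subalgebra of $B$ and the unitaries span a dense subspace of the unital C$^*$-algebra $B$, this yields $\psi(\md(\psi)) = B$. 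I then apply Lemma~\ref{ki md} to $\phi = \psi$ viewed as a map into $B^{**}$: its multiplicative domain is again $\md(\psi)$, it sends $\md(\psi)$ onto the C$^*$-subalgebra $B \subseteq B^{**}$, and $\phi^{-1}(B) = A$, so the lemma tells me that $\md(\psi) = \md(\psi) \cap \phi^{-1}(B)$ is relatively weakly injective in $A$. Because $A$ has the $D$WEP$_i$, Lemma~\ref{wri wep} gives that $\md(\psi)$ has the $D$WEP$_i$. Finally, $\psi|_{\md(\psi)} : \md(\psi) \to B$ is a surjective $*$-homomorphism, so $B$ is a quotient of the $D$WEP$_i$ algebra $\md(\psi)$ and is therefore $D$QWEP$_i$; the argument is uniform in $i = 1, 2$.

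I expect the one genuinely substantive step to be the identity $\psi(\md(\psi)) = B$, namely that the multiplicative domain is large enough to cover all of $B$. This is exactly where the ball-onto-ball hypothesis is used (to lift unitaries) and where the positivity of $\psi$ is indispensable (via Schwarz, to force those lifts into $\md(\psi)$); it is worth remembering that the conclusion is false for an arbitrary set map $\psi$, so some such positivity is unavoidable. Once this identity is in hand, the remainder is a formal assembly of Lemma~\ref{ki md}, Lemma~\ref{wri wep}, and the definition of $D$QWEP$_i$.
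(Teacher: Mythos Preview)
Your proposal is correct and follows essentially the same route as the paper: pass to the multiplicative domain $A_0=\md(\psi)$, show $\psi|_{A_0}$ is a surjective $*$-homomorphism onto $B$, invoke Lemma~\ref{ki md} to get $A_0$ relatively weakly injective in $A$, then apply Lemma~\ref{wri wep}. You actually supply more detail than the paper does---in particular the Kadison--Schwarz argument lifting unitaries into $\md(\psi)$ and the explicit ucp hypothesis on $\psi$---where the paper simply asserts surjectivity of $\psi|_{A_0}$ from the ball-onto-ball condition.
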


\begin{proof}
Let $A_0 \subset A$ be the multiplicative domain of $\psi$. Since $\psi$ maps the closed unit ball of $A$ onto that of $B$, the restriction of $\psi$ on $A_0$ is a surjective $*$-homomorphism onto $B$. Let $\pi=\psi|_{A_0}$.

By Lemma ~\ref{ki md}, we have $A_0$ is \emph{r.w.i.} in $A$ and hence it has the $D$WEP$_i$ by Lemma ~\ref{wri wep}. Since $B$ is a quotient of $A_0$, $B$ is $D$-QWEP$_i$. 
\end{proof}

\begin{corollary}
\label{quball}
Let $B$ and $C$ be $C^*$-algebras. Suppose $B$ is $D$QWEP$_i$, and $\psi: B\to C$ is a ucp map that maps the closed unit ball of $B$ onto that of $C$. Then $C$ is $D$QWEP$_i$.
\end{corollary}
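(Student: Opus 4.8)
The plan is to reduce Corollary \ref{quball} to Lemma \ref{uball}, which already handles the case of an arbitrary (not necessarily ucp) surjective-on-unit-balls map from an algebra with the $D$WEP$_i$. Since $B$ is only assumed to be $D$QWEP$_i$, the first step is to unpack this hypothesis: there exists a C$^*$-algebra $A$ with the $D$WEP$_i$ and a surjective $^*$-homomorphism $\pi : A \to B$. The natural idea is to compose $\pi$ with $\psi$ to obtain a single map $\psi \circ \pi : A \to C$ that lands us back in the situation of Lemma \ref{uball}, namely a map out of an algebra with the $D$WEP$_i$.

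The key point to verify is that $\psi \circ \pi$ maps the closed unit ball of $A$ \emph{onto} the closed unit ball of $C$. First I would check that the quotient $^*$-homomorphism $\pi$ maps the closed unit ball of $A$ onto the closed unit ball of $B$: this is a standard fact for surjective $^*$-homomorphisms between unital C$^*$-algebras (indeed, for any surjective $^*$-homomorphism, the image of the open unit ball is the open unit ball, and on unital algebras one gets the statement for the closed unit balls). Then, since $\psi$ by hypothesis maps the closed unit ball of $B$ onto that of $C$, the composition $\psi \circ \pi$ maps the closed unit ball of $A$ onto the closed unit ball of $C$. Note also that $\psi \circ \pi$ is ucp, being a composition of the ucp map $\psi$ with the unital $^*$-homomorphism $\pi$, although what Lemma \ref{uball} actually requires is only the surjectivity-on-unit-balls property.

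With this in hand, the conclusion is immediate: applying Lemma \ref{uball} to the map $\psi \circ \pi : A \to C$, where $A$ has the $D$WEP$_i$ and $\psi \circ \pi$ carries the closed unit ball of $A$ onto that of $C$, we conclude that $C$ is $D$QWEP$_i$.

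The main obstacle, though a minor one, is the surjectivity-on-unit-balls claim for $\pi$. One must be slightly careful because surjectivity of a $^*$-homomorphism on the algebras does not a priori force the image of the \emph{closed} unit ball to be the full closed unit ball without an approximation/lifting argument; the cleanest route is to use that a surjective $^*$-homomorphism of C$^*$-algebras is a complete quotient map, so it maps the open unit ball onto the open unit ball, and then to pass to closures. Alternatively, since the definition of $D$QWEP$_i$ allows replacing $A$ by $A_0 = \md(\psi\circ\pi)$ via Lemma \ref{ki md} exactly as in the proof of Lemma \ref{uball}, one could simply invoke Lemma \ref{uball} as a black box once the unit-ball surjectivity of the composite is established, and no separate argument beyond the standard quotient-map fact is needed.
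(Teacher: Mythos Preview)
Your proof is correct and follows exactly the same route as the paper: write $B$ as a quotient of some $A$ with the $D$WEP$_i$, observe that the surjective $^*$-homomorphism $\pi:A\to B$ carries the closed unit ball onto the closed unit ball, compose with $\psi$, and apply Lemma~\ref{uball} to $\psi\circ\pi$. One small remark: Lemma~\ref{uball} as stated says ``a map'', but its proof uses the multiplicative domain of $\psi$, so the map there is implicitly ucp; your observation that $\psi\circ\pi$ is ucp is therefore not superfluous but exactly what is needed.
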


\begin{proof}
 Since $B$ is $D$QWEP$_i$, there exists a $C^*$-algebra $A$ with the $D$WEP$_i$, and a surjective $^*$-homomorphism $\pi: A \to B$. 
Notice that $\pi$ maps closed unit ball of $A$ onto that of $B$. Hence the composition $\psi \circ \pi$ maps the closed unit ball of $A$ onto that of $C$. By Lemma ~\ref{uball}, $C$ is $D$QWEP$_i$.
\end{proof}

\begin{lemma} 
\label{dc}
Suppose $(B_i)_{i\in I}$ is an increasing net of $C^*$-algebras in $\BB(\mathcal{H})$. If all $B_i$ are $D$QWEP$_i$, then $\overline{\cup B _i}$ and $(\cup B_i)''$ are $D$QWEP$_i$.
\end{lemma}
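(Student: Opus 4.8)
The plan is to realize both $\overline{\cup B_i}$ and $(\cup B_i)''$ through a single auxiliary $D$QWEP algebra, namely a C$^*$-ultraproduct of the net, and then to extract the two conclusions via two different limiting maps. Throughout, fix $k\in\{1,2\}$ and assume each $B_i$ is $D$QWEP$_k$ (the subscript on QWEP is fixed, independent of the net index $i$), with all $B_i$ unital sharing the unit of $\BB(\HH)$. Since $I$ is directed, the order-tails $T_{i_0}=\{i\in I: i\ge i_0\}$ have the finite intersection property, so I fix an ultrafilter $\mathcal U$ on $I$ containing every $T_{i_0}$. Set $P_{\mathcal U}=\prod_{i\in I}B_i/\mathcal I_{\mathcal U}$, where $\mathcal I_{\mathcal U}=\{(x_i):\lim_{\mathcal U}\|x_i\|=0\}$ is the usual ultraproduct ideal. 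By Lemma~\ref{prod qwep} the product $\prod_i B_i$ is $D$QWEP$_k$, and since a quotient of a quotient is again a quotient, $P_{\mathcal U}$ is $D$QWEP$_k$ as well.

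First I would treat the norm closure $C:=\overline{\cup B_i}$. Each $b\in B_{i_0}$ lies in $B_i$ for all $i\ge i_0$, so sending $b$ to the class of the family that equals $b$ on $T_{i_0}$ and $0$ elsewhere is well defined (two choices of $i_0$ differ off a tail, hence inside $\mathcal I_{\mathcal U}$) and gives an isometric $^*$-homomorphism $\iota_0:\cup B_i\to P_{\mathcal U}$; extend by continuity to an isometric $^*$-embedding $\iota:C\hookrightarrow P_{\mathcal U}$. In the reverse direction, every $x_i\in B_i$ sits inside $C\subseteq C^{**}$, so $\tilde Q([(x_i)]):=\text{weak}^*\text{-}\lim_{\mathcal U}x_i$ defines a ucp map $\tilde Q:P_{\mathcal U}\to C^{**}$ (the weak$^*$ limit exists by compactness of the unit ball of $C^{**}$, and vanishing along $\mathcal U$ kills $\mathcal I_{\mathcal U}$). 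For $b\in\cup B_i$ the representing family is eventually constant along a tail, so $\tilde Q(\iota(b))=b$; by continuity $\tilde Q\circ\iota=\id_C$. Thus $\iota(C)$ is relatively weakly injective in $P_{\mathcal U}$, and Lemma~\ref{wri qwep} yields that $C$ is $D$QWEP$_k$.

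Next I would treat $M:=(\cup B_i)''$. Replacing the weak$^*$ limit in $C^{**}$ by the weak operator limit in $\BB(\HH)$, the family $(x_i)$ with $x_i\in B_i\subseteq M$ has $Q([(x_i)]):=\text{wot-}\lim_{\mathcal U}x_i$ landing in $M$ (the unit ball of $M$ is weak-operator compact and $M$ is weak-operator closed), and $Q:P_{\mathcal U}\to M$ is ucp with $Q\circ\iota$ equal to the inclusion $C\hookrightarrow M$. The point is that $Q$ carries the closed unit ball of $P_{\mathcal U}$ onto that of $M$: given $m\in M$ with $\|m\|\le 1$, the Kaplansky density theorem applied to the weak-operator dense subalgebra $\cup B_i\subseteq M$ lets one choose contractions $x_i\in(B_i)_1$ with $\text{wot-}\lim_{\mathcal U}x_i=m$, using that $\mathcal U$ refines the tail filter so that only large $i$ matter. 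Granting this surjectivity on unit balls, Corollary~\ref{quball} applied to the ucp map $Q$ shows that $M$ is $D$QWEP$_k$, completing the proof.

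The main obstacle is precisely the unit-ball surjectivity of $Q$ in the last paragraph: one must produce, for each contraction $m\in M$, a coherent family of contractions $x_i\in B_i$ whose weak-operator ultralimit is exactly $m$. This is the concrete content of Kaplansky density ``taken to the limit'', and the ultrafilter must be used carefully to organize the approximants produced at each level. If one prefers to avoid it, an alternative is to pass to biduals: taking second duals of $\iota$ and $\tilde Q$ exhibits $C^{**}$ as relatively weakly injective in $P_{\mathcal U}^{**}$, and $M$ is a central corner (hence a relatively weakly injective summand) of $C^{**}$. That route, however, requires knowing that $P_{\mathcal U}^{**}$ is again $D$QWEP$_k$, i.e.\ a bidual stability statement for module QWEP not established among the results quoted above, so I would carry out the Kaplansky-density argument directly instead.
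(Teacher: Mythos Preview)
Your approach coincides with the paper's: form a product (the paper uses $\prod_j A_j$ of the $D$WEP covers rather than your ultraproduct of the $B_i$, but this is cosmetic), map into $(\cup B_i)''$ by weak-operator ultralimits, and apply Kaplansky density together with the unit-ball criterion (Lemma~\ref{uball}/Corollary~\ref{quball}); your separate r.w.i.\ argument for $\overline{\cup B_i}$ is a clean addition the paper omits. The obstacle you correctly isolate---that $I$ may be too small to organize the Kaplansky approximants into a single $I$-indexed family---is handled in the paper by passing to a larger directed set $J\supseteq I$ (repeating the algebras as needed), after which a choice such as $x_{(i,V)}\in (B_i)_1\cap V$ yields the required surjectivity on unit balls.
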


\begin{proof}
Let $B=\cup B_i$. It suffices to show that $B''$ is $D$QWEP$_i$. Since $B_i$ is $D$QWEP$_i$, there exists a $C^*$-algebra $A_i$ with $D$WEP$_i$, and a surjective $^*$-homomorphism $\pi_i: A_i\to B_i$.
Let $J$ be a directed set containing $I$. By Lemma \ref{prod wep}, $\prod_{j\in J} A_j$ has the $D$WEP$_i$. Fix a free ultrafilter $\mathcal{U}$ on the net $J$. Define a ucp map $\varphi: \prod_{j\in J} A_j\to B''$ by $\varphi((x_j)_{j\in J}) = \lim_{j \to \mathcal{U}} \pi(x_j)$ in the ultraweak topology. By Kaplansky's density theorem, if $J$ is large enough, then $\varphi$ maps the closed unit ball of $\prod_{j\in J} A_j$ onto that of $B''$. Now by Lemma \ref{uball}, $B''$ is $D$QWEP$_i$.
\end{proof}

The next corollary shows that unlike the $D$WEP case, the $D$QWEP of a C$^*$-algebra and its double dual are equivalent.
\begin{corollary}
\label{ds}
A $C^*$-algebra $B$ is $D$QWEP$_i$ if and only if $B^{**}$ is $D$QWEP$_i$ for $i=1,2$.
\end{corollary}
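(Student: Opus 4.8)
The plan is to prove the two implications separately, with each collapsing onto one of the lemmas already established and the genuine content residing in the ``only if'' direction.

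For the ``if'' direction I would argue that $B$ is always relatively weakly injective in $B^{**}$. Since the bidual of $B$ is $B^{**}$ itself, the identity map $\id_{B^{**}}\colon B^{**}\to B^{**}$ is a ucp map restricting to $\id_B$ on $B$, which is exactly the definition of $B$ being \emph{r.w.i.} in $B^{**}$. Consequently, if $B^{**}$ is $D$QWEP$_i$, then Lemma~\ref{wri qwep} (with ambient algebra $B^{**}$ and \emph{r.w.i.} subalgebra $B$) immediately yields that $B$ is $D$QWEP$_i$. The only point requiring care here is matching the hypotheses of Lemma~\ref{wri qwep} to the correct nesting $B\subseteq B^{**}$.

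For the ``only if'' direction I would place $B$ in its universal representation $B\subseteq\BB(\HH)$, so that the enveloping von Neumann algebra coincides with the bidual, i.e. $B''=B^{**}$. Regarding $B$ as a constant (hence trivially increasing) net of $D$QWEP$_i$ algebras with $\bigcup_i B_i=B$, Lemma~\ref{dc} applies and gives that $(\bigcup_i B_i)''=B''=B^{**}$ is $D$QWEP$_i$. Concretely this reuses the machinery of Lemma~\ref{dc}: lift $B=A/J$ through a $D$WEP$_i$ algebra $A$, pass to an $\ell_\infty$-type product (which retains $D$WEP$_i$ by Lemma~\ref{prod wep}), and invoke Kaplansky's density theorem to produce a ucp map carrying the closed unit ball onto that of $B^{**}$, whereupon Lemma~\ref{uball} finishes the argument.

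I expect the converse to carry all the weight, and specifically the verification that Lemma~\ref{dc} genuinely applies here: one must confirm both that $B''=B^{**}$ in the universal representation and that the \emph{unit ball} of $B$ (not merely $B$ itself) is ultraweakly dense in the unit ball of $B^{**}$, since that density is precisely what makes the ucp map supplied by Lemma~\ref{dc} surjective on unit balls. Once this is recorded, the corollary follows with no further computation.
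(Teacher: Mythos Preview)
Your proposal is correct and follows essentially the same approach as the paper: the ``if'' direction via relative weak injectivity of $B$ in $B^{**}$ together with Lemma~\ref{wri qwep}, and the ``only if'' direction by passing to the universal representation and invoking Lemma~\ref{dc}. The paper's proof is terser but identical in substance.
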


\begin{proof}
The ``if'' direction follows directly from Lemma ~\ref{wri qwep} since $B$ is \emph{r.w.i.} in $B^{**}$. For the other direction, we can apply Lemma ~\ref{dc} to $B$ together with its universal representation.
\end{proof}

\begin{lemma}
\label{factor qwep}
Suppose $B$ and $C$ are $C^*$-algebras, and $B$ factors through $C$ approximately via ucp maps in the point-weak$^*$ topology. 
If $C$ is $D$QWEP$_i$, then so is $B$.
\end{lemma}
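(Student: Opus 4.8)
The plan is to reduce the statement to the unit-ball surjectivity criterion for $D$QWEP$_i$ (Corollary~\ref{quball}) together with the double-dual invariance (Corollary~\ref{ds}), running the same ultrafilter-and-Kaplansky mechanism already used in Lemma~\ref{dc}. I would record the hypothesis as: there is a directed set $J$ and nets of ucp maps $\phi_j\colon B\to C$ and $\psi_j\colon C\to B$ with $\psi_j\circ\phi_j(x)\to x$ in the weak$^*$ topology of $B^{**}$ for every $x\in B$. Since $C$ is $D$QWEP$_i$, the product $\prod_{j\in J}C$ is again $D$QWEP$_i$ by Lemma~\ref{prod qwep}, so the whole task becomes producing a ucp map from this product (after a harmless enlargement of the index set) onto the closed unit ball of $B^{**}$.

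First I would assemble the two nets into single maps. The family $(\phi_j)$ gives a ucp map $\Phi\colon B\to\prod_{j\in J}C$, $\Phi(x)=(\phi_j(x))_j$, which is well defined because each $\phi_j$ is contractive. Fixing a free ultrafilter $\mathcal{U}$ on $J$ that refines the order filter, I would define $\Psi\colon\prod_{j\in J}C\to B^{**}$ by $\Psi((c_j)_j)=\operatorname{w^*-}\lim_{j\to\mathcal{U}}\psi_j(c_j)$; this limit exists by weak$^*$ compactness of bounded sets in $B^{**}$, and $\Psi$ is ucp, since unitality, positivity and complete positivity all pass to weak$^*$ ultrafilter limits of the matrix amplifications. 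By the choice of $\mathcal{U}$ the composition satisfies $\Psi\circ\Phi(x)=x$ for $x\in B$, so $\Psi$ already maps the closed unit ball of $\prod_{j\in J}C$ onto at least the closed unit ball of $B$.

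The remaining, and main, step is to upgrade this to surjectivity onto the closed unit ball of $B^{**}$. Here I would invoke Goldstine's theorem: the unit ball of $B$ is weak$^*$ dense in the unit ball of $B^{**}$, so every $b\in B^{**}$ with $\|b\|\le 1$ is a weak$^*$ limit of a net $(b_\lambda)$ in the unit ball of $B$, and each $b_\lambda=\operatorname{w^*-}\lim_j\psi_j\phi_j(b_\lambda)$ with $\phi_j(b_\lambda)$ in the unit ball of $C$. Exactly as in the proof of Lemma~\ref{dc}, after enlarging $J$ so that a single ultrafilter limit captures this iterated limit, the map $\Psi$ carries the closed unit ball of $\prod_{j\in J}C$ onto the whole closed unit ball of $B^{**}$. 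This diagonalization of the two limits is the delicate point: because the factorization is only approximate rather than an exact lifting, one genuinely leans on weak$^*$ density instead of an honest preimage, and getting a single bounded element of the product to represent the double limit is where the work lies.

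Once unit-ball surjectivity is in hand the conclusion is immediate: $\prod_{j\in J}C$ is $D$QWEP$_i$, so by Corollary~\ref{quball} the ucp unit-ball surjection $\Psi$ forces $B^{**}$ to be $D$QWEP$_i$, and then Corollary~\ref{ds} gives that $B$ itself is $D$QWEP$_i$. As a cleaner alternative that bypasses the Kaplansky diagonalization, I would pass to biduals: $\Phi^{**}\colon B^{**}\to(\prod_J C)^{**}$ and $P\circ\Psi^{**}$ are normal ucp, where $P\colon B^{****}\to B^{**}$ is the canonical ucp projection splitting the inclusion $\iota_B^{**}$, and since $\Psi\circ\Phi=\iota_B$ one gets $(P\circ\Psi^{**})\circ\Phi^{**}=\id_{B^{**}}$. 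A contractive ucp right inverse makes $P\circ\Psi^{**}$ map the unit ball of $(\prod_J C)^{**}$ onto that of $B^{**}$ for free, and as $(\prod_J C)^{**}$ is $D$QWEP$_i$ by Corollary~\ref{ds}, Corollaries~\ref{quball} and~\ref{ds} again finish the argument.
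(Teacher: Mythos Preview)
Your proposal is correct. Both you and the paper begin identically: bundle the $\phi_j$ into a single ucp map $\Phi\colon B\to\prod_J C$, take a weak$^*$ ultrafilter limit of the $\psi_j$ to obtain $\Psi\colon\prod_J C\to B^{**}$, and observe $\Psi\circ\Phi=\iota_B$. The finishes differ. The paper passes to duals and concludes that $B$ is relatively weakly injective in a $D$QWEP$_i$ algebra, then applies Lemma~\ref{wri qwep} together with Corollary~\ref{ds}. You instead reach for Corollary~\ref{quball}: your first route enlarges the index set to force $\Psi$ to surject on unit balls (the diagonalization you flag as delicate), while your second route passes to biduals and notes that $(P\circ\Psi^{**})\circ\Phi^{**}=\id_{B^{**}}$ makes $P\circ\Psi^{**}$ unit-ball surjective automatically. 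This bidual variant is arguably the cleanest of all three endings, since $\Phi$ is only ucp rather than a $*$-embedding (so the literal r.w.i.\ framing requires interpretation) and the iterated-limit diagonalization in your first route is left somewhat informal; the bidual argument sidesteps both issues and lands directly on Corollaries~\ref{quball} and~\ref{ds}.
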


\begin{proof}
Since $B$ factors through $C$, there are families of ucp maps $\alpha_i: B\to C$ and $\beta_i: C\to B$, $i\in I$ such that $\beta_i\circ \alpha_i$ converges to the identity map on $B$ in the point-weak$^*$ topology, i.e.
\[
\lim_{x,\mathcal{U}} (\beta_i \circ \alpha_i)(x)(x^*)= x^*(x)
\]
for $x\in B$, $x^*\in B^*$ and an ultrafilter $\mathcal{U}$.
Define $\alpha: B\to \prod_{i\in I} C$ by $\alpha(x)=(\alpha_i(x))_{i\in I}$, for $x\in B$. Let $\beta: \prod_{i\in I}C\to B^{**}$, $\beta=\lim_{i\to \mathcal{U}}\beta_i$. Define $\beta^{\#}:B^*\to \prod _{\mathcal{U}}C^*$, 
by  $\beta^{\#}(x^*)=(\beta^*(x^*))$. In fact $\beta^{\#}=\beta^{*}|_{B^*}$. Now the following map gives the identity on $B$:

\[
\xymatrix{
B\ar [r]^{\alpha} &\prod C\ar [r]^{(\beta^{\#})^*}& B^{**}.
}
\]
Taking the duals, we have
\[
\xymatrix{
B^*\ar [r]^{\beta^{\#}} &(\prod_{\mathcal{U}} C)^*\ar [r]^{\alpha^*}& B^{*}.
}
\]

This gives a conditional expectation from $C^{**}$ to $B^{**}$ which is identity on $B$. Therefore, $B$ is \emph{r.w.i.} in $C^{**}$. By Corollary ~\ref{ds}, $C^{**}$ is $D$QWEP$_i$. Hence so is $B$.
\end{proof}

\begin{corollary}
If a $C^*$-algebra $B$ is $D$QWEP$_i$, for $i=1,2$, then so is $\mathcal{M}(B)$.
\end{corollary}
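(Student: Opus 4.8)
The plan is to show that $\mathcal{M}(B)$ is $D$QWEP$_i$ by combining the factorization result of Theorem~\ref{factor1} with Lemma~\ref{factor qwep}, in close parallel to how Corollary~\ref{md} was deduced in the $D$WEP case. The strategy rests on the observation that Kirchberg's factorization of the multiplier algebra $\mathcal{M}(B)$ through $\ell_\infty(B)$ — established in Theorem~\ref{factor1} via the ucp maps $V_n$, $W_n$, and the evaluation maps — expresses the identity on $\mathcal{M}(B)$ as an approximate composition of ucp maps routed through a direct product of copies of $B$. Since $D$QWEP$_i$ is preserved under direct products by Lemma~\ref{prod qwep}, the algebra $\ell_\infty(B)$ inherits $D$QWEP$_i$ from $B$, and then the factorization should transfer the property back to $\mathcal{M}(B)$.

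**First I would** invoke Theorem~\ref{factor1} to conclude that the identity map on $\mathcal{M}(B)$ factors through $\ell_\infty(B)$ approximately via ucp maps. Next, since $B$ is $D$QWEP$_i$, Lemma~\ref{prod qwep} gives that $\ell_\infty(B) = \prod_{n} B$ is $D$QWEP$_i$ as well. The final step is to apply Lemma~\ref{factor qwep} to the pair $\mathcal{M}(B)$ and $\ell_\infty(B)$: because $\mathcal{M}(B)$ factors through the $D$QWEP$_i$ algebra $\ell_\infty(B)$, we conclude that $\mathcal{M}(B)$ is itself $D$QWEP$_i$.

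**The main obstacle** I anticipate is a mismatch in the topology of factorization. Theorem~\ref{factor1} produces a factorization in the \emph{point-norm} topology, whereas Lemma~\ref{factor qwep} is stated for factorization in the \emph{point-weak$^*$} topology. This gap is in the favorable direction — point-norm convergence implies point-weak$^*$ convergence — so the norm factorization from Theorem~\ref{factor1} certainly yields a point-weak$^*$ factorization, and Lemma~\ref{factor qwep} applies directly. I would simply remark that the point-norm factorization is \emph{a fortiori} a point-weak$^*$ one. A secondary point to verify is that the intermediate algebra in Theorem~\ref{factor1} is genuinely $\ell_\infty(B)$ (rather than the auxiliary $\ell_\infty(B) \otimes C_0(0,1]$ appearing in the diagram), but the concluding sentence of that theorem already records that $\ell_\infty(B) \otimes C_0(0,1]$ factors through $\ell_\infty(B)$, so the net factorization is indeed through $\ell_\infty(B)$ as required.
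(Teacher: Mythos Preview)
Your proposal is correct and follows essentially the same route as the paper's own proof: invoke Theorem~\ref{factor1} to factor $\mathcal{M}(B)$ through $\ell_\infty(B)$, use Lemma~\ref{prod qwep} to see that $\ell_\infty(B)$ is $D$QWEP$_i$, and conclude via Lemma~\ref{factor qwep}. Your observation that point-norm factorization is \emph{a fortiori} point-weak$^*$ factorization is exactly the bridge needed, and the paper's proof tacitly uses this as well.
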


\begin{proof}
Note that by Theorem ~\ref{factor1}, the identity map on $\M(B)$ factors through $\ell_{\infty}(B)$ approximately via ucp maps in point weak $^*$-topology. Since $B$ is $D$QWEP$_i$, by  Lemma ~\ref{prod qwep}, so is $\ell_{\infty}(B)$. Therefore, by Lemma ~\ref{factor qwep}, $\M(B)$ is $D$QWEP$_i$.
\end{proof}

We have the following transitivity result for $D$QWEP$_i$. We only show the $D$QWEP$_1$ case. The proof of the other case is similar. First we need the following lemma.

\begin{lemma}
\label{module}
Let $D$ be a $C^*$-algebra. If $D$ is $C$QWEP$_i$ for $i=1,2$, then so are $\mathcal{L(H}_D)$ and $\mathcal{L}^w(\HH_{D^{**}})$.
\end{lemma}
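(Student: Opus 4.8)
The plan is to treat the two module algebras separately, in each case identifying the relevant algebra as a multiplier algebra (or bidual thereof) built from $\K$ and $D$, and then feeding it into the stability results for $D$QWEP already established. The index $1$ conclusion attaches to $\LL(\HH_D)$ and the index $2$ conclusion to $\LL^w(\HH_{D^{**}})$, which is exactly what the transitivity statement to follow will consume.

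For the first class I would start from the identification $\LL(\HH_D)\simeq\M(\K\otimes D)$ recorded in the preliminaries. Since $D$ is $C$QWEP$_1$ and $\K$ is nuclear, Proposition~\ref{min qwep}(1), applied with the nuclear factor $\K$, shows that $\K\otimes_{\min}D=\K\otimes D$ is $C$QWEP$_1$. The corollary immediately preceding this lemma states that the multiplier algebra of a $D$QWEP$_i$ algebra is again $D$QWEP$_i$; applying it to $\K\otimes D$ yields that $\M(\K\otimes D)=\LL(\HH_D)$ is $C$QWEP$_1$, as desired.

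For the second class I would use the identification $\LL^w(\HH_{D^{**}})=\BB(\HH)\bar\otimes D^{**}=(\K\otimes D)^{**}$ coming from Remark~\ref{e} and Example~\ref{kd}. Since $D$ is $C$QWEP$_2$, Corollary~\ref{ds} gives that $D^{**}$ is $C$QWEP$_2$. On the other hand $\BB(\HH)$ is injective, so it has the WEP, i.e.\ it is relatively weakly injective in $\BB(\HH)=\LL^w(\HH_{\mathbb{C}})$; hence $\BB(\HH)$ is $\mathbb{C}$WEP$_2$, and in particular $\mathbb{C}$QWEP$_2$. Now I would invoke Proposition~\ref{min qwep}(2) with $M=\BB(\HH)$ (von Neumann, $\mathbb{C}$QWEP$_2$) and $N=D^{**}$ (von Neumann, $C$QWEP$_2$): the von Neumann tensor product $\BB(\HH)\bar\otimes D^{**}$ is $(\mathbb{C}\otimes_{\min}C)$QWEP$_2=C$QWEP$_2$, which is precisely $\LL^w(\HH_{D^{**}})$.

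The routine parts are the two algebra identifications and the bookkeeping of which proposition carries which index. The main obstacle is the $i=2$ case: one must avoid appealing to an unstated second-class analogue of the nuclear-tensor stability in Proposition~\ref{min wep}(1)/\ref{min qwep}(1), and instead repackage $\LL^w(\HH_{D^{**}})$ as the von Neumann tensor product $\BB(\HH)\bar\otimes D^{**}$ so that Proposition~\ref{min qwep}(2) applies with the trivial left factor $\mathbb{C}$. The two checks that make this work are that $\BB(\HH)$ genuinely qualifies as $\mathbb{C}$QWEP$_2$ through its WEP, and that $\mathbb{C}\otimes_{\min}C=C$, so that the resulting index collapses to the intended $C$.
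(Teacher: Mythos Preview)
Your argument for $\LL(\HH_D)$ is exactly the paper's: identify it as $\M(\K\otimes D)$, pass $C$QWEP$_1$ through the nuclear factor $\K$ via Proposition~\ref{min qwep}(1), and then invoke the multiplier-algebra stability (the paper spells this out via Theorem~\ref{factor1} and Lemma~\ref{factor qwep} rather than the packaged corollary, but it is the same).

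For $\LL^w(\HH_{D^{**}})$ your route is correct but genuinely different from the paper's. You go through Proposition~\ref{min qwep}(2), writing $\BB(\HH)\bar\otimes D^{**}$ as a von Neumann tensor with a trivial $\mathbb{C}$-coefficient on the left and using Corollary~\ref{ds} to get $D^{**}$ into the $C$QWEP$_2$ class. The paper instead recycles the first paragraph: it already has $\K\otimes D$ as $C$QWEP$_1$, observes $\LL^w(\HH_{D^{**}})=\BB(\HH)\bar\otimes D^{**}=(\K\otimes D)^{**}$, and applies Corollary~\ref{ds} once to conclude $(\K\otimes D)^{**}$ is $C$QWEP$_1$ (hence also $C$QWEP$_2$). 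This is shorter, avoids the bookkeeping with the $\mathbb{C}$ factor, and actually yields the stronger index-$1$ conclusion for $\LL^w(\HH_{D^{**}})$ directly, whereas your argument lands only on $C$QWEP$_2$. Your detour through Proposition~\ref{min qwep}(2) does work, but the ``main obstacle'' you describe disappears once you notice the bidual identification $(\K\otimes D)^{**}=\BB(\HH)\bar\otimes D^{**}$ lets you reuse the first half of the proof.
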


\begin{proof}
 If $D$ is $C$QWEP$_1$, then by Proposition ~\ref{min qwep}(1), so is $\mathcal{K}\otimes D$. By Theorem ~\ref{factor1}, $\mathcal{L(H}_D)=\mathcal{M}(\mathcal{K}\otimes D)$ factors through $\ell_{\infty}(\mathcal{K}\otimes D)$, and therefore, it is $D$QWEP$_1$, by Lemma ~\ref{factor qwep}. Hence it is also $D$QWEP$_2$.
 For the other case, it suffices to show that $\BB(\HH)\bar\otimes D^{**}$ is $D$QWEP$_1$. Note that $\BB(\HH)\bar\otimes D^{**}=(\K\otimes D)^{**}$ and $\K\otimes D$ is $D$QWEP$_1$. By Corollary ~\ref{ds}, $(\K\otimes D)^{**}$ is $D$QWEP$_1$, and hence it is $D$QWEP$_2$. 
\end{proof}

The following result shows the transitivity of the $D$QWEP$_i$ for $i=1,2$.

\begin{corollary}
Let $B$, $C$ and $D$ be $C^*$-algebras such that $B$ is $D$QWEP$_i$, and $D$ is $C$QWEP$_i$. Then $B$ is $C$QWEP$_i$.
\end{corollary}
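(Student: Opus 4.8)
The plan is to unwind the definition of $D$QWEP$_1$ and then chain together the stability properties already established: Lemma~\ref{module} (which promotes $C$QWEP of $D$ to $C$QWEP of the ambient module algebras), Lemma~\ref{wri qwep} (stability of $C$QWEP under relative weak injectivity), and Corollary~\ref{quball} (stability under quotients). I will carry out the $i=1$ case explicitly; the $i=2$ case is identical after replacing $\mathcal{L}(\HH_D)$ by $\mathcal{L}^w(\HH_{D^{**}})=\BB(\HH)\bar\otimes D^{**}$ and invoking the second half of Lemma~\ref{module}.

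First I would invoke the hypothesis that $B$ is $D$QWEP$_1$: by definition we may write $B=A/J$ for some C$^*$-algebra $A$ with the $D$WEP$_1$ and some closed ideal $J\subseteq A$. By the remark following Lemma~\ref{wri wep}, having the $D$WEP$_1$ is the same as $A$ being \emph{r.w.i.} in some $\mathcal{L}(\HH_D)$. Next I would use the hypothesis that $D$ is $C$QWEP$_1$: by Lemma~\ref{module} this forces $\mathcal{L}(\HH_D)$ to be $C$QWEP$_1$. Since $A$ is \emph{r.w.i.} in the $C$QWEP$_1$ algebra $\mathcal{L}(\HH_D)$, Lemma~\ref{wri qwep} then yields that $A$ itself is $C$QWEP$_1$.

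Finally, it remains to descend from $A$ to its quotient $B=A/J$. The quotient $^*$-homomorphism $\pi\colon A\to B$ is a surjective $^*$-homomorphism of C$^*$-algebras, hence it carries the closed unit ball of $A$ onto the closed unit ball of $B$, so Corollary~\ref{quball} applies and shows that $B$ is $C$QWEP$_1$. Alternatively, one may argue directly: $A$ being $C$QWEP$_1$ means $A=A'/J'$ with $A'$ having the $C$WEP$_1$, so $B=A/J$ is a quotient of a quotient of $A'$ and therefore a quotient of $A'$, realizing $B$ as a quotient of a $C$WEP$_1$ algebra.

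Since the whole argument is a short composition of previously proved stability results, I do not anticipate a genuine obstacle. The only point demanding care is the $i=2$ bookkeeping: one must check that the von Neumann algebra witnessing the $D$WEP$_2$ of $A$ is precisely the algebra $\mathcal{L}^w(\HH_{D^{**}})$ treated in Lemma~\ref{module}, using the identification $\mathcal{L}^w(\HH_{D^{**}})=\BB(\HH)\bar\otimes D^{**}$ so that Lemma~\ref{wri qwep} can be applied in the self-dual setting as well.
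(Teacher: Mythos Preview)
Your argument is correct, but it takes a different route from the paper's. The paper proves the $i=1$ case via the tensor characterization of $D$QWEP$_1$ (Theorem~\ref{tensor qwep}): given a ucp map $u\colon C^*\F_\infty\to B$, one first uses that $B$ is $D$QWEP$_1$ to get continuity of $u\otimes\id$ on $C^*\F_\infty\otimes_{max_1^D}C^*\F_\infty$, and then uses Lemma~\ref{module} (applied to $\mathcal{L}(\HH^u_D)$) together with the tensor characterization of $C$QWEP$_1$ to produce a continuous map $C^*\F_\infty\otimes_{max_1^C}C^*\F_\infty\to \mathcal{L}(\HH^u_D)\otimes_{max}C^*\F_\infty$ landing in $C^*\F_\infty\otimes_{max_1^D}C^*\F_\infty$; composing gives the desired continuity. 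Your proof bypasses the tensor machinery entirely and stays at the level of definitions plus the already-proved stability lemmas: Lemma~\ref{module} to promote $C$QWEP$_1$ from $D$ to $\mathcal{L}(\HH_D)$, Lemma~\ref{wri qwep} to pull it back to the r.w.i.\ subalgebra $A$, and then a quotient to reach $B$. Both arguments pivot on Lemma~\ref{module}; yours is shorter and more transparent, while the paper's illustrates that Theorem~\ref{tensor qwep} can serve as a self-contained black box for such transitivity statements.
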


\begin{proof}
We only show this for $i=1$. Let $\CF\subset \LL(\HH^u_D)$ be the universal representation. Since $B$ is $D$QWEP$_1$, by Theorem ~\ref{tensor qwep},  for all ucp maps $u: C^*\F_{\infty}\to B$, the map 
$u\otimes \id: C^*\F_{\infty}\otimes_{max_1^D} C^*\F_{\infty}\to B\otimes_{max}C^*\F_{\infty}$ is continuous, where $max_1^D$ is the norm induced from the inclusion 
$C^*\F_{\infty}\otimes C^*\F_{\infty}\subset \mathcal{L(H}^u_D)\otimes_{max} C^*\F_{\infty}$.
Since $D$ is $C$QWEP$_1$, so is $\mathcal{L(H}^u_D)$ by Lemma ~\ref{module}. Now by the tensor characterization of $C$QWEP$_1$, the map $w\otimes id: C^*\F_{\infty}\otimes_{max_1^C}C^*\F_{\infty}\to \mathcal{L(H}^u_D)\otimes_{max}C^*\F_{\infty}$ 
is continuous for all ucp maps $w:C^*\F_{\infty}\to\mathcal{L(H}^u_D)$. Now let $w$ be a faithful representation $C^*\F_{\infty}\to\mathcal{L(H}^u_D)$. Then we have following diagram
 \[
\xymatrix{
   C^*\F_{\infty} \underset{max_1^D}\otimes C^*\F_{\infty} \ar@ {^{(}->}[r] \ar [d]_{u\otimes \id}& \mathcal{L(H}^u_D) \underset{max}\otimes C^*\F_{\infty}\\
    B \underset{max}\otimes C^*\F_{\infty} & C^*\F_{\infty} \underset{max_1^C}\otimes C^*\F_{\infty}\ar[u]_{w\otimes \id} \ar@ {^{-}->}[ul] 
  }   
\]
Note that the image of $w\otimes \id$ is $C^*\F_{\infty} \otimes_{max_1^D} C^*\F_{\infty}$. Therefore, we get a continuous map from $C^*\F_{\infty} \otimes_{max_1^C} C^*\F_{\infty}$
to $B \otimes_{max} C^*\F_{\infty}$. This proves that $B$ is $C$QWEP$_1$.
\end{proof}

Now we are ready to establish the equivalence between the $D$QWEP notions by observing the following result.

\begin{theorem}
 \label{equi}
For a $C^*$-algebra $B$, the following conditions are equivalent:
\begin{enumerate}
 \item $B$ is $D$QWEP$_1$;
 \item $B$ is $D$QWEP$_2$;
 \item $B^{**}$ is $D^{**}$QWEP$_1$;
 \item $B^{**}$ is $D^{**}$QWEP$_2$.
\end{enumerate}
\end{theorem}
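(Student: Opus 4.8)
The plan is to reduce everything to a single ``coefficient-free'' equivalence and then transport it between $D$ and $D^{**}$. Concretely, I would first prove the \emph{core statement}: for \emph{arbitrary} C$^*$-algebras $X$ and $E$,
\[
X \text{ is } E\text{QWEP}_1 \iff X \text{ is } E\text{QWEP}_2 .
\]
Granting this, the equivalence (1)$\Leftrightarrow$(2) is the core applied to $X=B$, $E=D$, and (3)$\Leftrightarrow$(4) is the core applied to $X=B^{**}$, $E=D^{**}$. It then remains only to bridge the two pairs, for which I would prove the cycle (1)$\Rightarrow$(2)$\Rightarrow$(3)$\Rightarrow$(1). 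All the ingredients (Theorem~\ref{12}, Remark~\ref{wepqwep}, Example~\ref{kd}, Corollary~\ref{ds}, and the transitivity property of $D$QWEP$_i$ proved above) are stated for an arbitrary coefficient algebra, so I am free to instantiate them at $D$, at $D^{**}$, or at a generic $E$.

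For the core statement, the forward direction is immediate: $E$WEP$_1$ implies $E$WEP$_2$ by Theorem~\ref{12}, hence a quotient of an $E$WEP$_1$ algebra is a quotient of an $E$WEP$_2$ algebra, i.e. $E$QWEP$_1$ implies $E$QWEP$_2$. The reverse direction is the heart of the matter. Suppose $X$ is $E$QWEP$_2$. By Remark~\ref{wepqwep} (read with coefficient algebra $E$), any $E$WEP$_2$ algebra is $E^{**}$QWEP$_1$; since a quotient of a quotient is a quotient, $X$ itself is $E^{**}$QWEP$_1$. Next I would observe that $E^{**}$ is itself $E$QWEP$_1$: by Example~\ref{kd} the algebra $E$ has the $E$WEP$_1$, so $E$ is $E$QWEP$_1$, and Corollary~\ref{ds} upgrades this to $E^{**}$ being $E$QWEP$_1$. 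Feeding ``$X$ is $E^{**}$QWEP$_1$'' and ``$E^{**}$ is $E$QWEP$_1$'' into the transitivity property of $D$QWEP$_i$ yields that $X$ is $E$QWEP$_1$, completing the core.

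To bridge the two pairs, I would argue as follows. For (2)$\Rightarrow$(3): if $B$ is $D$QWEP$_2$ then, exactly as in the core's reverse step, Remark~\ref{wepqwep} gives that $B$ is $D^{**}$QWEP$_1$, and Corollary~\ref{ds} turns this into $B^{**}$ being $D^{**}$QWEP$_1$, which is (3). For (3)$\Rightarrow$(1): starting from $B^{**}$ being $D^{**}$QWEP$_1$, I would pair it with the fact that $D^{**}$ is $D$QWEP$_1$ (Example~\ref{kd} gives $D$WEP$_1$ for $D$, so $D$ is $D$QWEP$_1$, whence $D^{**}$ is $D$QWEP$_1$ by Corollary~\ref{ds}) and apply transitivity to conclude $B^{**}$ is $D$QWEP$_1$; a final application of Corollary~\ref{ds} brings this down to $B$ being $D$QWEP$_1$, i.e. (1). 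Together with (1)$\Rightarrow$(2) from the core this closes the cycle (1)$\Leftrightarrow$(2)$\Leftrightarrow$(3), and combined with (3)$\Leftrightarrow$(4) from the core all four statements are equivalent.

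The main obstacle I anticipate is the bookkeeping of coefficient algebras: the natural tool (Remark~\ref{wepqwep}) always pushes the coefficient from $E$ \emph{up} to $E^{**}$, whereas conclusions (1) and (3) are phrased over $D$ and $D^{**}$ respectively, so one must be able to ``pull the coefficient back down.'' The device that makes this work — and the step I would scrutinize most — is the combination of the self-WEP fact (by Example~\ref{kd}, $E$ has $E$WEP$_1$, hence $E^{**}$ is $E$QWEP$_1$) with the transitivity of $D$QWEP$_i$; this is exactly what converts $E^{**}$QWEP$_1$ into $E$QWEP$_1$, and it is used both inside the core and in the bridge (3)$\Rightarrow$(1). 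I would also verify explicitly that Remark~\ref{wepqwep} and the transitivity corollary hold verbatim with $D^{**}$ (and a generic $E$) in place of the fixed $D$, since both are invoked at several coefficient algebras at once.
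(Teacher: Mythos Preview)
Your argument is correct and in fact slightly more streamlined than the paper's. Both proofs share the easy direction (1)$\Rightarrow$(2) via Theorem~\ref{12} and the step (2)$\Rightarrow$(3) via Remark~\ref{wepqwep} plus Corollary~\ref{ds}. The difference lies in how one closes the loop. The paper runs the full cycle (1)$\Rightarrow$(2)$\Rightarrow$(3)$\Rightarrow$(4)$\Rightarrow$(1), and its (4)$\Rightarrow$(1) step is a hands-on argument: it unwinds $D^{**}$QWEP$_2$ to relative weak injectivity in $(\mathcal K\otimes D^{**})^{**}$ and then proves directly that $\mathcal K\otimes D^{**}$ is $D$QWEP$_1$ by factoring it through $\prod_n M_n(D^{**})$ and invoking Proposition~\ref{min qwep} and Lemma~\ref{factor qwep}. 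You bypass this computation entirely by packaging the descent ``$E^{**}$QWEP$_1\Rightarrow E$QWEP$_1$'' as the pair of observations (i) $E^{**}$ is $E$QWEP$_1$ (Example~\ref{kd} plus Corollary~\ref{ds}) and (ii) transitivity of relative QWEP. Since the transitivity corollary is proved in the paper just prior to Theorem~\ref{equi}, your route is legitimate and arguably cleaner; it also has the pleasant side effect of isolating the coefficient-free core ``$E$QWEP$_1\Leftrightarrow E$QWEP$_2$ for every $E$,'' from which (3)$\Leftrightarrow$(4) is a free instantiation rather than a separate cycle step. The paper's argument, by contrast, is more self-contained at this point and makes the role of the approximation $\mathcal K\otimes D^{**}\to\prod_n M_n(D^{**})$ explicit.
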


\begin{proof}
 (1)$\Rightarrow$(2): This follows from the fact that $D$WEP$_1$ implies $D$WEP$_2$. 

 (2)$\Rightarrow$(3): Suppose $B$ is $D$QWEP$_2$. Therefore, $B$ is the quotient of a $C^*$-algebra $A$ which is \emph{r.w.i.} in $\mathcal{L}^w(E_{D^{**}})$. By Remark \ref{wepqwep}, since $\mathcal{L}^w(E_{D^{**}})$ has the $D^{**}$WEP$_1$, it is $D^{**}$QWEP$_1$. Hence $A$ is $D^{**}$QWEP$_1$, and therefore, $B$ is $D^{**}$QWEP$_1$.

 (3)$\Rightarrow$(4): Follows from (1)$\Rightarrow$(2).

 (4)$\Rightarrow$(1):  Suppose $B^{**}$ is $D^{**}$QWEP$_2$, and therefore so is $B$ by Corollary ~\ref{ds}. Then $B$ is the quotient of a $C^*$-algebra $A$ which is \emph{r.w.i.} in $\mathcal{L}^w(E_{D^{****}})$. We have 
 \[
 A \overset{r.w.i.}\subset\mathcal{L}^w(E_{D^{****}})\overset{r.w.i.}\subset\mathbb{B}(\ell_2)\bar{\otimes} D^{****}=(\mathcal{K}\underset{min}\otimes D^{**})^{**}.
 \]
Therefore, it suffices to show that $\mathcal{K}\otimes_{min} D^{**}$ is $D$QWEP$_1$. Notice that $\mathcal{K}\otimes_{min} D^{**}$ factors through $\prod_n M_n(D^{**})$ approximately via ucp maps in point-norm topology, since $\cup M_n(D^{**})$ is norm-dense in $\mathcal{K}\otimes_{min} D^{**}$. 
Now since $D$ has the $D$WEP$_1$, $D^{**}$ is $D$QWEP$_1$. Therefore, by Proposition ~\ref{min qwep}, so is $M_n(D^{**})=M_n\otimes_{min}D^{**}$. Hence by Lemma ~\ref{factor qwep}, $\mathcal{K}\otimes_{min} D^{**}$ is $D$QWEP$_1$. This finishes the proof.
 \end{proof}

\section{Illustrations}

In Section 3, we showed that $D$WEP$_1$ implies $D$WEP$_2$. Our first example will show the converse is not true, 
and hence the two notions of $D$WEP are not equivalent. 

\begin{example} 
\normalfont
\label{Bl2} Let $D = \BB(\ell_2)$. Note that $\mathcal{L(H}_D) = \M(\K\otimes \BB(\ell_2))$, and $\K\otimes \BB(\ell_2)$ has the WEP, 
and so does $\M(\K\otimes \BB(\ell_2))$. Therefore the two notions of $D$WEP$_1$ 
and WEP coincide. On the other hand, the $D$WEP$_2$ of a $C^*$-algebra is the same as being \emph{r.w.i.} in $\BB(\HH)\bar{\otimes} {\BB(\ell_2)}^{**}$. Notice that 
$\BB(\mathcal{H})\bar{\otimes} \BB(\ell_2)^{**}=(\mathcal{K} \otimes \BB(\ell_2))^{**}$ is QWEP. Therefore by Proposition ~\ref{bl2qwep}, 
$D$WEP$_2$ is equivalent to QWEP. 
Hence if $A$ is a QWEP $C^*$-algebra without the WEP, for instance $C^*_{\lambda}\F_n$, then $A$ has the $D$WEP$_2$ but not the $D$WEP$_1$, 
for $D = \BB(\ell_2)$.
\end{example}

Now we are ready to see some examples of relative WEP and QWEP over special classes of C$^*$-algebras. 

\begin{proposition}
\label{ncl}
 Let $D$ be a nuclear $C^*$-algebra. Then a $C^*$-algebra $A$ has the $D$WEP$_i$ for $i=1,2$ if and only if it has the WEP. 
\end{proposition}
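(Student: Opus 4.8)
The plan is to prove both implications by reducing everything to facts already available in the excerpt, treating the two directions and the two indices separately. The easy direction, WEP $\Rightarrow$ $D$WEP$_i$, holds for \emph{every} $D$ and uses no nuclearity: a WEP algebra is relatively weakly injective in every unital C$^*$-algebra containing it (Corollary~\ref{wepwri}), in particular in any $\mathcal{L}(E_D)$ and in any $\mathcal{L}^w(E_{D^{**}})$, which is exactly the definition of $D$WEP$_1$ and $D$WEP$_2$. So the entire content of the proposition is the forward implication $D$WEP$_i \Rightarrow$ WEP under the hypothesis that $D$ is nuclear.

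For $i=1$ I would simply invoke Corollary~\ref{trand}. Since $D$ is nuclear it has the WEP, and Corollary~\ref{trand} states precisely that $D$WEP$_1$ together with the WEP of $D$ forces $A$ to have the WEP. Nothing further is needed in this case.

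For $i=2$ the argument behind Corollary~\ref{trand} is unavailable (indeed the remark following it warns that it fails in the WEP$_2$ setting), so I would argue through injectivity. Recall that $D$WEP$_2$ is equivalent to being \emph{r.w.i.} in $\BB(\HH)\bar\otimes D^{**} = (\K\otimes D)^{**}$ (the remark after Lemma~\ref{wri wep}, together with Example~\ref{kd}). Nuclearity of $D$ gives nuclearity of $\K\otimes D$, and the bidual of a nuclear C$^*$-algebra is an injective von Neumann algebra; hence $N := \BB(\HH)\bar\otimes D^{**}$ is injective. I would then establish the general principle that any C$^*$-algebra which is \emph{r.w.i.} in an injective algebra has the WEP: given a ucp map $\psi: N \to A^{**}$ with $\psi|_A = \id_A$ and a faithful representation $A \subseteq \BB(\K)$, the injectivity of $N$ lets me extend the unital $*$-homomorphism $A \hookrightarrow N$ to a ucp map $\Phi: \BB(\K) \to N$ with $\Phi|_A$ equal to the inclusion. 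Then $\psi \circ \Phi: \BB(\K) \to A^{**}$ is ucp and restricts to $\id_A$, so $A$ is \emph{r.w.i.} in $\BB(\K)$, i.e. $A$ has the WEP.

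The only genuinely nontrivial ingredient is the structural fact that a nuclear C$^*$-algebra has injective bidual (equivalently, $\K\otimes D$ nuclear $\Rightarrow (\K\otimes D)^{**}$ injective); once this is available, both the Arveson-type extension step and the reduction to relative weak injectivity in $\BB(\HH)$ are routine. I expect the main obstacle to be purely organizational in the $i=2$ case, namely identifying the relevant parent algebra as $(\K\otimes D)^{**}$ and then cleanly exploiting its injectivity; the $i=1$ case is essentially immediate from Corollary~\ref{trand}.
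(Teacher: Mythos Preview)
Your proposal is correct. The easy direction and the $i=1$ case are handled just as one would expect. For $i=2$, your route and the paper's coincide in spirit---both exploit that nuclearity of $D$ makes the relevant parent algebra injective---but the execution differs. The paper uses injectivity of $D^{**}$ to obtain a conditional expectation $\BB(\HH)\to D^{**}$, then applies the operator-space identification $CB(S_1,M)\simeq \BB(\ell_2)\bar\otimes M$ to embed $\BB(\ell_2)\bar\otimes D^{**}$ relatively weakly injectively into $\BB(\ell_2\otimes\HH)$ (with a unitization step), concluding that $A$ is \emph{r.w.i.} in $\BB(\ell_2\otimes\HH)$. You instead observe directly that $N=(\K\otimes D)^{**}$ is injective and use Arveson extension to pull back through any faithful representation $A\subseteq\BB(\K)$. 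Your argument is shorter and avoids the $CB(S_1,\cdot)$ machinery and the unitization trick; the paper's argument has the mild conceptual advantage of explicitly exhibiting the parent algebra as \emph{r.w.i.} in a concrete $\BB(\tilde\HH)$. Note also that the paper does not treat $i=1$ separately: it simply uses $D$WEP$_1\Rightarrow D$WEP$_2$ (Theorem~\ref{12}) and proves only the $i=2$ implication, which makes your appeal to Corollary~\ref{trand} an unnecessary (though harmless) detour.
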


\begin{proof}
 Suppose $A$ has the WEP. Therefore $A$ has the $D$WEP$_1$, and hence the $D$WEP$_2$. 

Now assume $A$ has the $D$WEP$_2$, i.e. it is \emph{r.w.i.} in $\BB(\ell_2)\bar{\otimes} D^{**}$.
Since $D$ is nuclear, $D^{**}$ is injective. Hence we have $D^{**} \subseteq \BB(\mathcal{H})\overset{\E}\to D^{**}$, where $\E$ is a conditional expectation. 
Let $CB(A,B)$ be the space of completely bounded maps from $A$ to $B$. Therefore we have
\[
 CB(S_1, D^{**})\overset{\pi} \hookrightarrow CB(S_1, \BB(\mathcal{H}))\overset{\varphi}\to CB(S_1, D^{**}),
\]
where $S_1$ is the algebra of trace class operators, $\pi$ is a $^*$-homomorphism, and $\varphi$ acts by composing the maps in $CB(S_1, \BB(\mathcal{H}))$ and $\E$. 
Note that by operator space theory $CB(S_1, D^{**})\simeq\BB(\ell_2)\bar{\otimes} D^{**}$ and $CB(S_1, \BB(\mathcal{H}))\simeq\BB(\ell_2) \bar{\otimes} \BB(\mathcal{H})=\BB(\ell_2 \otimes \mathcal{H})$. 
Hence we have the maps $\BB(\ell_2)\bar{\otimes} D^{**}\overset{\pi}\to \BB(\ell_2)\bar{\otimes}\BB(\mathcal{H})= \BB(\ell_2 \otimes \mathcal{H})\overset{\varphi}\to \BB(\ell_2)\bar{\otimes} D^{**}$. Now by Remark ~\ref{unitization}
we can unitize these two maps. Therefore $A$ is \emph{r.w.i.} in $\BB(\ell_2\otimes\mathcal{H})$, and hence it has the WEP.
\end{proof}

After nuclear C$^*$-algebras, it is natural to consider the relative WEP for an exact $C^*$-algebra $D$. For convenience, we consider the following stronger version of weak exactness property. 
A von Neumann algebra $M\subseteq \BB(\mathcal{H})$ is said to be \emph{algebraically weakly exact}, (a.w.e. for short), if there exists a weakly dense exact $C^*$-algebra $D$ in $M$. 
By \cite{Ki2}, we know that the a.w.e. implies the weak exactness. 

Notice that the unitization trick works better in $\mathcal{C}_2$ category, and hence we have the following.

\begin{proposition}
\label{exact}
 A $C^*$-algebra has the $D$WEP$_2$ for some exact $C^*$-algebra $D$ if and only if it is relatively weakly injective in an a.w.e. von Neumann 
algebra.
\end{proposition}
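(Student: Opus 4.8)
The plan is to prove both implications by pinning down the relevant von Neumann algebra explicitly; in both directions the correct algebra turns out to be $\BB(\HH)\bar\otimes D^{**}$ for the exact C$^*$-algebra $D$. Throughout I will use the fact, recorded in the remark following Lemma~\ref{wri wep}, that a C$^*$-algebra has the $D$WEP$_2$ if and only if it is \emph{r.w.i.} in $\BB(\HH)\bar\otimes D^{**}$.

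For the forward direction, suppose $A$ has the $D$WEP$_2$ for some exact $D$, so that $A$ is \emph{r.w.i.} in $M:=\BB(\HH)\bar\otimes D^{**}$. I would verify directly that $M$ is a.w.e.\ by exhibiting $\K\otimes D$ as a weakly dense exact C$^*$-subalgebra: since $\K$ is nuclear and $D$ is exact, the minimal tensor product $\K\otimes D$ is exact, and since $(\K\otimes D)^{**}=\BB(\HH)\bar\otimes D^{**}$ as noted in Example~\ref{kd}, the algebra $\K\otimes D$ is weak$^*$-dense in $M$. Hence $A$ is \emph{r.w.i.} in the a.w.e.\ von Neumann algebra $M$, which is exactly what is required.

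For the converse, suppose $A$ is \emph{r.w.i.} in an a.w.e.\ von Neumann algebra $M$, with $D\subseteq M$ a weakly dense exact C$^*$-algebra. The first step is to realize $M$ as a central cutdown of $D^{**}$: the inclusion $D\hookrightarrow M\subseteq\BB(\HH)$ extends to a normal $^*$-homomorphism $D^{**}\to M$ whose image is weakly closed and contains the weakly dense copy of $D$, hence equals $M$; its kernel is $(1-z)D^{**}$ for a central projection $z$, so $M=zD^{**}$. The second step invokes the unitization trick of Remark~\ref{unitization} with $N=M=zD^{**}$ sitting inside $D^{**}$, which produces the partial isometry $v$ and hence a conditional expectation showing that $\BB(\ell_2)\bar\otimes M$ is \emph{r.w.i.} in $\BB(\ell_2)\bar\otimes D^{**}$.

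Finally I would assemble the chain of relative weak injectivities. Compressing by $e_{11}\otimes 1$ gives a normal conditional expectation of $\BB(\ell_2)\bar\otimes M$ onto its corner $M$, so $M$ is \emph{r.w.i.} in $\BB(\ell_2)\bar\otimes M$. Combining $A\overset{r.w.i.}{\subseteq}M\overset{r.w.i.}{\subseteq}\BB(\ell_2)\bar\otimes M\overset{r.w.i.}{\subseteq}\BB(\ell_2)\bar\otimes D^{**}$ and applying transitivity (Lemma~\ref{wri wri}) shows that $A$ is \emph{r.w.i.} in $\BB(\HH)\bar\otimes D^{**}$, i.e.\ $A$ has the $D$WEP$_2$ for the exact algebra $D$. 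I expect the main obstacle to be the converse direction: correctly establishing the identification $M=zD^{**}$ and checking that Remark~\ref{unitization} applies here, namely that $1\otimes z$ is properly infinite in $\BB(\ell_2)\bar\otimes D^{**}$ and equivalent to the identity so that the partial isometry $v$ exists. Once that is in place, the transitivity bookkeeping is routine.
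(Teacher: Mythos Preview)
Your proposal is correct and follows essentially the same route as the paper: both directions identify $\BB(\HH)\bar\otimes D^{**}$ as the target algebra, use exactness of $\K\otimes D$ together with $(\K\otimes D)^{**}=\BB(\HH)\bar\otimes D^{**}$ for the forward implication, and in the converse realize $M$ as $zD^{**}$ and invoke the unitization trick of Remark~\ref{unitization}. You have spelled out the intermediate step $M\overset{r.w.i.}{\subseteq}\BB(\ell_2)\bar\otimes M$ via the corner projection, which the paper leaves implicit, but otherwise the arguments coincide.
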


\begin{proof}
 Suppose a $C^*$-algebra $A$ has the $D$WEP$_2$, then $A$ is \emph{r.w.i.} in $\BB(\mathcal{H})\bar{\otimes}D^{**}$. Since both $\K$ and $D$ are 
exact C$^*$-algebras, so is $\K\otimes D$. Note that $\K\otimes D$ is weakly dense in $(\K\otimes D)^{**}=\BB(\mathcal{H})\bar{\otimes}D^{**}$. 
We have $\BB(\mathcal{H})\bar{\otimes}D^{**}$ is a.w.e.

For the other direction, suppose $A$ is \emph{r.w.i.} in an a.w.e von Neumann algebra $M$. Let $D$  be an exact $C^*$-algebra with $D''=M$.
Then there exists a central projection $z$ in $D^{**}$ such that $M=zD^{**}$. Hence we have completely positive maps 
$M \hookrightarrow D^{**} \to M$, which preserves the identity on $M$. Therefore by unitization $M$ is \emph{r.w.i.} in $\BB(\mathcal{H})\bar{\otimes}D^{**}$ for some infinite dimensional Hilbert space $\HH$. 
Hence if $A$ is \emph{r.w.i.} in $M$, 
then it is also \emph{r.w.i.} in $\BB(\mathcal{H})\bar{\otimes}D^{**}$, and therefore it has the $D$WEP$_2$.
\end{proof}

As we see, the nuclear-WEP is equivalent to the WEP. But the exact-WEP is different. 

\begin{example}
\normalfont
Let $\F_2$ be the free group of two generators. Then it is exact and hence $C^*_{\lambda}\F_2$ is exact and $L\F_2$ is weakly exact. Since 
$C^*_{\lambda}\F_2$ is \emph{r.w.i.} in $L\F_2$, by Proposition ~\ref{exact}, $C^*_{\lambda}\F_2$ has the $D$WEP$_2$ for $D=C^*_{\lambda}\F_2$. 
But $C^*_{\lambda}\F_2$ does not have the WEP, since the WEP of a reduced group C$^*$-algebra is equivalent to the amenability of the group (see Proposition 3.6.9 in \cite{BrOz}).
\end{example}

Now we consider the full group $C^*$-algebra of free group $C^*\F_\infty$. Since it is universal in the sense that for any unital separable 
$C^*$-algebra $A$, we have a quotient map $q: C^*\F_\infty \to A$. By the unitization trick, we have the following.

\begin{proposition}
 Let $A$ be a unital separable $C^*$-algebra. Then it has the $D$WEP$_2$ for $D=C^*\F_\infty$.
\end{proposition}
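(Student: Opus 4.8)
The plan is to read off $D$WEP$_2$ in its concrete form—namely, relative weak injectivity in $\BB(\HH)\bar\otimes D^{**}$, as recorded in the remark following Lemma~\ref{wri wep}—and then to combine the universal property of $C^*\F_{\infty}$ with the unitization trick of Remark~\ref{unitization}. So it suffices to exhibit a Hilbert space $\HH$ and a ucp map $\BB(\HH)\bar\otimes (C^*\F_{\infty})^{**}\to A^{**}$ restricting to $\id_A$.

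First I would produce the quotient map. Since $A$ is unital and separable, it is generated by a countable family of unitaries $\{u_i\}$ (each element of a unital C$^*$-algebra is a finite linear combination of unitaries, and a countable dense set needs only countably many of them); sending the free generators $g_i$ of $\F_\infty$ to $u_i$ yields a unital $^*$-homomorphism $q\colon C^*\F_\infty \to A$ whose range is dense and, being the image of a $^*$-homomorphism, closed, so $q$ is surjective. Writing $A = C^*\F_\infty / J$ with $J=\ker q$ and dualizing, the normal surjection $q^{**}\colon (C^*\F_\infty)^{**}\to A^{**}$ has weak$^*$-closed ideal kernel, whence $A^{**}$ is identified with a central summand: $A^{**}=z(C^*\F_\infty)^{**}$ for a central projection $z\in (C^*\F_\infty)^{**}$.

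Next I would assemble a short chain of relatively weakly injective inclusions ending in $\BB(\ell_2)\bar\otimes (C^*\F_\infty)^{**}$. Embedding $A\hookrightarrow A^{**}\hookrightarrow \BB(\ell_2)\bar\otimes A^{**}$ through a minimal projection $e\in\BB(\ell_2)$ via $a\mapsto e\otimes a$, the compression $T\mapsto (e\otimes 1)T(e\otimes 1)$ is a normal ucp map onto the corner $\cong A^{**}$ that restricts to the identity on $A$; hence $A$ is \emph{r.w.i.} in $\BB(\ell_2)\bar\otimes A^{**}$. Since $A^{**}=z(C^*\F_\infty)^{**}$, the unitization trick of Remark~\ref{unitization} (taken with $N=A^{**}$ and $M=(C^*\F_\infty)^{**}$) shows $\BB(\ell_2)\bar\otimes A^{**}$ is \emph{r.w.i.} in $\BB(\ell_2)\bar\otimes (C^*\F_\infty)^{**}$. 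By the transitivity of relative weak injectivity (Lemma~\ref{wri wri}), $A$ is \emph{r.w.i.} in $\BB(\ell_2)\bar\otimes (C^*\F_\infty)^{**}$, which is exactly the assertion that $A$ has the $D$WEP$_2$ for $D=C^*\F_\infty$.

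The verifications that the corner compression is ucp and fixes $A$, and that $q^{**}$ produces a central projection, are routine. I expect the only point demanding care to be the unitization step, where one must confirm that the central-corner inclusion $\BB(\ell_2)\bar\otimes A^{**}\hookrightarrow \BB(\ell_2)\bar\otimes (C^*\F_\infty)^{**}$ is genuinely unitized so that Remark~\ref{unitization} applies verbatim; this is precisely what proper infiniteness of $1\otimes z$ in the type I$_\infty$ tensor factor guarantees. Thus the main (and fairly modest) obstacle is bookkeeping the identifications rather than any substantive analytic difficulty.
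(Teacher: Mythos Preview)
Your proposal is correct and follows essentially the same approach as the paper: obtain the quotient $q\colon C^*\F_\infty\to A$, identify $A^{**}=z(C^*\F_\infty)^{**}$ via a central projection, and then invoke the unitization trick of Remark~\ref{unitization} to conclude that $A$ (through $A^{**}$) is \emph{r.w.i.} in $\BB(\HH)\bar\otimes (C^*\F_\infty)^{**}$. The only cosmetic difference is ordering: the paper first shows $A^{**}$ has the $D$WEP$_2$ and then passes to $A$ via $A$ \emph{r.w.i.} in $A^{**}$, whereas you route through $A$ \emph{r.w.i.} in $\BB(\ell_2)\bar\otimes A^{**}$ before unitizing; both are the same argument.
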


\begin{proof}
 Since we have a quotient map $q: C^*\F_\infty \to A$, there exists a central projection $z$ in ${C^*\F_\infty}^{**}$ such that $A^{**}=z{C^*\F_\infty}^{**}$. 
Hence we have an embedding $A^{**}\hookrightarrow \BB(\mathcal{H})\bar{\otimes}{C^*\F_\infty}^{**}$ with a completely positive map from $\BB(\mathcal{H})\bar{\otimes}{C^*\F_\infty}^{**}$ to $A^{**}$ 
by multiplying $1\otimes z$. By the unitization trick in Remark ~\ref{unitization}, $A^{**}$ has the $D$WEP$_2$ for $D=C^*\F_\infty$ and so does 
$A$, since $A$ is \emph{r.w.i.} in $A^{**}$. 
\end{proof}

It is natural and even more interesting to ask whether the full group C$^*$-algebra $C^*\F_\infty$ has $D$WEP, for $D$ is the reduced group 
C$^*$-algebra $C^*_{\lambda}\F_2$. In fact, this is related to the QWEP conjecture. If $\CF$ has the $D$WEP$_1$ for some WEP algebra $D$, then it has the WEP by Corollary ~\ref{trand} of transitivity. 
If $\CF$ does not have the $D$WEP$_1$ for some C$^*$-algebra $D$, then it does not have the WEP either.  
At the time of writing, we do not have an answer for this question. 

Now let us discuss some properties of being module QWEP relative to some special classes of C$^*$-algebras. In the rest of this section, we will examine the relation between one of the equivalent statements 
of Theorem ~\ref{equi} (for example statement (1), $B$ is $D$QWEP$_i$), and the statement that $B^{**}$ is $D^{**}$WEP$_i$, for either $i=1$ or $2$.

\begin{proposition}
 Let $B$ be a C$^*$-algebra. If $B^{**}$ has the $D^{**}$WEP$_i$, then $B$ is $D$QWEP$_i$, for $i=1,2$.
\end{proposition}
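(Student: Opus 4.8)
The plan is to deduce this from the four-way equivalence already recorded in Theorem \ref{equi}, which asserts that $B$ is $D$QWEP$_1$, $B$ is $D$QWEP$_2$, $B^{**}$ is $D^{**}$QWEP$_1$, and $B^{**}$ is $D^{**}$QWEP$_2$ are all equivalent. Thus the entire content of the proposition will be obtained once I promote the hypothesis ``$B^{**}$ has the $D^{**}$WEP$_i$'' to the assertion ``$B^{**}$ is $D^{**}$QWEP$_i$,'' after which Theorem \ref{equi} finishes the job with no further work.

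The first and essentially only genuine step is the elementary observation that WEP implies its quotient version. By definition, a C$^*$-algebra is $D^{**}$QWEP$_i$ exactly when it arises as a quotient of some C$^*$-algebra carrying the $D^{**}$WEP$_i$. Since $B^{**}$ is a quotient of itself via the identity map, and by hypothesis $B^{**}$ already has the $D^{**}$WEP$_i$, I conclude immediately that $B^{**}$ is $D^{**}$QWEP$_i$. For $i=1$ this is precisely statement (3) of Theorem \ref{equi}, and for $i=2$ it is statement (4).

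Applying Theorem \ref{equi} then closes the argument: statement (3) is equivalent to statement (1), yielding that $B$ is $D$QWEP$_1$, and statement (4) is equivalent to statement (2), yielding that $B$ is $D$QWEP$_2$. Hence $B$ is $D$QWEP$_i$ for both $i=1,2$, as desired.

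I do not expect any real obstacle here: the substantive machinery—transferring relative QWEP between a C$^*$-algebra and its double dual, and identifying the two relative QWEP notions—is entirely absorbed into Theorem \ref{equi}. The single point deserving care is to emphasize that the definition of $D$QWEP$_i$ admits the trivial (identity) quotient, so that the relative WEP of $B^{**}$ literally furnishes the relative QWEP of $B^{**}$ without constructing any auxiliary algebra.
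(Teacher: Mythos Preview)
Your proof is correct and follows essentially the same approach as the paper's own proof: both observe that $B^{**}$ having the $D^{**}$WEP$_i$ makes $B^{**}$ trivially $D^{**}$QWEP$_i$ (via the identity quotient), and then invoke Theorem~\ref{equi} to conclude that $B$ is $D$QWEP$_i$. The paper's argument is simply a two-line compression of what you wrote.
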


\begin{proof}
 Suppose $B^{**}$ has the $D^{**}$WEP$_i$, and hence $B^{**}$ is $D^{**}$QWEP$_i$ by the trivial quotient. By Theorem \ref{equi}, $B$ is $D$QWEP$_i$. 
\end{proof}

For some C$^*$-algebra $D$, the four equivalent statements in Theorem \ref{equi} are equivalent to the statement that $B^{**}$ has the $D^{**}$WEP$_i$. 
But this is not true in general. We will show examples of both circumstances. 

\begin{example}
\normalfont
Let $D=\BB(\ell_2)$. Then a $C^*$-algebra $B$ is $D$QWEP$_i$ if and only if $B^{**}$ has the $D^{**}$WEP$_i$, 
since they are both equivalent to $B$ being QWEP. Indeed, if $B$ is $D$QWEP$_1$, then $B=A/J$
and $A$ has the $D$WEP$_1$. Since $\mathcal{L(H}_D)$ has the WEP as shown in Example \ref{Bl2}, so does $A$, and hence $B$ is 
QWEP. On the other hand, having
$\BB(\ell_2)^{**}$WEP$_1$ is equivalent to being \emph{r.w.i.} in $\M(\K\otimes \BB(\ell_2)^{**})$, which is QWEP. Hence $B^{**}$ is QWEP. 
By Proposition ~\ref{dd}, $B$ is QWEP as well. 
\end{example}

\begin{example}
\normalfont
\label{nclq}
 Let $D$ be a nuclear C$^*$-algebra. Then the above statements are not equivalent. 
Indeed, it follows from Proposition ~\ref{ncl} that a C$^*$-algebra is $D$QWEP$_i$ if and only if it is QWEP. On the other hand, 
assume that $B^{**}$ has the $D^{**}$WEP$_1$. Note that 
$D^{**}$WEP$_1$ implies $D$WEP$_2$ by Remark ~\ref{vn wep}, which is equivalent to WEP by Proposition ~\ref{ncl}, and $B^{**}$ has the WEP if and only if it is injective. Therefore 
the fact that a C$^*$-algebra $B$ is $D$QWEP$_i$ does not imply that $B^{**}$ has the $D^{**}$WEP$_1$. 
\end{example}

\begin{example}
\normalfont
 For a von Neumann algebra $M$, let us compare the properties $M$QWEP$_1$ of $B$ and the $M^{**}$WEP$_1$ of $B^{**}$. We have the following 
partial results.

Case (i): $M$ is of type I$_n$. Then $M$ is subhomogeneous, which is equivalent to nuclearity. By Example ~\ref{nclq}, these two statements are not equivalent.  

Case (ii): $M$ is of type I$_{\infty}$, then $\BB(\ell_2)\bar{\otimes}M$ is \emph{r.w.i.} in $M$. Suppose $B$ is $M$QWEP$_1$, then 
$B$ is a quotient of a $C^*$-algebra $A$ which is \emph{r.w.i.} in $\BB(\ell_2)\bar{\otimes}M$. Hence $B^{**}$ is \emph{r.w.i.} in $A^{**}$ and hence in 
$(\BB(\ell_2)\bar{\otimes}M)^{**}$, and hence in $M^{**}$. Since $M^{**}$ is isomorphic to $\LL(\HH_{M^{**}})$ for 1-dimensional Hilbert 
space $\mathcal{H}$, it follows that $B^{**}$ has the $M^{**}$WEP$_1$.

Case (iii): $M$ is of type II$_\infty$ or III, then $\BB(\ell_2)\bar{\otimes}M \simeq M$. By a similar argument to that of Case (ii), we have the same conclusion.

Case (iv): $M$ is of type II$_1$ and a McDuff factor, i.e. $M\bar{\otimes}R \simeq M$. Then we have
\[
 M\simeq M\bar{\otimes}R \simeq M\bar{\otimes}R\bar{\otimes}R \supseteq M\bar{\otimes}R \bar{\otimes} L_{\infty}[0,1] 
\supseteq M\bar{\otimes} \prod_{n=1}^{\infty} M_n \supseteq M\bar{\otimes} \BB(\ell_2)
\]
with conditional expectation from the larger algebra to the smaller for each inclusion. Hence $M\bar{\otimes} \BB(\ell_2)$ is \emph{r.w.i.} in $M$. 
By the same argument above, the equivalence is established.

Problem: $M$ is a non-McDuff II$_1$ factor. At the time of writing, we do not have an affirmative answer for this case. 
\end{example}

\section{Application to C$^*$-norms}

In this section, we will discuss some application of our tensor norm $max^D$ constructed in Section 3, to C$^*$-norms. 
We will follow the approach in \cite{OP14} to construct norms on $A\otimes B$ for C$^*$-algebras $A$ and $B$. 

Let $E$ be a $n$-dimensional subspace in $B$, and $C^*\langle E\rangle$ be the separable unital C$^*$-subalgeba of $B$ 
generated by $E$, which contains $E$ completely isometrically. For free group of countably infinite generators $\F_\infty$, we have a quotient map 
$\CF \to B$. Let $q: C^*\langle E\rangle \ast C^*\F_\infty \to B$ denote the free product of the inclusion $C^*\langle E\rangle \subset 
B$ and the quotient map $C^*\F_\infty \to B$, and let $I=\ker(q)$, so that we have $B \simeq (\CE \ast \CF) /I$. Following \cite{OP14}, 
let 
\begin{align}\label{enorm}
 A\underset{E}\otimes B = \displaystyle\frac{A\otimes_{min} (\CE \ast \CF)}{A\otimes_{min} I}.
\end{align}

Similarly, we can construct a new norm using the $max^D_1$ norm defined in Section 3. Recall that for a universal inclusion 
$A\subset \LL(\HH^u_D)$, the $max^D_1$ norm is the induced tensor norm from the inclusion 
$A\otimes C \subset \LL(\HH^u_D) \otimes_{max} C$. Now we define 
\begin{align}\label{ednorm}
 A\underset{D,E}\otimes B = \displaystyle\frac{A\otimes_{max^D_1} (\CE \ast \CF)}{A\otimes_{max^D_1} I}.
\end{align}

By their constructions, it is easy to see the following continuous inclusions
\[
 A\underset{min}\otimes B \supseteq A\underset{E}\otimes B \supseteq A\underset{D,E}\otimes B \supseteq A\underset{max}\otimes B. 
\]
The goal of this section is to determine the conditions which distinguish the above norms, and make the inclusions strict.

We will follow the notations in \cite{OP14}. Let us first recall the operator space duality $F^*\otimes_{min} E \subset CB(F, E)$ isometrically (see Theorem B.13 in \cite{BrOz}). 
This gives us a correspondence between a tensor $x = \sum_k f^*_k\otimes e_k \in F^*\otimes E$, and a map 
$\varphi_x: F \to E$ given by $\varphi_x(f) = \sum_k f^*_k (f)e_k$, with $\|x\|_{\min} = \|\varphi_x\|_{\cb}$. 
For finite dimensional operator space $E$, we denote by $t_E$ the ``identity'' element in $E^* \otimes E$. 
Note that $\|t_E\|_{\min} = 1$ and that any norm of $t_E$ is independent of embeddings $E^*\hookrightarrow \BB(\ell_2)$ and 
$E\hookrightarrow \BB(\ell_2)$. 

For any $n\in \mathbb{N}$, let $\OS$ denote the metric space of all $n$-dimensional operator spaces, equipped with the 
completely bounded Banach-Mazur distance. Note that by \cite{JP95}, $\OS$ is non-separable for $n\geq 3$. If $A$ is a separable C$^*$-algebra, 
then the set $\OS(A)$ of all $n$-dimensional operator subspaces of $A$ is a separable subset of $\OS$. 

The first lemma will help us distinguish $\|\cdot\|_{E,D}$ and $\|\cdot\|_{\min}$.  
\begin{lemma}\label{cbfact}
 Let $E$ and $F$ be subspaces of C$^*$-algebra $B$, and $E^*$, $F^*$ be subspaces of $C^*$-algebra $A$. 
Then $\|t_F\|_{E,D} \geq d_{cb}(F, \OS(\CF))$, where $d_{cb}(F, \OS(\CF)) = \inf \{d_{cb}(F, G)\text{ }|\text{ } G\in\OS(\CF)\}$.
\end{lemma}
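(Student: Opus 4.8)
The plan is to read $\|t_F\|_{E,D}$ as a quotient norm and to convert a lift of $t_F$ into a completely bounded factorization of $\id_F$ through a finite dimensional subspace of a free product. Write $\mathcal A=\CE\ast\CF$, let $q:\mathcal A\to B$ be the quotient map with $I=\ker q$, so that by the definition \eqref{ednorm} of $A\otimes_{D,E}B$ we have
\[
\|t_F\|_{E,D}=\inf\bigl\{\,\|w\|_{max^D_1}\ :\ w\in A\otimes\mathcal A,\ (\id\otimes q)(w)=t_F\,\bigr\},
\]
the infimum running over lifts of $t_F$. Since the $max^D_1$ norm dominates the minimal norm (this is the first inclusion in the chain displayed above), one has $\|w\|_{max^D_1}\ge\|w\|_{\min}$, so it suffices to bound $\inf_w\|w\|_{\min}$ from below.

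First I would pass from a lift to a completely bounded lifting map. Using the operator space duality $F^*\otimes_{\min}X=CB(F,X)$ together with the hypothesis that $F^*\subseteq A$ completely isometrically, the restriction map $A^*\to(F^*)^*=F$ is a complete metric surjection, hence (by the operator-space Hahn--Banach theorem, up to $\e$) admits a section $s:F\to A^*$ with $\|s\|_{\cb}\le 1$. Slicing $w$ by $s$ produces $v=v_w:F\to\mathcal A$, $v(f)=(s(f)\otimes\id)(w)$, with $\|v\|_{\cb}\le\|w\|_{\min}$; writing $t_F=\sum_k\xi_k\otimes f_k$, the identity $(\id\otimes q)(w)=t_F$ forces $q(v(f))=f$, i.e. $q\circ v=\iota_F$. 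Conversely, every lifting $v:F\to\mathcal A$ of $\iota_F$ gives a lift $w=(\id_{F^*}\otimes v)(t_F)$ with $\|w\|_{\min}=\|v\|_{\cb}$. Thus
\[
\|t_F\|_E:=\inf_w\|w\|_{\min}=\inf\bigl\{\,\|v\|_{\cb}\ :\ v:F\to\mathcal A,\ q\circ v=\iota_F\,\bigr\}.
\]

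With this reformulation the factorization is immediate: for such $v$, put $G=v(F)$; then $v:F\to G$ is a completely bounded isomorphism whose inverse is $q|_G$, and $\|q|_G\|_{\cb}\le 1$ since $q$ is a $*$-homomorphism, so $d_{\cb}(F,G)\le\|v\|_{\cb}$. Taking the infimum over $v$ yields $\|t_F\|_{E,D}\ge\|t_F\|_E\ge d_{\cb}(F,\OS(\mathcal A))$. The step I expect to be the main obstacle is the upgrade from $\OS(\mathcal A)=\OS(\CE\ast\CF)$ to $\OS(\CF)$: a priori the optimal lift $v$ takes values in the full free product, and its image $G$ need not lie in the free factor $\CF=C^*\F_\infty$. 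Since $q|_{\CF}:\CF\to B$ is already surjective, the aim is to replace $v$ by a lifting $\tilde v:F\to\CF$ (equivalently, to correct $v$ modulo $\ker q$ into $\CF$) without increasing its completely bounded norm, which amounts to producing a completely contractive map $\mathcal A\to\CF$ compatible with $q$. This is precisely where the universal and local lifting properties of $C^*\F_\infty$, rather than soft duality, must be invoked, and it is the crux of the argument.
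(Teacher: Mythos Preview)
Your reduction to the min norm in the second paragraph is where the argument breaks. Once you replace $\|w\|_{max^D_1}$ by $\|w\|_{\min}$ you are bounding $\|t_F\|_E$ from below, not $\|t_F\|_{E,D}$, and $\|t_F\|_E$ can be strictly smaller than $d_{\cb}(F,\OS(\CF))$. Indeed, take $F=E$: the inclusion $E\hookrightarrow\CE\subset\CE\ast\CF$ is a completely isometric lift of $\iota_E$, so $\|t_E\|_E=1$, while the whole point of the lemma (used in part~(b) of Theorem~\ref{norms}) is that $d_{\cb}(E,\OS(\CF))$ may exceed $1$. So the ``upgrade from $\OS(\CE\ast\CF)$ to $\OS(\CF)$'' you flag is not an obstacle to be overcome later --- it is unreachable after you have dropped to $\|t_F\|_E$. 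Note also that the lifting property of $\CF$ concerns ucp maps \emph{from} $\CF$; it does not help you lift a map $F\to\CE\ast\CF$ into $\CF$.

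The paper's proof avoids this by exploiting the $max^D_1$ structure instead of discarding it. By definition $A\otimes_{max^D_1}(\CE\ast\CF)$ sits isometrically inside $\LL(\HH^u_D)\otimes_{\max}(\CE\ast\CF)$. Since $\CE\ast\CF$ is separable, there is a surjective $*$-homomorphism $\pi:\CF\to\CE\ast\CF$, and because the max tensor product respects quotients, $\LL(\HH^u_D)\otimes_{\max}\CF\to\LL(\HH^u_D)\otimes_{\max}(\CE\ast\CF)$ is a quotient map; hence a lift of $t_F$ of norm $\le\|t_F\|_{E,D}+\varepsilon$ can be pushed all the way up to $\LL(\HH^u_D)\otimes_{\max}\CF$ with the same norm bound. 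Only then does one pass to $\LL(\HH^u_D)\otimes_{\min}\CF$ and read off a cb map $\alpha:F\to\CF$ with $q\circ\pi\circ\alpha=\iota_F$, giving $d_{\cb}(F,\alpha(F))\le\|\alpha\|_{\cb}\le\|t_F\|_{E,D}+\varepsilon$ with $\alpha(F)\in\OS(\CF)$. The moral: postpone the passage to $\min$ until after you have lifted into $\CF$, using the exactness of $\otimes_{\max}$ for the lifting step.
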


\begin{proof}
 By their construction, we have the following diagram
 \[
\xymatrix{
A\underset{max^D_1}\otimes (\CE\ast\CF) \ar [d]_{q} \ar@ {^{(}->}[r] &  \LL(\HH^u_D) \underset{max}\otimes (\CE\ast\CF) & \LL(\HH^u_D) \underset{max}\otimes \CF\ar [l]_{\qquad{\pi}}\ar@ {^{(}->}[d]^{\iota} \ar@ {^{(}->}[d]^{\iota}  \\
A\underset{E,D}\otimes  B && \LL(\HH^u_D) \underset{min}\otimes \CF
}
\]

where $\pi$ is induced from a quotient map $\CF \to \CE\ast \CF$, and $\iota$ is a continuous inclusion. 

Note that for finite dimensional $F^* \subset A$ and $F\subset B$, we can lift $F$ to a subspace $G \subset \CE\ast\CF$, and then a 
$\tilde{G}\subset \CF$. Therefore the identity map $t_F$ on $F^*\otimes F \subset A\otimes_{E,D} B$ admits a lifting 
$\xi \in F^*\otimes \tilde{G}\subset \LL(\HH^u_D) \otimes_{min} \CF$ which corresponds to a map $\alpha: F\to \tilde{G}$. Hence we have a factorization
\[
 F \overset{\alpha} \longrightarrow \tilde{G} \overset{q\circ\pi}\longrightarrow B,
\]
such that the composition is the inclusion $F\subset B$. Therefore the image $\alpha(F)$ in $\tilde{G}$ is isomorphic to $F$. Hence we have
\[
\xymatrix{
 t_F: F\ar[r] & \alpha(F) \ar@ {^{(}->}[d]\ar[r] & F\\
 & \CF&
}
\]
and therefore, $\|t_F\|_{E,D} \geq d_{cb}(F, \OS(\CF))$.  
\end{proof}

The next lemma will help us distinguish $\|\cdot\|_{E,D}$ and $\|\cdot\|_{\max}$. 

\begin{lemma}\label{max}
 Let $A\subset \LL(\HH^u_D)$ be the universal representation of $A$. 
Also let $\pi$ be a surjective ucp from $B_1$ to $B$ with kernal $I$, such that 
$(A\otimes_{max^D_1} B_1) /(A\otimes_{max^D_1} I)   \simeq A\otimes_{max} B$.
If there exists a surjective completely positive map $\sigma: B \to A^{\op}$, 
then $A$ has the $D$WEP$_1$. 
\end{lemma}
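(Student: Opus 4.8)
The plan is to produce a weak expectation $\psi:\LL(\HH^u_D)\to A^{**}$ that restricts to the identity on $A$, since this is exactly the assertion that $A$ is \emph{r.w.i.} in $\LL(\HH^u_D)$, i.e. has the $D$WEP$_1$. Write $M=\LL(\HH^u_D)$. First I would realize $A^{\op}$ as a commutant: represent $A$ in the standard form of $A^{**}$ on a Hilbert space $L$, so that the weak closure of $A$ is $A''=A^{**}$ and the modular conjugation $J$ yields a $^*$-homomorphism $\rho:A^{\op}\to\BB(L)$ with image $JAJ\subseteq (A^{**})'=A'$ weakly dense in $A'$. Since $A$ and $\rho(A^{\op})$ commute, the multiplication map $\Theta:a\otimes b\longmapsto a\,\rho(b)$ is a $^*$-homomorphism on $A\otimes_{max}A^{\op}$, with $\Theta(1\otimes b)=\rho(b)$.

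The heart of the argument is to show that $\Theta$ is continuous for the $max^D_1$ norm, i.e. that it factors through $A\otimes_{max^D_1}A^{\op}$. Here I would use $\sigma$ together with the hypothesis. Because $\sigma$ is completely positive and $A\hookrightarrow\BB(L)$ has range commuting with $\rho(\sigma(B))\subseteq A'$, the product map $P:=\Theta\circ(\id_A\otimes\sigma):A\otimes_{max}B\to\BB(L)$ is a well-defined ucp (hence bounded) map. By the hypothesis $(A\otimes_{max^D_1}B_1)/(A\otimes_{max^D_1}I)\simeq A\otimes_{max}B$, the norms $\|\cdot\|_{max}$ and $\|\cdot\|_{D,E}$ coincide on $A\otimes B$, so $P$ is $\|\cdot\|_{D,E}$-continuous. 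Now I would use surjectivity of $\sigma$: for $z\in A\odot A^{\op}$ choose $y\in A\odot B$ with $(\id_A\otimes\sigma)(y)=z$, whence $\Theta(z)=P(y)$ and $\|\Theta(z)\|\le\inf_y\|y\|_{D,E}$ over all such lifts. Since $\sigma$ is surjective, the open mapping theorem gives $\sigma(\mathrm{ball}_B)\supseteq\delta\,\mathrm{ball}_{A^{\op}}$, which should make $\id_A\otimes\sigma$ a metric surjection of $A\otimes_{D,E}B$ onto $A\otimes_{max^D_1}A^{\op}$ up to the constant $\delta$, giving $\|\Theta(z)\|\le\delta^{-1}\|z\|_{max^D_1}$. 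A bounded $^*$-homomorphism on a dense $^*$-subalgebra extends to a $^*$-homomorphism of the completion, which is automatically contractive; thus $\Theta$ defines a ucp map $A\otimes_{max^D_1}A^{\op}\to\BB(L)$.

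Once $\Theta$ is $max^D_1$-continuous I would conclude by ``The Trick'', exactly as in the proof of Theorem ~\ref{tensor wep}. Since $A\otimes_{max^D_1}A^{\op}\subseteq M\otimes_{max}A^{\op}$ by the definition of $max^D_1$, Arveson's extension theorem extends $\Theta$ to a ucp map $\hat\Theta:M\otimes_{max}A^{\op}\to\BB(L)$. Set $\psi(T):=\hat\Theta(T\otimes 1)$ for $T\in M$; then $\psi$ is ucp and, taking $\sigma$ unital, $\psi|_A=\id_A$. Because $\hat\Theta$ restricts to the $^*$-homomorphism $\rho$ on $1\otimes A^{\op}$, the subalgebra $1\otimes A^{\op}$ lies in the multiplicative domain $\md(\hat\Theta)$, so for $T\in M$ and $b\in A^{\op}$
\[
\psi(T)\,\rho(b)=\hat\Theta(T\otimes b)=\rho(b)\,\psi(T).
\]
Hence $\psi(T)$ commutes with $\rho(A^{\op})$, and since $\rho(A^{\op})$ is weakly dense in $A'$ we get $\psi(T)\in A''=A^{**}$. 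Thus $\psi:M\to A^{**}$ is the desired weak expectation and $A$ has the $D$WEP$_1$.

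The step I expect to be the main obstacle is the $max^D_1$-boundedness of $\Theta$, specifically the claim that $\id_A\otimes\sigma$ is a (metric, up to a constant) surjection $A\otimes_{D,E}B\to A\otimes_{max^D_1}A^{\op}$. The delicate point is that $\sigma$ is only completely positive rather than a $^*$-homomorphism, so its behaviour under tensoring must be controlled through the quotient presentation $B\simeq(\CE\ast\CF)/I$ that defines $\|\cdot\|_{D,E}$; one must verify that a lift of $z$ through $\id_A\otimes\sigma$ can be chosen with $\|\cdot\|_{D,E}$-norm comparable to $\|z\|_{max^D_1}$, which is exactly where surjectivity of $\sigma$ is essential. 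A secondary technical point is the unitality of $\sigma$ needed for $\psi|_A=\id_A$; if $\sigma$ is not unital one would instead pass to its multiplicative domain and invoke Lemma ~\ref{ki md}.
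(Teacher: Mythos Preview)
Your overall architecture is right, and your step 3 (Arveson extension followed by the Trick, with $1\otimes A^{\op}$ sitting in the multiplicative domain because $\rho$ is a $^*$-homomorphism) is exactly how the conclusion should be reached. The problem is the detour through step 2, which you correctly flag as the main obstacle --- and which is in fact a genuine gap rather than a technicality.

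The metric-surjection claim does not follow. The open mapping theorem gives $\sigma(\mathrm{ball}_B)\supseteq\delta\,\mathrm{ball}_{A^{\op}}$, but tensoring a surjective \emph{completely positive} map with $\id_A$ does not produce a quotient map between C$^*$-tensor products; that conclusion is available for surjective $^*$-homomorphisms, not for ucp maps. So from $\|\Theta(z)\|\le\inf_y\|y\|_{D,E}$ you cannot pass to $\|\Theta(z)\|\le\delta^{-1}\|z\|_{max^D_1}$, and there is no reason for $\Theta$ to be bounded in the $max^D_1$ norm on $A\otimes A^{\op}$.

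The paper simply avoids this step. Rather than first trying to control $\Theta$ on $A\otimes_{max^D_1}A^{\op}$ and only then embedding into $M\otimes_{max}A^{\op}$, it applies Arveson one level earlier. The composition
\[
A\underset{max^D_1}\otimes B_1 \longrightarrow A\underset{max}\otimes B \xrightarrow{\ \id\otimes\sigma\ } A\underset{max}\otimes A^{\op} \longrightarrow \BB(L_2(A^{**}))
\]
is ucp by construction (the first arrow uses the quotient hypothesis, the middle one is $\id\otimes\sigma$, and the last is the commuting-pair representation), and by the very definition of $max^D_1$ the domain sits isometrically inside $M\otimes_{max}B_1$. One then extends by Arveson to $\Phi:M\otimes_{max}B_1\to\BB(L_2(A^{**}))$ and runs the Trick with second tensor factor $B_1$ instead of $A^{\op}$. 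No surjectivity-with-norm-control for $\id_A\otimes\sigma$ is ever needed; the hypothesis on the quotient is used exactly once, to make the first arrow contractive. Your ``main obstacle'' is thus an artifact of routing through $A\otimes_{max^D_1}A^{\op}$; drop step 2 and work directly on $B_1$.
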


\begin{proof}
 From the assumptions, we have the following diagram
  \[
\xymatrix{
 A \underset{max^D_1} \otimes B_1 \ar@ {^{(}->}[d]\ar [r]& A\underset{max}\otimes B \ar [r] & A\underset{max}\otimes A^{\op} \ar [r] & \BB(L_2(A^{**}))\\
 \LL(\HH^u_D)\otimes_{max} B_1\ar [urrr]
}
\]
By Arveson's extension theorem, there exists a ucp map $\Phi: \LL(\HH^u_D)\otimes_{max} B_1 \to \BB(L_2(A^{**}))$. 
Applying \emph{the Trick}, we obtain a ucp map $\phi: \LL(\HH^u_D) \to A^{**}$, which is identity on $A$, and hence 
$A$ has the $D$WEP$_1$. 
\end{proof}

Now we are ready to give the conditions which distinguish the norms. 

\begin{theorem}\label{norms}
 For the continuous inclusions 
\[
 A\underset{min}\otimes B \underset{(a)}\supseteq A\underset{E}\otimes B \underset{(b)}\supseteq A\underset{D,E}\otimes B \underset{(c)}\supseteq A\underset{max}\otimes B, 
\]
the strict inclusion holds for
\begin{enumerate}
 \item[(a),] if there exists $n$-dimensional subspaces $F^*\subset A$ and $F\subset B$, such that 
$F\not\in \OS(\CE\ast\CF)$;
 \item[(b),] if $E\not\in \OS(\CF)$;
 \item[(c),] if there exists a surjective ucp $B\to A^{\op}$, and $A$ does not have the $D$WEP$_1$.
\end{enumerate}
Moreover, for $n$-dimensional subspaces $E$, $F$ in $B$, and $E^*$, $F^*$ in $A$, 
we have $A\otimes_{D,E} B \neq A\otimes_F B$, if $F\not\in \OS(\CF)$. Therefore $A\otimes_{D,E} B$ gives us a new norm on $A\otimes B$, 
distinct from the continuum norms constructed in \cite{OP14}. 
\end{theorem}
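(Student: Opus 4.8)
The plan is to detect each of the three strict inclusions, and the final separation from the \cite{OP14} continuum, by evaluating the two relevant norms on a single explicit ``identity tensor'' $t_F$ (or $t_E$), whose minimal norm is always $1$. All four comparisons then reduce to the lower bounds already in hand: Lemma \ref{cbfact} for the $max^D_1$-norm, its min-based analogue from \cite{OP14}, and Lemma \ref{max} for the comparison with $\otimes_{\max}$. I will read the non-membership hypotheses quantitatively: since the free products $\CE\ast\CF$ and $\CF$ are separable, the sets $\OS(\CE\ast\CF)$ and $\OS(\CF)$ are separable subsets of the non-separable space $\OS$ (for $n\geq 3$), so the hypotheses $F\notin\OS(\CE\ast\CF)$, $E\notin\OS(\CF)$, $F\notin\OS(\CF)$ are to be understood as $d_{cb}(\,\cdot\,,\OS(\,\cdot\,))>1$.

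For (a) I would run the proof of Lemma \ref{cbfact} verbatim with $\otimes_{\min}$ in place of $\otimes_{max^D_1}$, which is exactly the corresponding step in \cite{OP14}: lifting $t_F$ through the quotient $A\otimes_{\min}(\CE\ast\CF)\to A\otimes_E B$ yields a factorization $F\to G\to B$ with $G\subseteq\CE\ast\CF$, hence $\|t_F\|_E\geq d_{cb}(F,\OS(\CE\ast\CF))$. Since $\|t_F\|_{\min}=1$, the hypothesis on $F$ gives $\|t_F\|_E>1$ and inclusion (a) is strict. For (b) I would use that $E$ is built into the construction: as $E\subseteq\CE\subseteq\CE\ast\CF$ completely isometrically and $q$ restricts to the inclusion $\CE\hookrightarrow B$, the tensor $t_E$ lifts to $\hat t_E\in E^*\otimes E\subseteq A\otimes_{\min}(\CE\ast\CF)$ with $\|\hat t_E\|_{\min}=1$, so $\|t_E\|_E\leq 1$ and therefore $\|t_E\|_E=1$. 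On the other hand Lemma \ref{cbfact} applied to the subspace $E$ gives $\|t_E\|_{E,D}\geq d_{cb}(E,\OS(\CF))>1$, so $\|t_E\|_E<\|t_E\|_{E,D}$ and inclusion (b) is strict.

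Part (c) is the contrapositive of Lemma \ref{max}: taking $B_1=\CE\ast\CF$, $\pi=q$ and $I=\ker q$, the quotient $(A\otimes_{max^D_1}B_1)/(A\otimes_{max^D_1}I)$ is exactly $A\otimes_{D,E}B$; were it to coincide with $A\otimes_{\max}B$, the hypothesized surjective ucp $B\to A^{\op}$ would force $A$ to have the $D$WEP$_1$, contrary to assumption, so inclusion (c) is strict. Finally, for the ``moreover'' clause I would compare $A\otimes_{D,E}B$ with the \cite{OP14} norm $A\otimes_F B$ by testing on $t_F$: the lifting argument of (b), now applied with $C^*\langle F\rangle\ast\CF$ in place of $\CE\ast\CF$, gives $\|t_F\|_F=1$, while Lemma \ref{cbfact} gives $\|t_F\|_{E,D}\geq d_{cb}(F,\OS(\CF))>1$ whenever $F\notin\OS(\CF)$. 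Hence the two norms differ on $t_F$, and since every norm in the continuum of \cite{OP14} is of the form $\|\cdot\|_F$ with such an $F$, our norm is distinct from all of them.

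The steps (a) and (c) are essentially immediate from the cited lemmas, so the delicate points are concentrated in (b) and the ``moreover'' clause. The main obstacle is verifying that the ``built-in'' liftings are genuinely \emph{completely} isometric, so that one really obtains $\|t_E\|_E=1$ and $\|t_F\|_F=1$ rather than merely a bound, together with justifying the quantitative reading $d_{cb}(\,\cdot\,,\OS(C))>1$ of non-membership from the separability of $\OS(C)$ against the non-separability of $\OS$; once these are in place, the tensor-norm inequalities are routine applications of Lemmas \ref{cbfact} and \ref{max}.
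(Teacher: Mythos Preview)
Your proposal is correct and follows essentially the same route as the paper: test each inclusion on the identity tensor $t_E$ or $t_F$, use the lifting argument (the $\min$-analogue of Lemma~\ref{cbfact}) for (a), Lemma~\ref{cbfact} together with the built-in completely isometric lifting $E\hookrightarrow\CE\subset\CE\ast\CF$ for (b) and the ``moreover'' clause, and the contrapositive of Lemma~\ref{max} for (c). Your explicit justification that $\|t_E\|_E=1$ (via the canonical lift $\hat t_E\in E^*\otimes E\subset A\otimes_{\min}(\CE\ast\CF)$) and your reading of $F\notin\OS(C)$ as $d_{cb}(F,\OS(C))>1$ are exactly the points the paper uses without spelling out, so the ``obstacles'' you flag are real but minor and are implicitly handled the same way in the original.
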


\begin{proof}
 (a) is proved in \cite{OP14}. Indeed, if such $F$ and $F^*$ exist, then the identity map $t_F$ on 
$F^* \otimes F \subset A\otimes_{min} B$ has norm $1$. On the other hand, notice that the norm of
$t_F$ in $A\otimes_{E} B$ is greater than $1$. Indeed if $\|t_F\|_{E} = 1$, then by the construction of $A\otimes_{E} B$, 
it lifts to an element $\xi \in F^* \otimes (\CE\ast \CF)$ with $\|\xi\|_{\min} = 1$. This corresponds to 
a completely isometric mapping $F \to \CE\ast\CF$, showing that $F$ is completely isometric to a subspace of $\CE\ast\CF$, which contradicts the condition 
$F\not\in \OS(\CE\ast\CF)$. Hence $\|t_F\|_E > \|t_F\|_{\min} = 1$. 

 (b) By Lemma ~\ref{cbfact}, $\|t_E\|_{E,D} \geq d_{\cb}(E, \OS(\CF))$. If $E\not\in \OS(\CF)$, then we have $d_{\cb}(E, \OS(\CF)) > 1$, 
and so is $\|t_E\|_{E,D}$. Therefore $\|t_E\|_{E,D} > \|t_E\|_{E} = 1$. 

 (c) Apply Lemma ~\ref{max} to $B_1 = \CE\ast\CF$. Then by the construction we have 
$A\otimes_{D,E} B = (A\otimes_{max^D_1} B_1) /(A\otimes_{max^D_1} I)$. If $A\otimes_{D,E} B = A\otimes_{max} B$, 
then by Lemma ~\ref{max}, $A$ has the $D$WEP$_1$, which contradicts the condition. 

 Moreover, similar to the proof of (b), Lemma ~\ref{cbfact} shows that $A\otimes_{D,E} B \neq A\otimes_F B$, 
if $F\not\in \OS(\CF)$. 
\end{proof}

Now we will construct C$^*$-algebras $A$ and $B$, to give a concrete example with above distinct norms. Our goal 
is to construct a C$^*$-algebra $A$ such that $A\simeq A^{\op}$ without $D$WEP$_1$, and let $B = A$. 

Recall that for operator spaces $E$ and $F$, $\CE\ast C^*\langle F\rangle \simeq C^*\langle E\otimes_{h} F\rangle$, 
where $E\otimes_{h} F$ is the Haagerup tensor product, and also that  $C^*\langle E^{\op}\rangle \simeq \CE^{\op}$.

\begin{lemma}\label{haar}
 Let $C = C^*\langle E\otimes_{h} E^{\op}\rangle$. Then $C\simeq C^{\op}$. 
\end{lemma}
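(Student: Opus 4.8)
The plan is to turn $C$ into a free product using the two structural facts recalled just before the lemma, and then to use that taking opposites is a symmetric functorial operation on free products. By the recalled identity $\CE\ast C^*\langle F\rangle\simeq C^*\langle E\otimes_h F\rangle$ applied with $F=E^{\op}$, together with $C^*\langle E^{\op}\rangle\simeq\CE^{\op}$, one has
\[
C=C^*\langle E\otimes_h E^{\op}\rangle\simeq \CE\ast C^*\langle E^{\op}\rangle\simeq \CE\ast\CE^{\op}.
\]
So it suffices to show that $\CE\ast\CE^{\op}$ is isomorphic to its own opposite.

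For this I would use two elementary properties of the unital universal free product of C$^*$-algebras: it is symmetric in its factors, $A\ast B\simeq B\ast A$, and it commutes with the opposite, $(A\ast B)^{\op}\simeq A^{\op}\ast B^{\op}$. The latter holds because $A\mapsto A^{\op}$ is an anti-isomorphism that carries the universal property defining $A\ast B$ to the one defining $A^{\op}\ast B^{\op}$. Combining these with $(\CE^{\op})^{\op}=\CE$ gives
\[
C^{\op}\simeq(\CE\ast\CE^{\op})^{\op}\simeq\CE^{\op}\ast(\CE^{\op})^{\op}=\CE^{\op}\ast\CE\simeq\CE\ast\CE^{\op}\simeq C,
\]
which is the claim.

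The individual steps are routine once the two free-product properties are granted, so the only point that demands care is the commutation $(A\ast B)^{\op}\simeq A^{\op}\ast B^{\op}$; I expect this to be the main (if minor) obstacle, and I would settle it by comparing universal properties rather than by manipulating elements. As an internal consistency check --- and the conceptual reason the lemma holds --- one can instead argue at the operator-space level: the flip $x\otimes y\mapsto y\otimes x$ is a complete isometry $(E\otimes_h F)^{\op}\simeq F^{\op}\otimes_h E^{\op}$, so with $F=E^{\op}$ the space $E\otimes_h E^{\op}$ is completely isometrically self-opposite, and applying $C^*\langle\,\cdot\,\rangle$ (which respects complete isometries, and respects opposites via $C^*\langle E^{\op}\rangle\simeq\CE^{\op}$) reproduces $C^{\op}\simeq C$ directly. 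This second route is exactly why the second tensor factor was taken to be $E^{\op}$.
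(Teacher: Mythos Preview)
Your proof is correct. Both you and the paper reduce to showing $\CE\ast\CE^{\op}\simeq(\CE\ast\CE^{\op})^{\op}$, but the arguments differ in style. The paper proceeds concretely: it lifts everything to free groups via a surjection $\CF\ast\CF^{\op}\to\CE\ast\CE^{\op}$, uses the explicit generator-level isomorphism $\CF\ast\CF^{\op}\simeq C^*\F_{I\times I}\simeq(C^*\F_{I\times I})^{\op}$, and then pushes the resulting map down, writing the isomorphism explicitly as $x\ast1\mapsto(1\ast x^{\op})^{\op}$ and $1\ast y^{\op}\mapsto(y\ast1)^{\op}$. Your route bypasses the free-group detour by invoking directly the two functorial identities $A\ast B\simeq B\ast A$ and $(A\ast B)^{\op}\simeq A^{\op}\ast B^{\op}$, which is cleaner and makes transparent that the result does not depend on the particular $E$. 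Your alternative argument via the Haagerup flip $(E\otimes_h F)^{\op}\simeq F^{\op}\otimes_h E^{\op}$ is also valid and nicely explains why the second factor is chosen to be $E^{\op}$. The paper's approach, by contrast, has the advantage of exhibiting the isomorphism on generators, so one can verify by hand that it descends from the free-group level.
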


\begin{proof}
 Let $\pi: \CF \to \CE$ be the quotient map, then so is $\pi^{\op}: \CF^{\op} \to \CE^{\op}$. Then we have a quotient map 
$\CF \ast \CF^{\op} \to \CE \ast \CE^{\op}$, which maps the unitaries to unitaries. Notice that 
for $I$ the index set of $\F_{\infty}$, we have the following isomorphism given by
\begin{align*}
 \CF \ast \CF^{\op}  && \simeq && C^*\F_{I\times I}  &&\simeq && (C^*\F_{I\times I})^{\op} \\
  g_i \ast 1  &&\longmapsto && 1\times g_i^{-1}  &&\longmapsto && (1\times g_i)^{\op} \\
  1 \ast h_i  && \longmapsto && h_i^{-1}\times 1  &&\longmapsto && (h_i\times 1)^{\op} \\
\end{align*}

Let $\pi(g_i) = x$ and $\pi^{\op}(h_i) = y^{\op}$. Define the map $\CE \ast \CE^{\op} \to (\CE \ast \CE^{\op})^{\op}$, 
by $x\ast 1 \to (1\ast x^{\op})^{\op}$, and $1\ast y^{\op} \to (y\ast 1)^{\op}$. Then it is easy to check
that this is an isomorphism following from the isomorphism $\CF \ast \CF^{\op} \to (C^*\F_{I\times I})^{\op}$.
\end{proof}

Now we are ready to construct the example. For $n$-dimensional operator spaces $E$ and $F$ satisfying 
the conditions (a) and (b) in Theorem ~\ref{norms}, let 
\[
 D = C^*\langle (E\oplus E^* \oplus F \oplus F^*) \otimes_{h}(E\oplus E^* \oplus F \oplus F^*)^{\op}\rangle, 
\]
where the direct sum is in $\ell_{\infty}$. Then by Lemma \ref{haar}, $D\simeq D^{\op}$. 

Let $A = D \otimes_{min} C^*_{\lambda}\mathbb{F}_2$. Then we have
\[
A^{\op} = (D \underset{min}\otimes C^*_{\lambda}\mathbb{F}_2)^{\op} \simeq D^{\op}\underset{min}\otimes C^*_{\lambda}\mathbb{F}_2^{\op} \simeq 
D \underset{min}\otimes C^*_{\lambda}\mathbb{F}_2 = A.
\]
Let $B = A$, and hence we have a surjective ucp $B\to A^{\op}$. Also since $C^*_{\lambda}\mathbb{F}_2$ does not have the WEP, 
the faithful representation for $C^*_{\lambda}\mathbb{F}_2 \subset \BB(\HH)$ induces an inclusion 
$A = D\otimes_{min} C^*_{\lambda}\mathbb{F}_2\hookrightarrow D\otimes_{min} \BB(\HH)$ which is not \emph{r.w.i.}. Notice that this is not equivalent to $D$WEP. 
However if we construct the $max^D$ norm from the inclusion $A\subseteq D\otimes_{min} \BB(\HH)$ instead of $A\subseteq \LL(\HH^u_D)$, 
we will have the same conclusion that the four norms are distinct.  

\begin{corollary}
 Let $D$ be as above, and $A= D \otimes_{min} C^*_{\lambda}\mathbb{F}_2$. For a faithful representation $C^*_{\lambda}\mathbb{F}_2 \subset \BB(\HH)$, 
define the $max^D$ norm on $A\otimes C$ to be the tensor norm induced from the inclusion 
$A\otimes C \subseteq (D\otimes_{min} \BB(\HH)) \otimes_{max} C$. Let $B=A$. Define the quotient norms 
$A\otimes_E B$ as in ~(\ref{enorm}) and $A\otimes_{D,E}B$ as in ~(\ref{ednorm}) with the new $max^D$ norm as follows
\[
 A\underset{D,E}\otimes B = \displaystyle\frac{A\otimes_{max^D} (\CE \ast \CF)}{A\otimes_{max^D} I}.
\]
Then we have the following strict inclusions 
\[
 A\underset{min}\otimes B \supset A\underset{E}\otimes B \supset A\underset{D,E}\otimes B \supset A\underset{max}\otimes B. 
\]   
\end{corollary}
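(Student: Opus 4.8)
The plan is to re-run the proof of Theorem~\ref{norms} verbatim, with the single modification that the universal inclusion $A\subseteq\LL(\HH^u_D)$ is replaced by the concrete faithful inclusion $A=D\otimes_{min}C^*_{\lambda}\F_2\subseteq D\otimes_{min}\BB(\HH)$ that defines the new $max^D$ norm. First I would record that $A=B$ contains completely isometric copies of $E$, $E^*$, $F$, $F^*$, as required by conditions (a) and (b). Indeed, writing $G=E\oplus E^*\oplus F\oplus F^*$, the identity $\CE\ast C^*\langle G^{\op}\rangle\simeq C^*\langle G\otimes_h G^{\op}\rangle$ (recalled before Lemma~\ref{haar}) realizes $D=C^*\langle G\otimes_h G^{\op}\rangle$ as a free product whose factor $C^*\langle G\rangle$ contains $G$ completely isometrically; composing with $D\hookrightarrow A$, $d\mapsto d\otimes 1$, places all four subspaces inside $A=B$. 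Inclusion (a) then involves no reference to $D$ whatsoever, so it is literally Theorem~\ref{norms}(a): since $F\notin\OS(\CE\ast\CF)$ by the choice of $E,F$, a norm-one lift of $t_F$ would exhibit $F$ completely isometrically inside $\CE\ast\CF$, a contradiction, whence $\|t_F\|_E>\|t_F\|_{min}=1$ and $A\otimes_{min}B\supsetneq A\otimes_E B$.

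For inclusion (b), I would reprove Lemma~\ref{cbfact} word for word with $\LL(\HH^u_D)$ replaced by $D\otimes_{min}\BB(\HH)$. The only structural inputs to that lemma are the isometric inclusion $A\subseteq D\otimes_{min}\BB(\HH)$ and the continuous max-to-min map $(D\otimes_{min}\BB(\HH))\otimes_{max}\CF\to(D\otimes_{min}\BB(\HH))\otimes_{min}\CF$, both of which persist for the new norm. Lifting $t_E$ first to $\CE\ast\CF$ and then to $\CF$ gives a factorization of the inclusion $E\hookrightarrow B$ through a subspace of $\CF$, so that $\|t_E\|_{E,D}\geq d_{\cb}(E,\OS(\CF))$. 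Since $E\notin\OS(\CF)$ forces $d_{\cb}(E,\OS(\CF))>1$, we obtain $\|t_E\|_{E,D}>\|t_E\|_E=1$ and hence $A\otimes_E B\supsetneq A\otimes_{D,E}B$.

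For inclusion (c) I would argue by contradiction via the analog of Lemma~\ref{max}. Suppose $A\otimes_{D,E}B=A\otimes_{max}B$. With $B_1=\CE\ast\CF$ and $I=\ker q$, this is exactly the hypothesis $(A\otimes_{max^D}B_1)/(A\otimes_{max^D}I)\simeq A\otimes_{max}B$ of the (modified) Lemma~\ref{max}, and we do have the required surjective ucp map $B=A\to A^{\op}$ coming from the isomorphism $A\simeq A^{\op}$ (which uses $D\simeq D^{\op}$ from Lemma~\ref{haar} and $(C^*_{\lambda}\F_2)^{\op}\simeq C^*_{\lambda}\F_2$). Composing the faithful representation $A\otimes_{max}A^{\op}\to\BB(L_2(A^{**}))$ with the assumed equality and extending by Arveson's theorem gives a ucp map $(D\otimes_{min}\BB(\HH))\otimes_{max}B_1\to\BB(L_2(A^{**}))$, and \emph{the Trick} then produces a ucp map $\phi:D\otimes_{min}\BB(\HH)\to A^{**}$ restricting to $\id_A$. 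Thus the assumption would force $A$ to be relatively weakly injective in $D\otimes_{min}\BB(\HH)$.

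The crux is to rule this out, i.e.\ to show $A=D\otimes_{min}C^*_{\lambda}\F_2$ is \emph{not} r.w.i.\ in $D\otimes_{min}\BB(\HH)$. I would do this by slicing: given a putative ucp map $\psi:D\otimes_{min}\BB(\HH)\to A^{**}$ with $\psi|_A=\id$, fix a state $\omega$ on $D$ and let $S:A^{**}\to(C^*_{\lambda}\F_2)^{**}$ be the bidual of the slice map $\omega\otimes\id:A\to C^*_{\lambda}\F_2$. For $x\in C^*_{\lambda}\F_2\subseteq\BB(\HH)$ we have $1_D\otimes x\in A$, so $S\big(\psi(1_D\otimes x)\big)=S(1_D\otimes x)=x$; hence $S\circ\psi\circ(1_D\otimes\,\cdot\,)$ is a ucp map $\BB(\HH)\to(C^*_{\lambda}\F_2)^{**}$ that is the identity on $C^*_{\lambda}\F_2$. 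This would make $C^*_{\lambda}\F_2$ r.w.i.\ in $\BB(\HH)$, i.e.\ WEP by Corollary~\ref{wepwri}, contradicting the non-amenability of $\F_2$. This contradiction yields the strict inclusion $A\otimes_{D,E}B\supsetneq A\otimes_{max}B$, completing the chain. I expect this final step to be the main obstacle, precisely because it is where the concrete norm departs from the universal one and where, as the surrounding text warns, r.w.i.\ in $D\otimes_{min}\BB(\HH)$ ``is not equivalent to $D$WEP''; in particular one must check that the slice map passes correctly to the bidual and lands compatibly in $(C^*_{\lambda}\F_2)^{**}$.
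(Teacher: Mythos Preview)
Your proposal is correct and follows the paper's own (largely implicit) argument: the paper gives no formal proof of this corollary, but the paragraph preceding it asserts exactly the three ingredients you verify, namely that conditions (a) and (b) of Theorem~\ref{norms} carry over unchanged, and that the inclusion $D\otimes_{min}C^*_{\lambda}\F_2\hookrightarrow D\otimes_{min}\BB(\HH)$ fails to be \emph{r.w.i.} because $C^*_{\lambda}\F_2$ lacks the WEP. Your slice-map argument is precisely the natural way to unpack that last assertion, so you are simply supplying the details the paper omits rather than taking a different route.
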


\begin{frame}

\end{frame}

\end{document}